\newcolumntype{C}{>{$}c<{$}}
\newcolumntype{L}{>{$}l<{$}}
\definecolor{myorange}{RGB}{235,140,30}
\definecolor{mygreen}{RGB}{28,172,0} 
\definecolor{mylilas}{RGB}{170,55,241}
\definecolor{azulescuro}{RGB}{0,0,102}
\theoremstyle{plain}
\newtheorem{thm}{Theorem}[section]
\newtheorem{lemma}[thm]{Lemma}
\newtheorem{proposition}[thm]{Proposition}
\newtheorem{corollary}[thm]{Corollary}
\theoremstyle{definition}
\newtheorem{remark}{Remark}[section]
\numberwithin{equation}{section}
\newcommand{\ds}{\displaystyle}
\newcommand{\dint}{\ds\int}
\newcommand{\dsum}{\ds\sum}
\newcommand{\eqskip}{ \vspace*{2mm}\\ }
\newcommand{\R}{\mathbb{R}}
\newcommand{\N}{\mathbb{N}}
\newcommand{\res}{\mathop{\mathrm{Res}} }
\newcommand{\fr}[2]{\frac{\ds #1}{\ds #2}}
\newcommand{\re}{{\rm Re}}
\newcommand{\sn}[1]{\mathbb{S}^{#1}}
\newcommand{\z}[1]{\zeta_{\sn{n}}(#1)}
\newcommand{\zs}[2]{\zeta_{\sn{n}}(#1,#2)}
\newcommand{\zdim}[2]{\zeta_{\sn{#2}}(#1)}
\newcommand{\zsdim}[3]{\zeta_{\sn{#3}}(#1,#2)}
\newcommand{\zder}[1]{\zeta'_{\sn{n}}(#1)}
\newcommand{\zderdim}[2]{\zeta'_{\sn{#2}}(#1)}
\newcommand{\zsder}[2]{\zeta'_{\sn{n}}(#1,#2)}
\newcommand{\zsderdim}[3]{\zeta'_{\sn{#3}}(#1,#2)}
\newcommand{\spec}[2]{\omega_{#1 , #2}}
\newcommand{\specshift}[2]{\lambda_{#1 , #2}}
\newcommand{\lambdas}[1]{\lambda_{#1}}
\newcommand{\snh}[1]{\mathbb{S}^{#1}_+}
\newcommand{\zsh}[2]{\zeta_{\snh{n}}(#1,#2)}
\newcommand{\zsdimh}[3]{\zeta_{\snh{#3}}(#1,#2)}
\newcommand{\zsderh}[2]{\zeta'_{\snh{n}}(#1,#2)}
\newcommand{\zderdimh}[2]{\zeta'_{\snh{#2}}(#1)}
\newcommand{\zshderdim}[3]{\zeta'_{\snh{#3}}(#1,#2)}
\newcommand{\zsrp}[2]{\zeta_{\snrp{n}}(#1,#2)}
\newcommand{\snrp}[1]{\mathbb{RP}^{#1}}
\newcommand{\zsdimrp}[3]{\zeta_{\snrp{#3}}(#1,#2)}
\newcommand{\zderdimrp}[2]{\zeta'_{\snrp{#2}}(#1)}
\newcommand{\Hh}[1]{H^{#1}}
\newcommand{\zH}[2]{\zeta_{\Hh{#2}}(#1)}
\newcommand{\norm}[1]{\left\lVert#1\right\rVert}
\newcommand{\abs}[1]{\left\lvert#1\right\rvert}
\patchcmd{\section}{\scshape}{\bfseries}{}{}
\renewcommand{\@secnumfont}{\bfseries}
\begin{document}

\title{Recurrence formulae for spectral determinants}

\author[J. Cunha]{Jos\'{e} Cunha}
\author[P. Freitas]{Pedro Freitas}

\address{Departamento de Matem\'{a}tica, Instituto Superior T\'{e}cnico, Universidade de Lisboa, Av. Rovisco Pais, 1049-001 Lisboa, Portugal}
\email{jose.d.a.cunha@tecnico.ulisboa.pt}
\address{Departamento de Matem\'{a}tica, Instituto Superior T\'{e}cnico, Universidade de Lisboa, Av. Rovisco Pais, 1049-001 Lisboa, Portugal
	\& Grupo de F\'{\i}sica Matem\'{a}tica, Faculdade de Ci\^{e}ncias, Universidade de Lisboa, Campo Grande, Edif\'{\i}cio C6,
	1749-016 Lisboa, Portugal}
\email{pedrodefreitas@tecnico.ulisboa.pt}

\date{\today}

\begin{abstract} 
	We develop a unified method to study spectral determinants for several different manifolds, including spheres and 
	hemispheres, and projective spaces. This is a direct consequence of an approach based on deriving recursion relations for the 
	corresponding zeta functions, which we are then able to solve explicitly. Apart from new applications such as
	hemispheres, we also believe that the resulting formulae in the cases for which expressions for the determinant
	were already known are simpler and easier to compute in general, when compared to those resulting from other approaches.
\end{abstract}
\keywords{Laplace--Beltrami operator; eigenvalues;  spectral zeta function; spectral determinant, Stirling numbers of the first kind; central factorial numbers}
 \subjclass[2020]{\text{Primary: 58J50, 58J52; Secondary: 05A10, 11B37, 11B73, 11M41}}
\maketitle
\tableofcontents 

\section{Introduction}

In the last few decades the problem of evaluating the determinant of the Laplace operator on Riemannian manifolds has received considerable attention in the literature.
These calculations may be traced back to the work of Minakshisundaram and Pleijel~\cite{minaplei}, Seeley~\cite{seel}, and Ray and Singer~\cite{raysing}, and they are
based on a regularization procedure via an associated zeta function. Techniques related to zeta function regularization gained some momentum within the mathematical physics
community after Dowker and Critchley's~\cite{dowkcrit}, and Hawking's~\cite{hawk} papers from 1976 and 1977, respectively, described possible applications to physics~--~for
a more complete historical account, see~\cite{eliz}, for instance; see also~\cite{gelfyagl}. 

Briefly, given an elliptic differential operator $\mathfrak{T}$ defined on a compact manifold (with or without boundary) with discrete spectrum
$\lambda_{1}\leq \lambda_{2} \leq \lambda_{2}\leq \dots$, where in the case without boundary we leave out the zero eigenvalue, we define the spectral
zeta function associated with $\mathfrak{T}$ by
\[
 \zeta_{\mathfrak{T},M}(s)=\sum_{k=1}^{\infty}\left(\lambdas{k}\right)^{-s},
\]
which converges on some half--plane $\Re(s)>\mu$. Under certain conditions, which will be satisfied by the Laplace--Beltrami operator on a compact manifold $M$ (with or
without boundary), for instance, the function $\zeta_{\mathfrak{T},M}$ may be continued analytically to a meromorphic function on the whole of the complex plane, while being analytic
at zero. We now see that, if it were possible to differentiate the original series with respect to $s$ at zero we would obtain, formally,
\[
 \zeta_{\mathfrak{T},M}'(0)= -\sum_{k=1}^{\infty}\log\left(\lambda_{k}\right),
\]
and thus
\begin{equation}\label{infprod}
 e^{-\zeta_{\mathfrak{T},M}'(0)} = \prod_{k=1}^{\infty} \lambda_{k}.
\end{equation}
Clearly this procedure is not justified as such, but it suggests that we may use the analytic continuation of $\zeta_{\mathfrak{T},M}$ to the whole complex plane
as a meromorphic function to define the determinant of the operator $\mathfrak{T}$ as
\begin{equation}\label{def:det:exp}
	\det(\mathfrak{T},M) := e^{-\zeta'_{\mathfrak{T},M}(0)},
\end{equation}
where we now use the expression $\zeta_{\mathfrak{T},M}$ to denote this meromorphic function. Also, whenever there is no ambiguity, we will omit the operator 
both in the index of the zeta function and in the determinant as in~\eqref{def:det:exp}.

Formally, the above definition of $\det(\mathfrak{T},M)$ is the product of the nonzero eigenvalues of the operator $\mathfrak{T}$ acting
on $M$, and is, in fact, a natural extension to the infinite dimensional setting of a formula that is valid in finite dimensions. While this allows us to make
sense of the infinite product appearing in~\eqref{infprod}, the expression on the right-hand side of~\eqref{def:det:exp} will not, in general, admit
a straightforward evaluation. In spite of this, there are several manifolds where this has been done, and for which the determinant has been calculated explicitly.
One such example which will be relevant for us are the $n-$spheres $\mathbb{S}^{n}$ with the standard metric, which have, in fact, deserved
the attention of many authors across a span of more than thirty years, beginning with the work of Vardi in 1988~\cite{vardi}, and followed by several
others such as Voros~\cite{voros}, Quine and Choi~\cite{quine2}, Kumagai~\cite{kumagai}, Quine, Heydari and Song~\cite{quine1}, Choi and
Srivastava~\cite{choisri1,choisri2}, Awonusika~\cite{Awonusika} and Halji~\cite{Halji}, with the last two papers dating from $2020$.

As a first simple example, consider the case of the circle $\mathbb{S}^{1}$, for which, leaving out the zero eigenvalue, the spectrum is given
by $\lambda_{1,k}=k^2$ for $k=1,2\dots$, with multiplicities $m_k^1=2$. This yields an associated zeta function given by 
\begin{equation*}
	\zdim{s}{1}=\sum_{k=1}^{\infty}\dfrac{2}{k^{2s}}=2\zeta(2s),
\end{equation*}  
where $\zeta(s)$ denotes the Riemann zeta function. Proceeding as described above and taking into consideration that $\zeta'(0) =-\frac{1}{2} \log (2 \pi )$,
we obtain $\det(\Delta,\sn{1})=4\pi^2$. It is possible to continue in a similar way for higher-dimensional spheres, but not only do the resulting expressions
become more involved as $n$ increases, it also becomes clear that in order to find a general formula it will be necessary to make the dependence on the dimension
more explicit -- for illustration, see the expressions for the determinant for dimensions up to $9$, given in Corollary~\ref{lowdimSn}. This has led the authors
of the papers quoted above to try out different procedures to obtain closed-form formulae valid for all $n$, with the first explicit results having
been obtained by Quine and Choi~\cite{quine2}, and Kumagai~\cite{kumagai}. Finally, we also note that spheres with the standard metric play an important role as
critical and extremal metrics for the determinant of the Laplacian among certain classes of metrics~\cite{okik,osgood}.

The purpose of the present paper is to provide a unified approach allowing us not only to compute the determinant of $\mathbb{S}^n$ but also of other manifolds
such as hemispheres and projective spaces. This is based on shifting the original eigenvalue sequence in order to obtain a \emph{suitable} eigenvalue sequence leading
to an appropriate zeta function to which we can then apply the method developed by the first author for the case of the quantum harmonic oscillator~\cite{frei}, namely,
the derivation of a recursion formula for the new zeta function. This is then related to the zeta function of the original problem using the techniques devised by
Voros~\cite{voros} -- this part of the process has some similarities with Choi's paper~\cite{choioddsphere}, where a general formula for the determinant in the odd-dimensional
sphere is given. An important step in our method is an explicit formula for the spectral zeta function associated with the shifted eigenvalue sequence (see
Lemma \ref{zetarecursionlemma}), obtained as a result of being able to determine the solutions of the recursion equations satisfied by the zeta-function.
To the best of our knowledge, the derivation of recursion formulae and the solution of the corresponding equations in the context of spectral determinants had
not been used previously, although we could trace some examples of the derivation of recursion formulae for partition functions to the work of Camporesi~\cite{camp}.
The recursions obtained also emphasize the dependence of determinants on the dimension. This is a well-known feature, which has been observed on several occasions
in the literature -- see, for instance~\cite{bar,frei} for the determinant of the Dirac operator on the $n$-dimensional sphere and the quantum harmonic oscillator in
$n$ dimension, and~\cite{okik} for extremal problems on spheres.

As a result of this process, we obtain explicit expressions for the determinant of the Laplacian on even- and odd-dimensional spheres, which may
be found in Theorems~\ref{zetaodddet} and~\ref{zetaevendet}, respectively. For further reference and comparison purposes, and apart from providing explicit expressions for
some low--dimensional spheres explicitly, we also compute the numerical values up to dimension $10\ 000$, showing the first $100$ in Table~\ref{computation:det}.

Due to the flexibility of our method, we are able to apply it to other eigenvalue sequences such as those corresponding to hemispheres (with Dirichlet boundary conditions),
and real projective spaces, for instance. The latter case was studied recently in~\cite{HartSpre}, but we believe the results for hemispheres to be new --
see~\cite{HartSpre2} for a study of the analytic torsion in that case. The graphs with the values of the determinants as a function of the dimension in all these cases
may be seen in Figures~\ref{gr:sphere1},~\ref{gr:hemisphere1}, and ~\ref{gr:proj}, displaying in a very clear way the dependence of these values
on whether the dimension is even or odd.  The behaviour observed in these graphs raises several questions related to monotonicity, the precise asymptotic behaviour
of the determinants in each case and, on a more speculative side, whether the different limits observed in the only case considered where the manifold has a boundary
are a consequence of that fact.

Due to its nature, we believe this method to be applicable to other situations such as complex and quaternionic projective spaces, and to hemispheres with Neumann
boundary conditions, among others. These also include the quantum harmonic oscillator, closely related to the Dirac operator on spheres, which was
studied in~\cite{frei}, and to which we return now to determine a closed-form solution of the recursion formulae derived there for the corresponding zeta function.
The fact that in this case the asymptotic behaviour of the determinant in the dimension is exponentially decreasing, while we expect the remaining cases analysed here to
have an algebraic behaviour, would indicate that our approach is not restricted to determinants with a specific behaviour.


The structure of the paper is as follows. In the next section we collect the necessary results and describe the procedure using the case of the $n-$sphere as an example. In
Section~\ref{otherex} we apply our method to the other examples already mentioned above. Appendix~\ref{appendixcfn} contains some useful facts about central factorial numbers which are
used throughout the paper, while in Appendix~\ref{numerical} we provide some tables with numerical values for reference.

\section{Method description: the case of $\sn{n}$}

\subsection{Eigenvalues and a tale of two zeta functions}

Let $\sn{n}=\{x\in \mathbb{R}^{n+1}\; : \; \norm{x}=1\}$ be the $n-$dimensional unit sphere with the standard metric induced by the $\mathbb{R}^{n+1}$ Euclidean norm. The spectrum
of the Laplace operator $\Delta$ on $\sn{n}$ is well known and given by~\cite{bang}
\begin{align*}
	\spec{n}{k}&=k(k+n-1)
\end{align*}
for $k=0,1,2,\dots,$ with multiplicities
\begin{equation}
	m_k^n:=\text{mult}(\spec{n}{k})=\binom{n+k}{k}-\binom{n+k-2}{k-2}=(2k+n-1)\dfrac{(k+n-2)!}{k!(n-1)!} \label{multi}
\end{equation}
Consider now the associated zeta function
\begin{align*}
	\z{s}&=\sum_{k=1}^{\infty} \left(\spec{n}{k}\right)^{-s}=\sum_{k=1}^{\infty} \dfrac{m^n_k}{(k(k+n-1))^s}
\end{align*}
In order to compute $\zder{0}$, we will consider a shift of the eigenvalues by a constant yielding a perfect square and a more manageable zeta function. To this end,
define
\begin{align*}
	\specshift{n}{k}:=\spec{n}{k}+\lambdas{n}=\left(k+\frac{n-1}{2}\right)^2,
\end{align*}
the eigenvalue sequence obtained after shifting $\omega_{n,k}$ by $\lambdas{n}=\left(\tfrac{n-1}{2}\right)^2$, and consider its associated zeta function
\begin{align}
	\zs{s}{\lambdas{n}}=\sum_{k=1}^{\infty}\left(\spec{n}{k}+\lambdas{n}\right)^{-s}=\sum_{k=1}^{\infty}\left(\specshift{n}{k}\right)^{-s} 
	\label{shitfzeta}
\end{align}
It is clear that $\zs{s}{0}=\z{s}$ and, for simplicity, in what follows we use a prime to denote the derivative with respect to the variable $s$, namely,
$\zeta'(s,a)=\frac{\partial\zeta(s,a)}{\partial s}$. More important, this new zeta function yields an associated determinant which is now possible to calculate in any
dimension, and from which the original determinant may then be retrieved.

\subsection{Relating the determinants of $\z{s}$ and $\zs{s}{\lambdas{n}}$: the method of Voros}\hfill\\

In 1987 Voros studied several functions associated with infinite increasing sequences of real numbers~\cite{voros}. This was later
extended to the more general case of complex number sequences by Quine, Heydari and Song~\cite{quine1}. Of particular relevance here are sequences formed
by the eigenvalues of elliptic operators, including the Laplacian, and one important issue is to know when a certain sequence may be what is referred to
in~\cite{quine1} as {\it zeta regularizable}, that is, when the corresponding zeta function has a meromorphic continuation with at most
simple poles, to a right-half-plane containing the origin, and is analytic at the origin.

One possible way to address these issues, which was used in~\cite{voros}, uses techniques from analytic number theory, including the
analytical continuation of Mellin transforms and the Weierstrass canonical product $E(\lambda)$ associated with the sequence $\{\lambda_k\}$, defined by
\begin{equation}
	E(\lambda):=\prod_{k=1}^{\infty}\left\{ \left( 1-\dfrac{\lambda}{\lambda_k}\right)\exp\left(\dfrac{\lambda}{\lambda_k}+\dfrac{\lambda^2}{2\lambda_k^2}+\dots+
	\dfrac{\lambda^{\lfloor\mu\rfloor}}{\lfloor\mu\rfloor\lambda_k^{\lfloor\mu\rfloor}}\right)\right\},  \label{weiestrass}
\end{equation}
where $\mu$ denotes the abscissa of (absolute) convergence of $\zeta_{\mathfrak{T},M}$, and the sum in the exponent is considered to vanish when $\mu<1$.

In the following theorem we collect a result from~\cite[pp. 447]{voros} in an appropriate form to be used by us in the sequel.
\begin{thm}[Voros~\cite{voros}]
 Suppose that the sequence of eigenvalues $\{\lambda_k\}$ associated with the operator $\mathfrak{T}$ acting on $M$ is a monotonically increasing sequence of real numbers.
 Given a constant $\lambda$, the relationship between the determinants associated with the sequences $\{\lambda_k-\lambda\}$ and $\{\lambda_k\}$ is given by
\begin{equation*}
		\label{voroseq}
		\zeta'_{\mathfrak{T},M}(0)=  \zeta'_{\mathfrak{T},M}(0,-\lambda)
		+\sum_{m=1}^{\lfloor\mu\rfloor}FP\left[\zeta_{\mathfrak{T},M}(m,-\lambda)\right]\frac{\lambda^m}{m}
		+\sum_{m=2}^{\lfloor\mu\rfloor}c_{-m}H_{m-1}\frac{\lambda^m}{m!}-\log(E(\lambda))
\end{equation*}
where an empty sum (i.e. the case where $\mu < 1$) is to be considered zero, the Harmonic numbers $H_{n}$ are given by
\begin{equation*}
	H_n =\sum_{k=1}^{n}\dfrac{1}{k},
\end{equation*}
the finite part $(FP)$ is defined by
\begin{equation} 
	FP[f(s)] = 
	\begin{cases} 
		f(s),   & \quad \text{if} \;s\; \text{is not a pole of }f\\
		\lim\limits_{\epsilon\rightarrow0}\left(f(s+\epsilon)-\dfrac{\res(f,s)}{\epsilon}\right), & \quad \text{if} \;s\; \mbox{is a pole of } f
	\end{cases}
	\label{fp}
\end{equation} 
and 
\begin{equation}
	c_{-m}=\res(\zeta_{\mathfrak{T},M},m)\Gamma(m)
	\label{Cm}
\end{equation}
\label{voros} 
where $\Gamma(z)$ denotes the Gamma function.
\end{thm}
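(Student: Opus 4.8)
The plan is to trade the two determinants for a single analytic identity between the two zeta functions and then read off its derivative at the origin. Working first in the half-plane $\Re(s)>\mu$, where every series converges absolutely, I would expand each term of the shifted zeta function by the generalised binomial series,
\[
\left(\lambda_k-\lambda\right)^{-s}=\lambda_k^{-s}\left(1-\frac{\lambda}{\lambda_k}\right)^{-s}=\sum_{m\ge 0}\binom{-s}{m}(-\lambda)^m\,\lambda_k^{-s-m},
\]
valid for $|\lambda|<\lambda_1$. Summing over $k$ and separating the finitely many indices $m=0,1,\dots,\lfloor\mu\rfloor$ for which $\zeta_{\mathfrak{T},M}(s+m)$ is singular near $s=0$ from the remaining tail, I would obtain a decomposition in which the finite sum $\sum_{m=0}^{\lfloor\mu\rfloor}\binom{-s}{m}(-\lambda)^m\zeta_{\mathfrak{T},M}(s+m)$ carries all the poles, while the tail converges absolutely on a neighbourhood of $s=0$. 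After analytic continuation this identity holds on a punctured disc around the origin, and the whole problem reduces to differentiating it at $s=0$.

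The heart of the argument is the evaluation of the three types of contribution. For each singular index $m$ I would combine the factorisation $\binom{-s}{m}=\tfrac{(-1)^m}{m}\,s\left(1+H_{m-1}s+O(s^2)\right)$~--~in which the harmonic numbers emerge from the logarithmic derivative of $(s+1)\cdots(s+m-1)$ at $s=0$~--~with the Laurent expansion $\zeta_{\mathfrak{T},M}(s+m)=\res(\zeta_{\mathfrak{T},M},m)/s+FP[\zeta_{\mathfrak{T},M}(m)]+O(s)$ coming from~\eqref{fp}. The simple zero of the binomial coefficient cancels the simple pole, and the coefficient of $s$ in the product yields exactly a finite-part term proportional to $FP$ together with a residue term proportional to $H_{m-1}\res(\zeta_{\mathfrak{T},M},m)$; using $\Gamma(m)=(m-1)!$ and the definition~\eqref{Cm} of $c_{-m}$ turns the latter into $c_{-m}H_{m-1}/m!$, while the vanishing $H_0=0$ explains why that sum begins at $m=2$. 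The tail, on the other hand, vanishes term-by-term at $s=0$, so only its derivative survives; a direct computation identifies it with $\sum_k\left[\log\left(1-\lambda/\lambda_k\right)+\sum_{j=1}^{\lfloor\mu\rfloor}\lambda^j/(j\lambda_k^j)\right]$, which is precisely $\log E(\lambda)$ by~\eqref{weiestrass}, since the subtracted binomial terms are exactly the Weierstrass convergence factors. Collecting the $m=0$ term $\zeta'_{\mathfrak{T},M}(0,-\lambda)$ with these pieces reproduces the stated formula.

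The main obstacle is analytic rather than algebraic. First, one must justify interchanging the infinite sum over $k$ with the analytic continuation in $s$: this requires uniform control of the tail on a neighbourhood of the origin, and it is here that the hypothesis that $\{\lambda_k\}$ be zeta-regularisable, with only simple poles to the right of $0$, is essential. Second, one must be careful that the finite parts appearing in the statement are those of the shifted sequence, as in $FP[\zeta_{\mathfrak{T},M}(m,-\lambda)]$, and reconcile them with the unshifted residues through the same binomial identity now evaluated at the integers $s=m$; keeping track of the resulting signs and of the orientation of the shift is exactly the bookkeeping that fixes the relative sign of $\log E(\lambda)$ in the statement and matches the normalisation adopted in~\cite{voros}.
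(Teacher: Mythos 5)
The paper does not actually prove this theorem: it is quoted from Voros, and the only internal points of comparison are the proof of Corollary~\ref{voroscorollary} and the way the identity is invoked in~\eqref{detpre}, \eqref{detpre2} and~\eqref{detprehemi}. Your overall strategy~---~binomial expansion, cancellation of each simple pole of the zeta function against the simple zero of $\binom{-s}{m}$ at $s=0$, harmonic numbers from the logarithmic derivative of $(s+1)\cdots(s+m-1)$, and identification of the tail with the canonical product~---~is a legitimate and essentially standard route to a formula of this type, and your local analysis at the singular indices is correct.

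The genuine gap is that you have set the expansion up in the wrong direction, and the identity it produces is not the one stated. Expanding $(\lambda_k-\lambda)^{-s}=\sum_{m\ge 0}\binom{-s}{m}(-\lambda)^m\lambda_k^{-s-m}$ gives $\zeta_{\mathfrak{T},M}(s,-\lambda)=\sum_{m\ge 0}\tfrac{1}{m!}\,s(s+1)\cdots(s+m-1)\,\lambda^m\,\zeta_{\mathfrak{T},M}(s+m)$, so the $m=0$ term is $\zeta_{\mathfrak{T},M}(s)$~---~not $\zeta_{\mathfrak{T},M}(s,-\lambda)$, which is the left-hand side~---~and differentiating at $s=0$ yields
$\zeta'_{\mathfrak{T},M}(0,-\lambda)=\zeta'_{\mathfrak{T},M}(0)+\sum_{m=1}^{\lfloor\mu\rfloor}\tfrac{\lambda^m}{m}\left(FP[\zeta_{\mathfrak{T},M}(m)]+H_{m-1}\res(\zeta_{\mathfrak{T},M},m)\right)+\sum_{m>\lfloor\mu\rfloor}\tfrac{\lambda^m}{m}\zeta_{\mathfrak{T},M}(m)$,
in which every finite part, residue and convergence factor refers to the \emph{unshifted} sequence, and all of these terms change sign once you solve for $\zeta'_{\mathfrak{T},M}(0)$. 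That is a different identity from the stated one, where $FP[\zeta_{\mathfrak{T},M}(m,-\lambda)]$, $c_{-m}$ and $E$ are attached to the shifted sequence (this is unambiguous from the paper's applications, e.g.\ the computation of $c^{2n}_{-m}$ from the residues of $\zsdim{s}{\lambdas{2n}}{2n}$, and from the proof of Corollary~\ref{voroscorollary}, where $\sum_k\lambda_k^{-m}$ is identified with $\zeta_{\mathfrak{T},M}(m,-\lambda)$). The ``reconciliation'' you defer to bookkeeping is not bookkeeping: for example $\sum_{m>\lfloor\mu\rfloor}\zeta_{\mathfrak{T},M}(m)\lambda^m/m$ and $\sum_{m>\lfloor\mu\rfloor}\zeta_{\mathfrak{T},M}(m,-\lambda)\lambda^m/m$ already differ at order $\lambda^{\lfloor\mu\rfloor+2}$, so converting your version into the stated one amounts to redoing the entire computation, not to tracking a sign. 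The clean fix is to expand the other way from the start: write $\lambda_k^{-s}=\left((\lambda_k-\lambda)+\lambda\right)^{-s}=\sum_{m\ge0}\binom{-s}{m}\lambda^m(\lambda_k-\lambda)^{-s-m}$, which lands directly on the stated structure with all data attached to the shifted sequence and with $(-\lambda)^m$ in place of $\lambda^m$~---~consistent with the paper's applications, where the shift enters as the positive quantity $\lambdas{n}=-\lambda$. Your description of the tail also carries the same confusion: with your orientation the tail's derivative equals $-\log E(\lambda)$ for the product over the unshifted $\lambda_k$ while sitting on the $\zeta'_{\mathfrak{T},M}(0,-\lambda)$ side of the equation, so it appears as $+\log E(\lambda)$ after rearrangement, not as the $-\log E(\lambda)$ in the statement; this is precisely the sign your argument cannot ``fix'' without switching directions.
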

\begin{remark}
 For a different formula for the coefficients $c_{-m}$ see Section~\ref{disc}.
\end{remark}

For our purposes, and for ease of reference, consider the following corollary of Theorem~\ref{voros}.
\begin{corollary} Assume that $\abs{\tfrac{\lambda}{\lambda_k}}<1$ for all $k\in\N$. Then
	\begin{align*}
			\zeta'_{\mathfrak{T},M}(0)&=  \zeta'_{\mathfrak{T},M}(0,-\lambda)+\sum_{m=1}^{\lfloor\mu\rfloor}FP\left[\zeta_{\mathfrak{T},M}(m,-\lambda)\right]\frac{\lambda^m}{m}+\sum_{m=2}^{\lfloor\mu\rfloor}c_{-m}H_{m-1}\frac{\lambda^m}{m!}\\
			&\hspace{5mm}+\sum_{m=\lfloor\mu\rfloor+1}^{\infty} \zeta_{\mathfrak{T},M}(m,-\lambda)\dfrac{\lambda^m}{m}
	\end{align*}
	\label{voroscorollary}
\end{corollary}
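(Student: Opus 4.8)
The plan is to obtain the statement directly from Theorem~\ref{voros}. Comparing the formula claimed here with the one in that theorem, every term agrees verbatim except that the single contribution $-\log(E(\lambda))$ has been replaced by the series $\sum_{m=\lfloor\mu\rfloor+1}^{\infty}\zeta_{\mathfrak{T},M}(m,-\lambda)\tfrac{\lambda^{m}}{m}$. Consequently the whole task reduces to expanding $-\log(E(\lambda))$ as a power series in $\lambda$ and identifying its coefficients; the point is that the extra hypothesis $\abs{\lambda/\lambdas{k}}<1$ for all $k$ (which is absent from Theorem~\ref{voros}) is precisely what licenses such an expansion and its term-by-term manipulation.

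First I would take the logarithm of the canonical product~\eqref{weiestrass} termwise, writing
\[
	\log(E(\lambda)) = \sum_{k=1}^{\infty}\left[\log\!\left(1-\frac{\lambda}{\lambdas{k}}\right)+\sum_{j=1}^{\lfloor\mu\rfloor}\frac{\lambda^{j}}{j\,\lambdas{k}^{\,j}}\right].
\]
Since $\abs{\lambda/\lambdas{k}}<1$, each logarithm admits the convergent Maclaurin expansion $\log(1-\lambda/\lambdas{k})=-\sum_{j\ge 1}\lambda^{j}/(j\,\lambdas{k}^{\,j})$, so the finite exponential regulators cancel exactly the terms $j=1,\dots,\lfloor\mu\rfloor$ of that expansion. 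What is left for each $k$ is the tail $-\sum_{m\ge\lfloor\mu\rfloor+1}\lambda^{m}/(m\,\lambdas{k}^{\,m})$. Summing over $k$ and exchanging the order of the two summations, the inner summation over $k$ reproduces, for each integer $m>\lfloor\mu\rfloor$, the value of the spectral zeta function at $m$, and reassembling these values over $m$ yields the tail series appearing in the statement. Substituting this expansion of $-\log(E(\lambda))$ back into Theorem~\ref{voros} then gives the corollary.

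The step I expect to be delicate is the interchange of the two summations. I would justify it by absolute convergence, applying Tonelli's theorem to the nonnegative array $\abs{\lambda}^{m}/(m\,\lambdas{k}^{\,m})$: because $\{\lambdas{k}\}$ is positive and increasing, $\abs{\lambda/\lambdas{k}}<1$ for all $k$ is equivalent to $\abs{\lambda}<\lambdas{1}$, and for the range $m\ge\lfloor\mu\rfloor+1>\mu$ the sums $\sum_{k}\lambdas{k}^{-m}$ converge since $\zeta_{\mathfrak{T},M}$ is absolutely convergent on $\Re(s)>\mu$; a geometric bound in $m$ then shows the double series converges absolutely, so the rearrangement is valid. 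I would also record that this same region $\abs{\lambda}<\lambdas{1}$ is exactly where the Maclaurin series of $\log(E(\lambda))$ converges, its radius of convergence being the distance $\lambdas{1}$ to the nearest zero of the entire function $E$, which is what makes the termwise logarithm legitimate in the first place. With these convergence points settled, the reduction of the preceding paragraph completes the argument.
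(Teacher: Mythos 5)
Your proposal is correct and follows essentially the same route as the paper: take the logarithm of the canonical product~\eqref{weiestrass}, expand each $\log\left(1-\tfrac{\lambda}{\lambda_k}\right)$ as a Maclaurin series (licensed by $\abs{\lambda/\lambda_k}<1$), let the exponential regulators cancel the terms $m\le\lfloor\mu\rfloor$, and interchange the two summations to identify $-\log(E(\lambda))$ with the tail series, which is then substituted into Theorem~\ref{voros}. Your justification of the interchange via absolute convergence (Tonelli, using $\abs{\lambda}<\lambda_1$ and the convergence of $\sum_k\lambda_k^{-m}$ for $m>\mu$) is a slightly more detailed version of the one-line justification the paper gives, so there is nothing to add.
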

\begin{proof}
	Applying logarithms to both sides of \eqref{weiestrass} yields
	\begin{align*}
		-\log(E(\lambda))&= \sum_{k=1}^{\infty} \left[- \log\left(1-\dfrac{\lambda}{\lambda_k}\right) - \sum_{m=1}^{\lfloor\mu\rfloor} \dfrac{\lambda^m}{m\lambda_k^m}\right] \eqskip
		&= \sum_{k=1}^{\infty} \left[ \sum_{m=1}^{\infty}\dfrac{\lambda^m}{m\lambda_k^m} - \sum_{m=1}^{\lfloor\mu\rfloor} \dfrac{\lambda^m}{m\lambda_k^m}\right]  \eqskip
		&=\sum_{m=\lfloor\mu\rfloor + 1}^{\infty} \zeta_{\mathfrak{T},M}(m,-\lambda)\dfrac{\lambda^m}{m} 
	\end{align*}
	where, in the first step, we can exchange the order of summation of the infinite series since both series are absolutely convergent due to the assumption
	$\abs{\tfrac{\lambda}{\lambda_k}}<1$ for all $k$; this also allows us to use the logarithm series expansion in the second line.
\end{proof}

\subsection{Recursions}

We shall now introduce the crucial step underlying our approach. Following Theorem~A in~\cite{frei}, in which a two-term recursion formula in the dimension was derived
for the spectral determinant of the quantum harmonic oscillator, we show that a similar recursion may also be obtained in our first example, i.e the Laplace operator on $\sn{n}$.
For this case, the zeta function $\zs{s}{\lambdas{n}}$ defined in~\eqref{shitfzeta} takes the form
\begin{equation}
	\label{zetaboring}
	\zs{s}{\lambdas{n}}=\sum_{k=1}^{\infty} \dfrac{m_k^n}{\left(k+\frac{n-1}{2}\right)^{2s}}.
\end{equation}
\begin{lemma} \label{zetarecursionlemma} The zeta function $\zs{s}{\lambdas{n}}$ in \eqref{zetaboring} satisfies the following two-term recursion
	\begin{equation*}
		\zsdim{s}{\lambdas{n+2}}{n+2}=\dfrac{\zs{s-1}{\lambdas{n}}-(\frac{n-1}{2})^2 \zs{s}{\lambdas{n}}}{n(n+1)}-\left(\frac{2}{n+1}\right)^{2s}
	\end{equation*}
	with initial conditions
	\begin{align*}
		\zsdim{s}{\lambdas{1}}{1}&=2\zeta(2s) \\
		\zsdim{s}{\lambdas{2}}{2}&=(4^s-2)\zeta(2s-1)-4^s 
	\end{align*}
\end{lemma}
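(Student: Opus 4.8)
The plan is to prove the recursion by direct manipulation of the defining series in \eqref{zetaboring}, reducing the $(n+2)$-dimensional sum to the $n$-dimensional one by a single reindexing together with a factorization of the multiplicities. All the manipulations are carried out in the half-plane of absolute convergence, where rearrangements are justified; the identity between the meromorphic continuations then follows by uniqueness of analytic continuation. Writing $\nu=\frac{n-1}{2}$, so that the base point for dimension $n$ is $(k+\nu)^2$, the base point for dimension $n+2$ is $\left(k+\frac{n+1}{2}\right)^2=(k+\nu+1)^2$. I would therefore reindex the $(n+2)$-series by $j=k+1$, giving
\[
\zsdim{s}{\lambdas{n+2}}{n+2}=\sum_{j=2}^{\infty}\frac{m_{j-1}^{n+2}}{(j+\nu)^{2s}},
\]
which now carries the same base point $(j+\nu)^{2s}$ as the $n$-dimensional zeta function, at the cost of starting the summation at $j=2$ rather than $j=1$.

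The key algebraic step is to compare the multiplicities. Using the factorial expression in \eqref{multi}, I expect the ratio to collapse to
\[
\frac{m_{j-1}^{n+2}}{m_j^n}=\frac{j(j+n-1)}{n(n+1)},
\]
where the numerator $j(j+n-1)$ is precisely the unshifted eigenvalue $\spec{n}{j}$. Recognizing the identity $j(j+n-1)=(j+\nu)^2-\nu^2$ is what makes the recursion work: inserting it splits the summand into a piece of exponent $(j+\nu)^{-2(s-1)}$ and a piece $-\nu^2(j+\nu)^{-2s}$, that is, into the $s-1$ and $s$ evaluations of the $n$-dimensional zeta function, up to the overall factor $\tfrac{1}{n(n+1)}$.

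The remaining work is bookkeeping to reconcile the summation limits: both resulting sums start at $j=2$, whereas $\zs{s}{\lambdas{n}}$ and $\zs{s-1}{\lambdas{n}}$ start at $k=1$, so I would add and subtract the $j=1$ terms. With $m_1^n=n+1$ and $1+\nu=\frac{n+1}{2}$, these corrections are explicit powers of $\frac{2}{n+1}$; collecting them and using
\[
\nu^2-(1+\nu)^2=\left(\tfrac{n-1}{2}\right)^2-\left(\tfrac{n+1}{2}\right)^2=-n
\]
should cause the factor $n$ in the denominator to cancel and leave exactly the single correction $-\left(\frac{2}{n+1}\right)^{2s}$ in the statement. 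I expect this merging of the two boundary terms into one clean term to be the only delicate point; everything else is routine once the factorization is in hand.

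Finally, for the initial conditions I would substitute $n=1$ and $n=2$ directly. For $n=1$ one has $\nu=0$ and $m_k^1=2$, so the series is $2\sum_k k^{-2s}=2\zeta(2s)$ at once. For $n=2$ one has $m_k^2=2k+1=2\left(k+\tfrac12\right)$, so the series collapses to $2\sum_k\left(k+\tfrac12\right)^{-(2s-1)}$; rewriting this as a sum over odd integers and using $\sum_{m\ \mathrm{odd}}m^{-z}=(1-2^{-z})\zeta(z)$ with $z=2s-1$ (subtracting the missing $m=1$ term) should yield $(4^s-2)\zeta(2s-1)-4^s$ after simplification.
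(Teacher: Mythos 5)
Your proposal is correct and follows essentially the same route as the paper: the identical multiplicity ratio $m_{j-1}^{n+2}/m_j^n = j(j+n-1)/(n(n+1))$, the same difference-of-squares factorization $j(j+n-1)=(j+\nu)^2-\nu^2$, and the same boundary-term bookkeeping producing $-\left(\tfrac{2}{n+1}\right)^{2s}$; the only difference is that you reindex before factoring the multiplicities while the paper reindexes afterwards. Your verification of the initial conditions (which the paper states without proof) is also correct.
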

\begin{proof}
	Consider the multiplicities $m^n_k$ given in~\eqref{multi} and note that
	\begin{align*}
		m^{n+2}_k&=(2k+n+1)\dfrac{(k+n)!}{k!(n+1)!}\\
		&=m^{n}_{k+1} \dfrac{(k+n)(k+1)}{n(n+1)}\\
		&=m^{n}_{k+1} \dfrac{k^2+k(n+1)+n}{n(n+1)}\\
		&=m^{n}_{k+1} \dfrac{(2k+n+1)^2-(n-1)^2}{4n(n+1)}.
	\end{align*}
	Substituting this in the expression for $\zs{s}{\lambdas{n}}$ given by~\eqref{zetaboring} yields
	\begin{align*}
		\zsdim{s}{\lambdas{n+2}}{n+2}&=\mathlarger{\sum}_{k=1}^{\infty} \dfrac{m_k^{n+2}}{\left(k+\frac{n+1}{2}\right)^{2s}}\eqskip
		&=\mathlarger{\sum}_{k=1}^{\infty}\frac{m_{k+1}^{n}}{\left(k+\tfrac{n+1}{2}\right)^{2s}}\dfrac{(2k+n+1)^2-(n-1)^2}{4n(n+1)}\eqskip
		&=\mathlarger{\sum}_{k=1}^{\infty}\frac{m_{k+1}^{n}}{\left(k+\tfrac{n+1}{2}\right)^{2s}}\dfrac{(k+\frac{n+1}{2})^2-(\frac{n-1}{2})^2}{n(n+1)}\eqskip
		&=\left[\mathlarger{\sum}_{k=1}^{\infty}\frac{m_{k+1}^{n}}{\left(k+\tfrac{n+1}{2}\right)^{2(s-1)}}-\left(\frac{n-1}{2}\right)^2 
		\mathlarger{\sum}_{k=1}^{\infty}\frac{m_{k+1}^{n}}{\left(k+\tfrac{n+1}{2}\right)^{2s}}\right]\frac{1}{n(n+1)}\eqskip
		&=\left[\mathlarger{\sum}_{k=2}^{\infty}\frac{m_{k}^{n}}{\left(k+\tfrac{n-1}{2}\right)^{2(s-1)}}-\left(\frac{n-1}{2}\right)^2
		\mathlarger{\sum}_{k=2}^{\infty}\frac{m_{k}^{n}}{\left(k+\tfrac{n-1}{2}\right)^{2s}}\right]\frac{1}{n(n+1)}\eqskip
		&=\dfrac{\zs{s-1}{\lambdas{n}}-(\frac{n-1}{2})^2 \zs{s}{\lambdas{n}}}{n(n+1)}-\left(\frac{2}{n+1}\right)^{2s}.
	\end{align*}
\end{proof}
Even though $\{\specshift{n}{k}\}$ is not the original eigenvalue sequence associated with the Laplacian on $\sn{n}$, its relevance should be clear by now. In the preceding
section we presented a method due to Voros by which we are able to relate the functional determinant of zeta functions associated with different sequences, as long as these are only
shifted by a constant. In fact we have that 
\begin{align*}
	 -\log\left[ \det\left(\sn{n}\right)\right]&=\zderdim{0}{n}=\zsder{0}{\lambdas{n}}+\sum_{m=1}^{\lfloor\mu\rfloor}FP\left[\zs{m}{\lambdas{n}}\right]\frac{(\lambdas{n})^m}{m}+\sum_{m=2}^{\lfloor\mu\rfloor}c_{-m}H_{m-1}\frac{(\lambdas{n})^m}{m!}\\
	&\hspace{22mm}+\sum_{m=\lfloor\mu\rfloor+1}^{\infty} \zs{m}{\lambdas{n}}\dfrac{(\lambdas{n})^m}{m}
\end{align*}
The recursion in the preceding lemma will enable us to derive an explicit formula for $\zs{s}{\lambdas{n}}$, specifically, a finite weighted sum of Riemann zeta functions $\zeta(s)$.
The properties of such weights are induced by the recursion and shall be dealt with in Appendix \ref{appendixcfn}. As mentioned in the Introduction, this recursion clearly shows a
dual behavior on the dimension.

\subsection{Solution to the recursion: explicit expressions for $\zs{s}{\lambdas{n}}$}
We are now able to solve the recursion in Lemma \ref{zetarecursionlemma} to obtain more manageable expressions for $\zs{s}{\lambdas{n}}$ in even and odd dimensions separately. 

\begin{thm}
Let $n\in\N$. The zeta function $\zs{s}{\lambdas{n}}$ defined in \eqref{zetaboring} satisfies the following identities
\begin{align*}
	\zsdim{s}{\lambdas{2n+1}}{2n+1}&=\sum_{i=1}^{n}\bar{u}(n,i)\zeta(2(s-i)) - n^{-2s}\\
	\zsdim{s}{\lambdas{2n}}{2n}&=\sum_{i=1}^{n}\bar{v}(n,i)\big(4^s-2^{2i-1}\big)\zeta(2s-2i+1) - \left(\frac{2n-1}{2}\right)^{-2s}
\end{align*}	
	where $\bar{u}(n,i)$ and $\bar{v}(n,i)$ are defined in~\eqref{cfnevennorm} and~\eqref{cfnoddnorm} respectively.
	\label{zetarecsol}
\end{thm}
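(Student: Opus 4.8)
The plan is to prove both identities by induction on $n$, handling the odd- and even-dimensional families separately and using the recursion of Lemma~\ref{zetarecursionlemma} as the only engine, with the coefficients supplied by the central factorial numbers of Appendix~\ref{appendixcfn}. For the odd family I would write $f_n(s):=\zeta_{\mathbb{S}^{2n+1}}(s,\lambda_{2n+1})$ and specialise the recursion to the step $2n-1\mapsto 2n+1$, obtaining
\begin{equation*}
	f_n(s)=\frac{f_{n-1}(s-1)-(n-1)^2\,f_{n-1}(s)}{2n(2n-1)}-n^{-2s},
\end{equation*}
with $f_0(s)=2\zeta(2s)$ read off from the first initial condition. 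For the even family I would set $g_n(s):=\zeta_{\mathbb{S}^{2n}}(s,\lambda_{2n})$, specialise to the step $2n-2\mapsto 2n$, and use $g_1(s)=(4^s-2)\zeta(2s-1)-4^s$ as base case.

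The induction hinges on how the proposed building blocks behave under the shift $s\mapsto s-1$ present in the recursion. In the odd case the blocks transform cleanly, $\zeta(2((s-1)-i))=\zeta(2(s-(i+1)))$, so the index is simply raised by one. In the even case the elementary identity $4^{s-1}-2^{2i-1}=\tfrac14\bigl(4^s-2^{2(i+1)-1}\bigr)$ yields
\begin{equation*}
	\bigl(4^{s-1}-2^{2i-1}\bigr)\zeta\bigl(2(s-1)-2i+1\bigr)=\tfrac14\bigl(4^s-2^{2(i+1)-1}\bigr)\zeta\bigl(2s-2(i+1)+1\bigr),
\end{equation*}
so again the index is raised by one, now with an extra factor $\tfrac14$. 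Substituting the inductive hypothesis into the recursion and reindexing then reproduces precisely the asserted form in dimension one higher, with coefficients forced to obey
\begin{align*}
	\bar u(n,i)&=\frac{\bar u(n-1,i-1)-(n-1)^2\,\bar u(n-1,i)}{2n(2n-1)},\\
	\bar v(n,i)&=\frac{\bar v(n-1,i-1)-(2n-3)^2\,\bar v(n-1,i)}{4(2n-2)(2n-1)},
\end{align*}
where any term whose second index leaves the range $1\le i\le n$ is taken to be zero.

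The second point is that the single non-$\zeta$ correction term telescopes. At the preceding step this term is $-c^{-2s}$ with $c=n-1$ in the odd case and $c=\tfrac{2n-3}{2}$ in the even case; the combination $f_{n-1}(s-1)-(n-1)^2 f_{n-1}(s)$, and its even analogue, sends $-c^{-2s}$ to $-c^2\,c^{-2s}+c^2\,c^{-2s}=0$. Hence the inherited correction cancels and only the term freshly produced by the recursion remains, namely $-n^{-2s}$ in the odd case and, using $\bigl(\tfrac{2}{2n-1}\bigr)^{2s}=\bigl(\tfrac{2n-1}{2}\bigr)^{-2s}$, the term $-\bigl(\tfrac{2n-1}{2}\bigr)^{-2s}$ in the even case. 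These match exactly the lone correction terms in the two asserted identities.

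What remains, and where I expect the main difficulty to lie, is to identify the two coefficient recursions above with the defining relations for $\bar u(n,i)$ and $\bar v(n,i)$ in~\eqref{cfnevennorm} and~\eqref{cfnoddnorm}: one must match the normalising factors $2n(2n-1)$ and $4(2n-2)(2n-1)$ together with the weights $(n-1)^2$ and $(2n-3)^2$ against the three-term relations satisfied by the (normalised) central factorial numbers established in Appendix~\ref{appendixcfn}, and keep careful track of the boundary indices $i=1$ and $i=n$, where one of the two terms on the right drops out. Granting this identification, the base cases close the argument: $g_1$ is literally the $n=1$ instance of the even formula, while one application of the recursion to $f_0$ gives $f_1(s)=\zeta(2s-2)-1$, the $n=1$ instance of the odd formula with $\bar u(1,1)=1$. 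The induction then propagates both identities to all $n$.
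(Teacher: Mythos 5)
Your proposal is correct and follows essentially the same route as the paper: induction on $n$ via the recursion of Lemma~\ref{zetarecursionlemma}, with the correction terms telescoping and the coefficient recursions you derive being exactly Propositions~\ref{cfnevenprop}~$\mathit{(ii)}$ and~\ref{cfnoddprop}~$\mathit{(ii)}$ from the appendix (so the "remaining difficulty" you flag is already settled there, with the boundary conventions supplied by Proposition~\ref{cfnprop}~$\mathit{(iv)}$--$\mathit{(v)}$). Your treatment of the even case, in particular the identity $4^{s-1}-2^{2i-1}=\tfrac14\bigl(4^{s}-2^{2(i+1)-1}\bigr)$ and the base case $g_1$, correctly fills in what the paper dismisses as "analogous".
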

\begin{proof}\hfill\\
	- Odd dimensional case: 	
	\item $\blacklozenge$ Induction on $n$.\\
	$\diamondsuit$ \emph{Base case}: $n=1$ \\
	In view of Lemma \ref{zetarecursionlemma}, $\zsdim{s}{\lambdas{3}}{3}=\zeta(2(s-1))-1$ which agrees with Theorem \ref{zetarecsol} since $\bar{u}(1,1)=1$.\\
	$\diamondsuit$ \emph{Induction step}:\\ Following the recursion in Lemma \ref{zetarecursionlemma} for $\zsdim{s}{\lambdas{2n+1}}{2n+1}$ we obtain	
	\begin{align*}
		&\zsdim{s}{\lambdas{2n+1}}{2n+1}=\dfrac{\zsdim{s-1}{\lambdas{2n-1}}{2n-1}-(n-1)^2\zsdim{s}{\lambdas{2n-1}}{2n-1}}{2n(2n-1)}-n^{-2s}&&\\ &=\dfrac{\sum\limits_{i=1}^{n-1}\bar{u}(n-1,i)\zeta(2(s-i-1))-(n-1)^2\sum\limits_{i=1}^{n-1}\bar{u}(n-1,i)\zeta(2(s-i))}{2n(2n-1)}-n^{-2s}&& \small\text{Induction Hypothesis}\\
		&=\dfrac{\sum\limits_{i=1}^{n}\bar{u}(n-1,i-1)\zeta(2(s-i)) -(n-1)^2\sum\limits_{i=1}^{n}\bar{u}(n-1,i)\zeta(2(s-i))}{2n(2n-1)}-n^{-2s}&& \small\text{Proposition \ref{cfnprop} $\mathit{(iv)}$ and $\mathit{(v)}$}\\  
		&=\left[\sum\limits_{i=1}^{n}\frac{\bar{u}(n-1,i-1)-(n-1)^2\bar{u}(n-1,i)}{2n(2n-1)}\zeta(2(s-i))\right] -n^{-2s}&&\\	 
		&=\sum\limits_{i=1}^{n}\bar{u}(n,i)\zeta(2(s-i)) -n^{-2s}&& \small\text{Proposition \ref{cfnevenprop} $\mathit{(ii)}$}	   
	\end{align*}
    - Even dimensional case:
    Analogous to the odd dimensional case by means of Proposition \ref{cfnoddprop}.
\end{proof}
From Theorem \ref{zetarecsol} we conclude that $\zsdim{s}{\lambdas{2n+1}}{2n+1}$ is defined for $\Re(s)> n+\frac{1}{2}$ and it can be meromorphically continued to a
function of the whole complex plane, since it depends uniquely on the zeta function $\zeta(s)$, with exactly $n$ poles at $\{\frac{3}{2},\frac{5}{2},\dots,\frac{2n+1}{2}\}$.
Analogously, we conclude that $\zsdim{s}{\lambdas{2n}}{2n}$ is defined for $\Re(s)> n$ and can be analytically continued to a meromorphic function of the whole complex plane
with exactly $n$ poles at $\{1,2,\dots,n\}$.

\subsection{Recovering $\z{s}$ and the determinant}

We are now able to combine the expressions obtained in Theorem~\ref{zetarecsol} and Voros method, namely, Corollary~\ref{voroscorollary}, to obtain simple and efficiently
computable expressions for the determinant of the Laplacian $\Delta$ on the odd ($\sn{2n+1}$) and even ($\sn{2n}$) dimensional spheres separately.

In the following proposition we recall some properties of the Riemann and Hurwitz zeta functions, $\zeta(s)$ and $\zeta(s,a)$, respectively, and of the Bernoulli numbers $B_n$,
all of which are well-known and may be found in~\cite[Chapter~2]{choisri1}.
\begin{proposition}  Let $p,n\in\N$ and consider the Bernoulli numbers, $B_n$. The following identities hold true.
		
$ \arraycolsep=3pt\def\arraystretch{2.1}
	\begin{array}{cl}
		\mathit{(i)} & \zeta(s)=\zeta(s,1)=\dfrac{1}{(2^s - 1)}\zeta(s,\frac{1}{2}) \\
		\mathit{(ii)} & \zeta(s,a)=\zeta(s,a+n)+\sum\limits_{j=0}^{n-1}(j+a)^{-s}  \\
		\mathit{(iii)} & \zeta'(s,n)=\zeta'(s)+\sum\limits_{j=1}^{n-1}j^{-s}\log(j)  \\
		\mathit{(iv)} & \zeta'(s,n+\frac{1}{2})=\log(2)2^{s}\zeta(s)+(2^{s}-1)\zeta'(s)+\sum\limits_{j=1}^{n}\left(\dfrac{2j-1}{2}\right)^{-s}\log{\left(\frac{2j-1}{2}\right)} \\
		\mathit{(v)} & \zeta'(0)=-\frac{1}{2}\log(2\pi) \\
		\mathit{(vi)} & \zeta(-n)=(-1)^n \dfrac{B_{n+1}}{n+1} \\
		\mathit{(vii)} & \sum\limits_{k=1}^{n}k^p= \dfrac{1}{p+1}\sum\limits_{j=0}^{p}\binom{p+1}{j}B_j n^{p+1-j} \qquad\normalfont\text{Faulhaber's formula} \\
\end{array}	$
	\label{zetaprop}
\end{proposition}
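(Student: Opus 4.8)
\section*{Proof proposal}

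The seven identities are all classical, and my plan is to obtain each from one of two sources: the defining Dirichlet series of the Hurwitz zeta function together with term-by-term differentiation and analytic continuation (for the elementary shift and splitting identities $(i)$--$(iv)$), and the Riemann functional equation together with the generating function of the Bernoulli numbers (for the special values $(v)$--$(vii)$). Throughout I would work first in the half-plane $\Re(s)>1$, where $\zeta(s,a)=\sum_{k=0}^{\infty}(k+a)^{-s}$ converges absolutely, and then extend each resulting equality to the whole plane by analytic continuation, using that both sides are meromorphic and agree on a set with an accumulation point.

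For the elementary block I proceed as follows. Identity $(i)$ follows by splitting $\zeta(s)=\sum_{m\geq 1}m^{-s}$ into even and odd parts: the even part is $2^{-s}\zeta(s)$, whence the odd part $\sum_{k\geq 0}(2k+1)^{-s}$ equals $(1-2^{-s})\zeta(s)$; combined with $\zeta(s,\tfrac12)=2^s\sum_{k\geq 0}(2k+1)^{-s}$ this gives $\zeta(s,\tfrac12)=(2^s-1)\zeta(s)$. Identity $(ii)$ is the telescoping obtained by breaking the defining series at the index $n$ and reindexing the tail as $\zeta(s,a+n)$. Identities $(iii)$ and $(iv)$ are then obtained by specialising $(ii)$ (to $a=1$ shifted by $n-1$, and to $a=\tfrac12$ shifted by $n$, respectively), solving for $\zeta(s,n)$ and $\zeta(s,n+\tfrac12)$, and differentiating once in $s$; here $(i)$ is used to rewrite $\zeta(s,\tfrac12)$ in $(iv)$, and $\tfrac{d}{ds}j^{-s}=-j^{-s}\log j$ together with $\tfrac{d}{ds}2^s=2^s\log 2$ produce the logarithmic and the $\log 2$ terms. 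Differentiating a finite sum termwise is unproblematic, so the only point requiring care is that the differentiated identities survive the analytic continuation, which holds since each side is analytic away from $s=1$.

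The special values require the functional equation and the Bernoulli numbers rather than mere series manipulation. For $(vi)$ I would use the expression of $\zeta(-n)$ through the Bernoulli numbers coming from the functional equation (equivalently, from a contour-integral representation of $\zeta$), fixing the convention $B_1=-\tfrac12$ so that $\zeta(0)=B_1=-\tfrac12$ and $\zeta(-n)=(-1)^nB_{n+1}/(n+1)$ reproduces the trivial zeros at even $n\geq 2$. Identity $(v)$ is the classical value $\zeta'(0)=-\tfrac12\log(2\pi)$, which I would get by taking the logarithmic derivative of the functional equation at $s=0$ (or from Lerch's formula for $\zeta'(0,a)$). Finally, $(vii)$ is Faulhaber's formula, derived from the generating function $t/(e^t-1)=\sum_{n\geq 0}B_nt^n/n!$ by comparing coefficients in $\sum_{k=0}^{n-1}e^{kt}=(e^{nt}-1)/(e^t-1)$, matching the Bernoulli-number convention to the stated range of summation.

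The main obstacle is not any single computation -- each identity is short -- but rather bookkeeping consistency: ensuring that the Bernoulli-number convention fixed for $(vi)$ is reconciled with the one implicit in the statement of $(vii)$, and that every equality first derived in the convergence half-plane is legitimately propagated to all $s$ by analytic continuation. The genuinely non-elementary inputs are the functional equation (for $(v)$ and $(vi)$) and the Bernoulli generating function (for $(vii)$); everything else reduces to splitting or differentiating absolutely convergent series.
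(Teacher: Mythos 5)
Your proposal is correct, but there is nothing in the paper to compare it against: the authors state this proposition without proof, remarking only that all seven identities ``are well-known and may be found in \cite[Chapter~2]{choisri1}''. Your derivations are the standard ones --- the even/odd splitting for $\mathit{(i)}$, truncation of the Dirichlet series for $\mathit{(ii)}$, specialisation and termwise differentiation for $\mathit{(iii)}$ and $\mathit{(iv)}$, the functional equation for $\mathit{(v)}$ and $\mathit{(vi)}$, and the Bernoulli generating function for $\mathit{(vii)}$ --- and they all go through. One small remark on the bookkeeping issue you flag: it is less delicate than you suggest. Since $\mathit{(vi)}$ is only asserted for $n\in\N$, the coefficient $B_{1}$ never actually enters it (only $B_{m}$ with $m\geq 2$ appears, and these are convention-independent), so there is no genuine tension with $\mathit{(vii)}$; the latter, with the sum running from $k=1$ to $n$, simply requires the convention $B_{1}=+\tfrac12$, or equivalently the addition of the boundary term $n^{p}$ to the $\sum_{k=0}^{n-1}$ version you obtain from $\sum_{k=0}^{n-1}e^{kt}=(e^{nt}-1)/(e^{t}-1)$. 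With that single adjustment made explicit, your argument is complete.
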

These will be used multiple times throughout. It will also prove useful to consider the following identities which can be found in \cite[pp. 258,  identities~(64) and~(67)]{choisri1}. 
\begin{proposition}\label{crazyformulas} Let $i\in\N_{0}$ and $\abs{t}<\abs{a}$. The following identities hold true.
	\begin{gather*}
	\begin{array}{llll}
		\mathit{(i)}&\;\dsum_{m=1-i}^{\infty}\dfrac{\zeta(2m,a)}{m+i}t^{2m+2i} &=& \dsum_{k=0}^{2i}\binom{2i}{k}\bigg[\zeta'(-k,a-t)+(-1)^k\zeta'(-k,a+t)\bigg]t^{2i-k} - 2\zeta'(-2i,a)\eqskip
		\mathit{(ii)}&\;\dsum_{\substack{m=-i\\i\neq0}}^{\infty}\dfrac{\zeta(2m+1,a)}{m+i+1}t^{2m+2i+2} &=&\dsum_{k=0}^{2i+1}\binom{2i+1}{k}\bigg[\zeta'(-k,a-t)-(-1)^k\zeta'(-k,a+t)\bigg]t^{2i-k+1}\eqskip
		&	&& \hspace{5mm}-\dfrac{t^{2i+2}}{i+1}[\psi(2i+2)-\psi(a)+\gamma]- 2\zeta'(-2i-1,a)
	\end{array}
	\end{gather*}
where $\psi(z)$ denotes the digamma function defined as the logarithmic derivative of $\Gamma$. For $n\in\N$ we have
\begin{equation}
	\psi(n)=H_{n-1}-\gamma
	\label{psiprop}
\end{equation}
where $\gamma$ is the Euler-Mascheroni constant.
\end{proposition}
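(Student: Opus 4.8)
The plan is to derive both identities from a single generating-function computation, treating \emph{(i)} and \emph{(ii)} as the even and odd instances of one mechanism. First I would insert the defining series $\zeta(2m,a)=\sum_{n\ge 0}(n+a)^{-2m}$ into the left-hand side of \emph{(i)}, interchange the two summations, set $x=t/(n+a)$, and reindex the inner sum by $\ell=m+i$. The $m$-sum then collapses to the standard logarithmic series
\begin{equation*}
\sum_{\ell=1}^{\infty}\frac{x^{2\ell}}{\ell}=-\log\!\left(1-x^{2}\right),
\end{equation*}
so that, after restoring the factor $(n+a)^{2i}$, the left-hand side takes the crucial intermediate form
\begin{equation*}
-\sum_{n=0}^{\infty}(n+a)^{2i}\Big[\log(n+a-t)+\log(n+a+t)-2\log(n+a)\Big],
\end{equation*}
valid formally for $|t|<|a|$.

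Next I would expand the weight via $(n+a)^{2i}=\big((n+a-t)+t\big)^{2i}$ and $(n+a)^{2i}=\big((n+a+t)-t\big)^{2i}$ by the binomial theorem, and identify each resulting sum through the zeta-regularised relation $-\sum_{n\ge 0}(n+b)^{k}\log(n+b)=\zeta'(-k,b)$, i.e. the value at $s=-k$ of $\zeta'(s,b)=-\sum_{n}(n+b)^{-s}\log(n+b)$. Since $(-1)^{2i-k}=(-1)^{k}$, collecting the three groups of terms reproduces exactly the right-hand side of \emph{(i)}, the isolated $-2\zeta'(-2i,a)$ coming from the $-2\log(n+a)$ contribution. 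The case $i=0$ is a useful consistency check: it reduces to $\sum_{m\ge 1}\frac{\zeta(2m,a)}{m}t^{2m}=\log\frac{\Gamma(a-t)\Gamma(a+t)}{\Gamma(a)^{2}}$, which matches the right-hand side through the Lerch formula $\zeta'(0,b)=\log\Gamma(b)-\tfrac12\log(2\pi)$.

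For \emph{(ii)} I would run the same three steps with the odd weight $(n+a)^{2i+1}$. The only structural novelty is the fate of the term $m=0$, whose coefficient is $\zeta(2\cdot 0+1,a)=\zeta(1,a)$, now sitting at the simple pole of the Hurwitz zeta function. Replacing it by its finite part, read off from the Laurent expansion $\zeta(s,a)=\frac{1}{s-1}-\psi(a)+O(s-1)$, introduces precisely the extra digamma contribution $-\frac{t^{2i+2}}{i+1}\big[\psi(2i+2)-\psi(a)+\gamma\big]$ absent from \emph{(i)}; the combination $\psi(2i+2)+\gamma=H_{2i+1}$ arises from the harmonic weight $\frac{1}{m+i+1}$ and is rewritten using \eqref{psiprop}. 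The binomial expansion then proceeds as before, now with $(-1)^{2i+1-k}=-(-1)^{k}$, which is what turns the symmetric combination $\zeta'(-k,a-t)+(-1)^{k}\zeta'(-k,a+t)$ of \emph{(i)} into the antisymmetric one of \emph{(ii)}.

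The main obstacle is making the interchange of summations and the identifications $-\sum_{n}(n+b)^{k}\log(n+b)=\zeta'(-k,b)$ rigorous, since for $m\le 0$ the inner series $\sum_{n}(n+a)^{-2m}$ diverges and the weighted logarithmic sums are only conditionally meaningful. I would handle this by carrying out every manipulation with an auxiliary complex exponent $s$ in a half-plane of absolute convergence and only then continuing analytically to $s=-2i$ (respectively $s=-2i-1$), which is exactly the finite-part and meromorphic-continuation framework already set up via Voros' theorem earlier in the paper. In the even case all the Hurwitz-zeta values entering the computation lie at regular points, so the formal calculation is faithful; in the odd case the reduction unavoidably meets the pole at $s=1$ through the $m=0$ term, and it is precisely the careful treatment there, yielding the finite part $-\psi(a)$, that accounts for the additional digamma term and hence for the different shape of the two identities.
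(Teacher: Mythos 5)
First, a point of reference: the paper does not prove this proposition at all --- it imports both identities verbatim from Choi--Srivastava \cite[p.~258, (64) and (67)]{choisri1}. So any derivation you give is necessarily ``a different route''; the question is whether yours is complete. Your treatment of \emph{(i)} is a sound formal derivation: the interchange, the collapse of the $m$-sum to $-\log(1-x^2)$, the two binomial re-expansions of $(n+a)^{2i}$ about $n+a\mp t$, the sign $(-1)^{2i-k}=(-1)^k$, and the identification of each weighted logarithmic sum with $\zeta'(-k,\cdot)$ all check out, and your $i=0$ consistency test against $\log\Gamma(a-t)+\log\Gamma(a+t)-2\log\Gamma(a)$ is correct.

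The gap is in \emph{(ii)}, and it is quantitative, not merely one of rigor. Write the bracket in the correction term as $\psi(2i+2)-\psi(a)+\gamma=H_{2i+1}-\psi(a)$. Your accounting produces only the $-\psi(a)$ half: excluding the $m=0$ term (equivalently, replacing $\zeta(1,a)$ by its finite part $-\psi(a)$ from $\zeta(s,a)=\frac{1}{s-1}-\psi(a)+O(s-1)$) contributes exactly $+\frac{\psi(a)}{i+1}t^{2i+2}$, which matches the $-\psi(a)$ inside the bracket. But the remaining piece $-\frac{t^{2i+2}}{i+1}H_{2i+1}$ is nowhere generated by the steps you describe; the assertion that it ``arises from the harmonic weight $\frac{1}{m+i+1}$'' is not a computation, and if one runs your three steps literally one gets the right-hand side of \emph{(ii)} \emph{without} the $H_{2i+1}$ term. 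The true source is the failure of term-by-term zeta regularization to commute with the binomial re-expansion in the odd case: the identifications $-\sum_n(n+a-t)^k\log(n+a-t)=\zeta'(-k,a-t)$ regularize sums anchored at $a-t$, while the object being expanded, $-\sum_n(n+a)^{2i+1}\log(n+a\mp t)$, mixes the base points $a$ and $a\mp t$, and the two prescriptions differ by a finite function of $t$. Concretely, in the rigorous version (Taylor-expanding the right-hand side in $t$ via $\partial_a^{\,j}\zeta'(s,a)=\partial_s\big[(-1)^j(s)_j\,\zeta(s+j,a)\big]$), the harmonic number appears from the derivative of the Pochhammer factor $(s)_j$ at the point where its zero at $s=-k$ collides with the pole of $\zeta(s+j,a)$ at $s+j=1$; in case \emph{(i)} these contributions cancel between the $a-t$ and $a+t$ groups by parity, in case \emph{(ii)} they do not. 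Your closing paragraph correctly names analytic continuation in an auxiliary exponent $s$ as the way to make everything rigorous, but that is precisely the step at which the $H_{2i+1}$ term must be exhibited, and it is not. As it stands, the proposal proves \emph{(i)} modulo routine justification, but for \emph{(ii)} it would terminate with an identity missing the $-\frac{t^{2i+2}}{i+1}\big(\psi(2i+2)+\gamma\big)$ contribution.
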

In particular, these will allow us to compute the Weiestrass canonical product $E(\lambda)$ defined by~\eqref{weiestrass} and reduce it to a finite sum of more manageable functions. 
\subsubsection{The case of $\sn{2n+1}$ }\hfill\\

It is now clear that $FP[\zsdim{m}{\lambdas{2n+1}}{2n+1}]=\zsdim{m}{\lambdas{2n+1}}{2n+1}$ from \eqref{fp} and that $c^{2n+1}_{-m}=0$ from $\eqref{Cm}$ for $m\in\N$. From Corollary \ref{voroscorollary} we obtain
\begin{equation}
	\zderdim{0}{2n+1}=\zsderdim{0}{\lambdas{2n+1}}{2n+1}+\sum_{m=1}^{\infty}\zsdim{m}{\lambdas{2n+1}}{2n+1}\dfrac{(\lambdas{2n+1})^m}{m}
	\label{detpre}
\end{equation}
where $\lambdas{2n+1}=n^2$. Differentiating $\zsdim{s}{\lambdas{2n+1}}{2n+1}$ in Theorem \ref{zetarecsol} at $s=0$ yields
\begin{equation}
	\zsderdim{0}{\lambdas{2n+1}}{2n+1}=\sum_{i=1}^{n}2\bar{u}(n,i)\zeta'(-2i) +\log(n^2)
	\label{zetaoddderivative0}
\end{equation}
In view of \eqref{detpre}, and again using the expression for $\zsdim{s}{\lambdas{2n+1}}{2n+1}$ in Theorem \ref{zetarecsol} consider the following tedious yet simple computations
\begin{align*}
	\hspace{5mm}&\sum_{m=1}^{\infty} \zsdim{m}{\lambdas{2n+1}}{2n+1}\dfrac{(\lambdas{2n+1})^m}{m}=\sum_{m=1}^{\infty}\dfrac{n^{2m}}{m}\left[\sum_{i=1}^{n}\bar{u}(n,i)\zeta(2(m-i)) - n^{-2m}\right] && \\
	&=\sum_{m=1}^{\infty}\dfrac{1}{m}\left[\sum_{i=1}^{n}\bar{u}(n,i)n^{2m}\left(\zeta(2(m-i),n+1)+ \sum\limits_{k=1}^{n}k^{-2m+2i}\right) - 1\right]&&\small\text{Proposition \ref{zetaprop} $\mathit{(ii)}$} \\
	&=\sum_{m=1}^{\infty}\dfrac{1}{m}\Bigg[\sum_{i=1}^{n}\bar{u}(n,i)n^{2m}\zeta(2(m-i),n+1) +\sum_{i=1}^{n}\bar{u}(n,i)n^{2i}&& \\ 
	& \hspace{5mm} +n^{2m}\sum\limits_{k=1}^{n-1}k^{-2m}\sum_{i=1}^{n}\bar{u}(n,i)k^{2i} - 1 \Bigg]&& \\
	&=\sum_{m=1}^{\infty}\dfrac{1}{m}\sum_{i=1}^{n}\bar{u}(n,i)n^{2m}\zeta(2(m-i),n+1)&&\small\text{Proposition \ref{cfnevenprop} $\mathit{(iii)}$ and $\mathit{(iv)}$}\\
	&=\sum_{i=1}^{n}\bar{u}(n,i)\sum_{m=1}^{\infty}\dfrac{\zeta(2(m-i),n+1)n^{2m}}{m}&&\\
	&=\sum_{i=1}^{n}\bar{u}(n,i)\sum_{m=1-i}^{\infty}\dfrac{\zeta(2m,n+1)n^{2m+2i}}{m+i}&&\\
	&=\sum_{i=1}^{n}\bar{u}(n,i)\Bigg(\sum_{k=0}^{2i}\binom{2i}{k}\left[ \zeta'(-k)+(-1)^k\zeta'(-k,2n+1)\right]n^{2i-k} &&  \small\underset{a=n+1, t=n}{\text{Proposition \ref{crazyformulas} $\mathit{(i)}$}}\\
	& \hspace{5mm}-2\zeta'(-2i,n+1)\Bigg)&&\small\text{Proposition \ref{zetaprop} $\mathit{(iii)}$}\\
	&=\sum_{i=1}^{n}\bar{u}(n,i)\Bigg(\sum_{k=0}^{2i}\binom{2i}{k}\zeta'(-k)n^{2i-k}\bigg[\left(1+(-1)^k\right)+(-1)^{k}\sum_{j=1}^{2n}j^k\log(j)\bigg]&& \\
	&\hspace{5mm} - 2\zeta'(-2i)-2\sum_{j=1}^{n}j^{2i}\log(j)\Bigg)&&\\
	&=\sum_{i=1}^{n}\bar{u}(n,i)\Bigg(\sum_{k=0}^{i}2\binom{2i}{2k}\zeta'(-2k)n^{2i-2k} +\sum_{j=1}^{2n}\log(j)n^{2i}\sum_{k=0}^{2i}\binom{2i}{k}\left(\frac{-j}{n}\right)^k && \\
	&\hspace{5mm}-2\zeta'(-2i)+\sum_{j=1}^{n}j^{2i}\log(j^2)\Bigg)&&\\
	&=\sum_{i=1}^{n}\bar{u}(n,i)\Bigg(\sum_{k=0}^{i}2\binom{2i}{2k}\zeta'(-2k)n^{2i-2k}-2\zeta'(-2i)+\sum_{j=1}^{2n}\log(j)(n-j)^{2i} &&\\
	&\hspace{5mm}-\sum_{j=1}^{n}j^{2i}\log(j^2)\Bigg)&& \\
	&=\sum_{i=1}^{n}\bar{u}(n,i)\Bigg(\sum_{k=0}^{i}2\binom{2i}{2k}\zeta'(-2k)n^{2i-2k}-2\zeta'(-2i)+\sum_{j=1}^{n-1}\log\left(1-\frac{j^2}{n^2}\right)j^{2i} && \\
	&\hspace{5mm}-\log\left(\frac{n}{2}\right)n^{2i}\Bigg)&& \\	
	&=\sum_{i=1}^{n}\bar{u}(n,i)\sum_{k=0}^{i}2\binom{2i}{2k}\zeta'(-2k)n^{2i-2k}-\sum_{i=1}^{n}2\bar{u}(n,i)\zeta'(-2i)-\log\left(\frac{n}{2}\right) && \small\text{Proposition \ref{cfnevenprop} $\mathit{(iii)}$ and $\mathit{(iv)}$}
\end{align*}	
\begin{thm} Let $n\in\N$. The determinant of the Laplacian $\Delta$ on odd-dimensional spheres $\sn{2n+1}$ satisfies the following identity
	\begin{equation*}
		-\log\left[ \det\left(\sn{2n+1}\right)\right] = \zderdim{0}{2n+1}=\frac{2}{(2n)!}\sum_{k=1}^{n}\zeta'(-2k)[s(2n,2k)+s(2n+1,2k+1)] + \log\left(\frac{n}{\pi}\right)
		\label{zetaodddet}
	\end{equation*}
	where $s(n,k)$ are the Stirling numbers of the first kind as defined in Proposition \ref{stirlingnumbers}.
\end{thm}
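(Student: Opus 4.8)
The plan is to read off the determinant directly by assembling the three ingredients already in place: the Voros-type identity \eqref{detpre}, the value \eqref{zetaoddderivative0} of $\zsderdim{0}{\lambdas{2n+1}}{2n+1}$, and the closed form for $\sum_{m\ge1}\zsdim{m}{\lambdas{2n+1}}{2n+1}(\lambdas{2n+1})^m/m$ obtained in the computation preceding the statement. Substituting the latter two into \eqref{detpre} with $\lambdas{2n+1}=n^2$, the first thing I would observe is that the two copies of $\sum_{i=1}^{n}2\bar{u}(n,i)\zeta'(-2i)$ — one from \eqref{zetaoddderivative0} and one from the closed form — carry opposite signs and cancel exactly. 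What survives is a logarithmic/constant part together with the double sum $2\sum_{i=1}^{n}\bar{u}(n,i)\sum_{k=0}^{i}\binom{2i}{2k}\zeta'(-2k)\,n^{2i-2k}$.

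Next I would tidy up the elementary part. The logarithms combine as $\log(n^2)-\log(n/2)=\log(2n)$. The $k=0$ contribution to the double sum is $2\zeta'(0)\sum_{i=1}^{n}\bar{u}(n,i)n^{2i}$, and here I would invoke the normalization $\sum_{i=1}^{n}\bar{u}(n,i)n^{2i}=1$ (Proposition \ref{cfnevenprop}) together with $\zeta'(0)=-\tfrac12\log(2\pi)$ from Proposition \ref{zetaprop}~\textit{(v)} to reduce it to $2\zeta'(0)=-\log(2\pi)$. Combining, $\log(2n)-\log(2\pi)=\log(n/\pi)$, which accounts for the non-zeta term in the statement.

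It then remains to identify the $k\ge1$ part of the double sum with the Stirling expression, and this is where I expect the real work to lie. After interchanging the order of summation, the inner sum becomes $\sum_{i=k}^{n}\bar{u}(n,i)\binom{2i}{2k}n^{2i-2k}$, which I would recognize as the coefficient of $x^{2k}$ in $\sum_{i=1}^{n}\bar{u}(n,i)(x+n)^{2i}$ (a shift of the argument by $n$, since $\binom{2i}{2k}n^{2i-2k}=[x^{2k}](x+n)^{2i}$ and the terms with $i<k$ drop out). Using that $\sum_{i}\bar{u}(n,i)y^{2i}$ is, up to the normalizing factor $2/(2n)!$, the even central factorial polynomial $\prod_{j=0}^{n-1}(y^2-j^2)$ (Appendix \ref{appendixcfn}), this coefficient equals $\tfrac{2}{(2n)!}[x^{2k}]\prod_{j=0}^{n-1}\big((x+n)^2-j^2\big)$. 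The crucial algebraic simplification is the factorization $\prod_{j=0}^{n-1}\big((x+n)^2-j^2\big)=(x+n)\prod_{\ell=1}^{2n-1}(x+\ell)$, after which the generating function of the Stirling numbers of the first kind gives $[x^{2k}]=s(2n,2k)-n\,s(2n,2k+1)$; the recursion $s(2n+1,2k+1)=s(2n,2k)-2n\,s(2n,2k+1)$ then turns $2\big(s(2n,2k)-n\,s(2n,2k+1)\big)$ into $s(2n,2k)+s(2n+1,2k+1)$. Feeding this back yields exactly $\frac{2}{(2n)!}\sum_{k=1}^{n}\zeta'(-2k)\big[s(2n,2k)+s(2n+1,2k+1)\big]$, completing the proof. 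The main obstacle is precisely this last identification, namely the bridge between the normalized central factorial numbers $\bar{u}(n,i)$ and the Stirling numbers of the first kind (Proposition \ref{stirlingnumbers}); everything else is cancellation and bookkeeping.
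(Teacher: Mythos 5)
Your proposal is correct and follows the paper's own proof essentially step for step: the same cancellation of the $\sum_{i}2\bar{u}(n,i)\zeta'(-2i)$ terms between \eqref{zetaoddderivative0} and the closed form of the infinite sum, the same combination $\log(n^2)-\log(n/2)-\log(2\pi)=\log(n/\pi)$ after extracting the $k=0$ term via $\sum_{i}\bar{u}(n,i)n^{2i}=1$ and $\zeta'(0)=-\tfrac12\log(2\pi)$, and the same interchange of the order of summation reducing everything to the identity of Proposition~\ref{cfnevenprop}~$\mathit{(v)}$. The only genuine difference is local: where the paper simply cites Proposition~\ref{cfnevenprop}~$\mathit{(v)}$ (proved in Appendix~\ref{appendixcfn} from Proposition~\ref{cfnprop}~$\mathit{(vii)}$), you re-derive it on the spot from Lemma~\ref{cfnevenlemma} via the factorization $\prod_{j=0}^{n-1}\bigl((x+n)^2-j^2\bigr)=(x+n)\prod_{\ell=1}^{2n-1}(x+\ell)$, the generating function \eqref{stirlingnumbers}, and the Stirling recursion $s(2n+1,2k+1)=s(2n,2k)-2n\,s(2n,2k+1)$ --- a correct and arguably more transparent route to the same bridge identity.
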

\begin{proof}	
	Using \eqref{detpre}, \eqref{zetaoddderivative0} and the previous result we obtain
	\begin{align*}
		\zderdim{0}{2n+1}&=\sum_{i=1}^{n}2\bar{u}(n,i)\sum_{k=0}^{i}\binom{2i}{2k}\zeta'(-2k)n^{2i-2k}+\log\left(2n\right) \\
		&=\sum_{i=1}^{n}2\bar{u}(n,i)\sum_{k=1}^{i}\binom{2i}{2k}\zeta'(-2k)n^{2i-2k}+2\zeta'(0)\sum_{i=1}^{n}\bar{u}(n,i)n^{2i}+\log\left(2n\right) \\
		&=\sum_{i=1}^{n}2\bar{u}(n,i)\sum_{k=1}^{i}\binom{2i}{2k}\zeta'(-2k)n^{2i-2k}+\log\left(\frac{n}{\pi}\right) \\			&=\sum_{k=1}^{n}2\zeta'(-2k)\sum_{i=k}^{n}\binom{2i}{2k}\bar{u}(n,i)n^{2i-2k}+\log\left(\frac{n}{\pi}\right) \\
		&=\dfrac{2}{(2n)!}\sum_{k=1}^{n}\zeta'(-2k)[s(2n,2k)+s(2n+1,2k+1)] + \log\left(\frac{n}{\pi}\right)	
	\end{align*}
	where the second step follows by Proposition \ref{zetaprop} $\mathit{(v)}$ and Proposition \ref{cfnevenprop} $\mathit{(iii)}$ and the last step follows from Proposition \ref{cfnevenprop} $\mathit{(v)}$.
\end{proof}

\subsubsection{The case of $\sn{2n}$ }\hfill\\

From Corollary \ref{voroscorollary} together with the previous insights we obtain
\begin{align}
	\zderdim{0}{2n}&=\zsderdim{0}{\lambdas{2n}}{2n} +\sum_{m=1}^{n}FP\left[\zsderdim{0}{\lambdas{2n}}{2n}\right]\dfrac{(\lambdas{2n})^m}{m}+\sum_{m=2}^{n}c^{2n}_{-m}H_{m-1}\dfrac{(\lambdas{2n})^m}{m!}\nonumber\\
	&\hspace{5mm}+\sum_{m=n+1}^{\infty} \zsderdim{m}{\lambdas{2n}}{2n}\dfrac{(\lambdas{2n})^m}{m}
	\label{detpre2}
\end{align}
where $\lambdas{2n}=\left(\frac{2n-1}{2}\right)^2$.
Unlike the odd-dimensional case, the components of \eqref{detpre2} need further care. Differentiating $\zsderdim{s}{\lambdas{2n}}{2n}$ in Theorem \ref{zetarecsol} at $s=0$ yields
\begin{equation}
	\zsderdim{0}{\lambdas{2n}}{2n}=\sum_{i=1}^{n}\bar{v}(n,i)\big(\log(4)\zeta(-2i+1)+(2-2^{2i})\zeta'(-2i+1)\big)-2\log\left(\frac{2}{2n-1}\right)
	\label{deteven}
\end{equation}
Consider the coefficients $c^{2n}_{-m}$ defined by~\eqref{Cm} and note that, using Theorem \ref{zetarecsol}, we obtain
\begin{align*}
	c^{2n}_{-m}&= \res\left(\zsdim{s}{\lambdas{2n}}{2n},m\right)\Gamma(m) \\
	&=\big(4^m-2^{2m-1}\big)\bar{v}(n,m)\res\big(\zeta(2s-2m+1),m\big)(m-1)!\\
	&=2^{2m-1}\bar{v}(n,m)\res\big(\zeta(2s-1),1\big)(m-1)!\\
	&=2^{2m-2}\bar{v}(n,m)(m-1)!
\end{align*}
which leads to, in view of~\eqref{detpre2},
\begin{equation}
	\sum_{m=2}^{n}c^{2n}_{-m}H_{m-1}\frac{(\lambdas{2n})^m}{m!}=\sum_{m=2}^{n}\bar{v}(n,m)\frac{(2n-1)^{2m}}{4m}H_{m-1}
	\label{Cmeven}
\end{equation}
Furthermore, consider the definition of the finite part as given by~\eqref{fp} and note that, using Theorem \ref{zetarecsol}, we obtain
\begin{align*}
	FP[\zsdim{m}{\lambdas{2n}}{2n}]&=\sum_{i=1}^{n}\bar{v}(n,i)\big(4^m-2^{2i-1}\big)FP[\zeta(2m-2i+1)] - \left(\frac{2}{2n-1}\right)^{2m}\\
	&=\sum_{\substack{i=1\\i\neq m}}^{n}\underbrace{\bar{v}(n,i)\big(4^m-2^{2i-1}\big)\zeta(2m-2i+1)}_{f_i(m)} - \left(\frac{2}{2n-1}\right)^{2m}\\
	&\quad + \lim_{\epsilon\rightarrow0}\left\{f_m(m+\epsilon) + \frac{\res(f_m,m)}{\epsilon}\right\}
\end{align*}
Using the fact that $\zeta(s)$ satisfies (see \cite[pp. 219]{eulergamma})
\begin{equation*}
	\lim_{\epsilon\rightarrow0}\left\{\zeta(1+\epsilon) - \frac{1}{\epsilon}\right\}=\gamma
	\label{zetalimit}
\end{equation*}
we see that the last term in the previous equation simplifies as follows
\begin{align*}
	\lim_{\epsilon\rightarrow0}\left\{f_m(m+\epsilon) - \frac{\res(f_m,m)}{\epsilon}\right\}&=\lim_{\epsilon\rightarrow0}\left\{ \bar{v}(n,m)\big(2^{2m+2\epsilon}-2^{2m-1}\big)\zeta(1+2\epsilon)-\frac{\bar{v}(n,m)2^{2m-1}}{2\epsilon}\right\}\\
	&=\bar{v}(n,m)2^{2m-1}\lim_{\epsilon\rightarrow0}\left\{ \big(2^{2\epsilon+1}-1\big)\left(\zeta(1+2\epsilon)-\frac{1}{2\epsilon}\right)+\frac{2^{2\epsilon}-1}{\epsilon}\right\}\\
	&=\bar{v}(n,m)2^{2m-1}\big(\gamma+\log(4)\big)
\end{align*}

Consider now, in virtue of~\eqref{detpre2}, the following 
\begin{align*}
	\sum_{m=n+1}^{\infty}\zsdim{m}{\lambdas{2n}}{2n}\frac{(\lambdas{2n})^m}{m}+& \sum_{m=1}^{n}FP[\zsdim{m}{\lambdas{2n}}{2n}]\frac{(\lambdas{2n})^m}{m}=\\
	&=\sum_{m=1}^{\infty}\frac{1}{m}\left[\sum_{\substack{i=1\\i\neq m}}^{n}\bar{v}(n,i)\big(4^m-2^{2i-1}\big)\zeta(2m-2i+1)\left(\frac{2n-1}{2}\right)^{2m}-1\right]\\
	&\hspace{5mm}+ \sum_{i=1}^{n}\bar{v}(n,i)\frac{(2n-1)^{2i}}{2i}\big(\gamma+\log(4)\big)
\end{align*}
We will now handle the infinite sum in the last equation by means of a procedure similar to that used in the odd-dimensional case.
\begin{align*}
	\sum_{m=1}^{\infty}\frac{1}{m}&\left[\sum_{\substack{i=1\\i\neq m}}^{n}\bar{v}(n,i)\big(4^m-2^{2i-1}\big)\zeta(2m-2i+1)\left(\frac{2n-1}{2}\right)^{2m} - 1\right]\\
	&=\sum_{m=1}^{\infty}\frac{1}{m}\Bigg(\sum_{\substack{i=1\\i\neq m}}^{n}\bar{v}(n,i)\zeta(2m-2i+1)(2n-1)^{2m} \\ 
	&\hspace{5mm} -2^{2i-1} \sum_{\substack{i=1\\i\neq m}}^{n}\bar{v}(n,i)\zeta(2m-2i+1)\left(\frac{2n-1}{2}\right)^{2m}- 1\Bigg)\\
	&=\sum_{m=1}^{\infty}\frac{1}{m}\Bigg(\sum_{\substack{i=1\\i\neq m}}^{n}\bar{v}(n,i)\zeta(2m-2i+1,2n)(2n-1)^{2m}\\ 
	&\hspace{5mm} -2^{2i-1}  \sum_{\substack{i=1\\i\neq m}}^{n}\bar{v}(n,i)\zeta(2m-2i+1,n)\left(\frac{2n-1}{2}\right)^{2m}\Bigg) \\ 
	&\hspace{10mm}+\sum_{m=1}^{\infty}\frac{1}{m}\Bigg[\sum_{\substack{i=1\\i\neq m}}^{n}\bar{v}(n,i)(2n-1)^{2m}\left(\sum_{k=1}^{n-1}(2k)^{-2m+2i-1}-\sum_{k=1}^{2n-1}k^{-2m+2i-1}\right)-1\Bigg]\\
	&=\sum_{i=0}^{n-1}\bar{v}(n,i+1)\Bigg(\sum_{\substack{m=-i\\m\neq 0}}^{\infty}\frac{\zeta(2m+1,2n)}{m+i+1}(2n-1)^{2m+2i+2}\\ 
	&\hspace{5mm} -2^{2i+1}  \sum_{\substack{m=-i\\m\neq 0}}^{\infty}\frac{\zeta(2m+1,n)}{m+i+1}\left(\frac{2n-1}{2}\right)^{2m+2i+2}\Bigg) \\ 
	&\hspace{10mm}+\sum_{m=1}^{\infty}\frac{1}{m}\Bigg[(2n-1)^{2m}\sum_{k=1}^{n-1}(2k-1)^{-2m}\sum_{i=1}^{n}\bar{v}(n,i)(2k-1)^{2i-1} \\
	&\hspace{15mm}+\sum_{i=1}^{n}\bar{v}(n,i)(2n-1)^{2i-1}-1\Bigg] -\left(\frac{H_{n-1}}{2}-H_{2n-1}\right)\sum_{i=1}^{n}\bar{v}(n,i)\frac{(2n-1)^{2i}}{i}\\
	&=\sum_{i=0}^{n-1}\bar{v}(n,i+1)\Bigg(\sum_{\substack{m=-i\\m\neq 0}}^{\infty}\frac{\zeta(2m+1,2n)}{m+i+1}(2n-1)^{2m+2i+2}\\ 
	&\hspace*{5mm} -2^{2i+1}  \sum_{\substack{m=-i\\m\neq 0}}^{\infty}\frac{\zeta(2m+1,n)}{m+i+1}\left(\frac{2n-1}{2}\right)^{2m+2i+2}\Bigg)\eqskip
	&\hspace*{10mm}-\left(\frac{H_{n-1}}{2}-H_{2n-1}\right)\sum_{i=1}^{n}\bar{v}(n,i)\frac{(2n-1)^{2i}}{i}
\end{align*}
where the second step follows from Proposition~\ref{zetaprop} $\mathit{(ii)}$ and, in the last step, we used Proposition~\ref{cfnoddprop} $\mathit{(iii)}$ and $\mathit{(iv)}$ to
conclude the terms inside the square brackets vanish. In order to simplify the two infinite sums inside the parenthesis, consider the second equation in
Proposition~\ref{crazyformulas} $\mathit{(ii)}$ applied twice ($a=2n$, $t=2n-1$ and $a=n$, $t=\frac{2n-1}{2}$ resp.) to obtain
\begin{align*}
	\sum_{\substack{m=-i\\m\neq 0}}^{\infty}\frac{\zeta(2m+1,2n)}{m+i+1}&(2n-1)^{2m+2i+2}\; -2^{2i-1} \sum_{\substack{m=-i\\m\neq 0}}^{\infty}
	\frac{\zeta(2m+1,n)}{m+i+1}\left(\frac{2n-1}{2}\right)^{2m+2i+2}\\
	&=\sum_{k=0}^{2i+1}\binom{2i+1}{k}\Big[\zeta'(-k)+(-1)^{k+1}\zeta'(-k,4n-1)\Big](2n-1)^{2i+1-k}\\
	&\hspace{4mm}- \frac{(2n-1)^{2i+2}}{i+1}\big(\psi(2i+2)-\psi(2n)+\gamma\big)-2\zeta'(-2i-1,2n)\\
	&\hspace{8mm}-2^{2i+1}\Bigg(\sum_{k=0}^{2i+1}\binom{2i+1}{k}\Bigg[\zeta'\left(-k,\frac{1}{2}\right)+(-1)^{k+1}\zeta'\left(-k,2n-\frac{1}{2}\right)\Bigg]\\
	&\hspace{12mm}\times\left(\frac{2n-1}{2}\right)^{2i+1-k}-\frac{(\frac{2n-1}{2})^{2i+2}}{i+1}\big(\psi(2i+2)-\psi(n)+\gamma\big)-2\zeta'(-2i-1,n)\Bigg)\\
	&=\sum_{k=0}^{i}2\binom{2i+1}{2k+1}(2n-1)^{2i-2k}\big(2^{2k+1}\zeta'(-2k-1)-\log(2)\zeta(-2k-1)\big) \\
	&\hspace{4mm} + \sum_{j=1}^{2n-1}\log\left(\frac{2j-1}{2}\right)(2n-2j)^{2i+1}-\sum_{j=1}^{4n-2}\log(j)(2n-j-1)^{2i+1}\\
	&\hspace{8mm}+\frac{(2n-1)^{2i+2}}{i+1}\left( \frac{\psi(2i+2)}{2} -\psi(2n) +\frac{\psi(n)}{2} + \frac{\gamma}{2} \right)\\
	&\hspace{12mm}+(2^{2i+2}-2)\zeta'(-2i-1) + 2\left(\sum_{j=1}^{n-1}\log(j)(2j)^{2i+1}-\sum_{j=1}^{2n-1}\log(j)j^{2i+1}\right)
\end{align*}
where Proposition~\ref{zetaprop} $\mathit{(iii)}$ was used in the last step.
\begin{thm}
	Let $n\in\N$. The determinant of the Laplacian on even-dimensional spheres satisfies the following identity
	\begin{align*}
		-\log\left[ \det\left(\sn{2n}\right)\right] = \zderdim{0}{2n}&=\frac{2}{(2n-1)!}\sum_{k=1}^{n}\zeta'(-2k+1)\big(s(2n-1,2k-1)+s(2n,2k)\big)\\
		&\hspace{16mm}+\sum_{i=1}^{n}\bar{v}(n,i)\frac{(2n-1)^{2i}}{2i}\left(\frac{H_{i-1}}{2}-H_{2i-1}\right)+\log(2n-1)
	\end{align*}
 where $s(n,k)$ are the Stirling numbers of the first kind as defined by equation~\eqref{stirlingnumbers}, and $\bar{v}(n,k)$ are as defined by formula \eqref{cfnoddnorm}.
 \label{zetaevendet}
\end{thm}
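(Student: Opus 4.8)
The plan is to follow exactly the blueprint of the odd-dimensional case, assembling all the ingredients already computed into the master formula \eqref{detpre2} and then collapsing the resulting derivative terms into Stirling numbers. Concretely, I would substitute four pieces into \eqref{detpre2}: the value $\zsderdim{0}{\lambdas{2n}}{2n}$ from \eqref{deteven}, the residue contribution \eqref{Cmeven}, the finite-part correction $\sum_{i}\bar{v}(n,i)\frac{(2n-1)^{2i}}{2i}(\gamma+\log 4)$ produced just above, and the large double sum whose simplification via Proposition \ref{crazyformulas} $\mathit{(ii)}$ (applied twice) has just been carried out. The outcome is a single expression whose constituents are values of $\zeta'$ at negative odd integers, values of $\zeta$ at negative odd integers, harmonic numbers and digamma values, and a collection of logarithmic sums.

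Next I would sort the result by type of term. The derivative terms, which are values of $\zeta'$ at negative odd integers, appear weighted by binomial--central-factorial combinations of the form $\sum_{i}\bar{v}(n,i)\binom{2i+1}{2k+1}(2n-1)^{2i-2k}$ (after the reindexing $i\mapsto i+1$). This is precisely the combination that Proposition \ref{cfnoddprop} $\mathit{(v)}$ rewrites in terms of $s(2n-1,2k-1)+s(2n,2k)$, yielding the leading sum $\frac{2}{(2n-1)!}\sum_{k}\zeta'(-2k+1)\big(s(2n-1,2k-1)+s(2n,2k)\big)$. This step is the exact analogue of the application of Proposition \ref{cfnevenprop} $\mathit{(v)}$ at the close of the odd-dimensional case, and it is where the central factorial number machinery of Appendix \ref{appendixcfn} does the real work.

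The remaining terms must then be shown to collapse to $\sum_{i}\bar{v}(n,i)\frac{(2n-1)^{2i}}{2i}(\frac{H_{i-1}}{2}-H_{2i-1})+\log(2n-1)$. Here I would use \eqref{psiprop} to replace $\psi(2i+2)$, $\psi(n)$ and $\psi(2n)$ by harmonic numbers plus $\gamma$, and then verify that every occurrence of $\gamma$ cancels: the $\gamma$ from the finite-part correction against the $\gamma$'s produced by the digamma substitutions. The non-derivative contributions $\zeta(-2k+1)$ (coming from the $\log(4)\zeta(-2i+1)$ term of \eqref{deteven} and from the $-\log 2\,\zeta(-2k-1)$ terms of the big computation) should likewise cancel against one another, since the final formula contains no such values; this is enforced by the central factorial identities in Proposition \ref{cfnoddprop} $\mathit{(iii)}$ and $\mathit{(iv)}$, exactly as the spurious polynomial pieces were eliminated in the odd case. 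Finally, the various logarithmic sums involving $\log(2j-1)$, $\log(j)$ and $\log(\frac{2j-1}{2})$ telescope, leaving only the single term $\log(2n-1)$.

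I expect the main obstacle to be the bookkeeping in this last paragraph. The even case genuinely carries more moving parts than the odd one -- digamma values, a residue/finite-part splitting, and two separate applications of Proposition \ref{crazyformulas} $\mathit{(ii)}$ -- so the delicate point is confirming that the $\gamma$'s, the digamma-induced harmonic numbers, and the non-derivative zeta values recombine cleanly, so that only the harmonic-number weight $\frac{H_{i-1}}{2}-H_{2i-1}$ and the clean logarithm survive. Once those cancellations are checked, the identification of the $\zeta'$-weights with Stirling numbers via Proposition \ref{cfnoddprop} $\mathit{(v)}$ is purely formal and completes the proof.
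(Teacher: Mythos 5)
Your overall architecture matches the paper's proof exactly: assemble \eqref{deteven}, \eqref{Cmeven}, the finite-part correction and the doubly-applied Proposition~\ref{crazyformulas}~$\mathit{(ii)}$ computation into \eqref{detpre2}, convert digammas to harmonic numbers via \eqref{psiprop}, and identify the $\zeta'$-weights with Stirling numbers through Proposition~\ref{cfnoddprop}~$\mathit{(v)}$. Those steps are all correct and are precisely what the paper does.

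There is, however, one concrete misstep in the part you yourself flag as delicate. You claim that the non-derivative values $\zeta(-2k+1)$ cancel ``enforced by the central factorial identities in Proposition~\ref{cfnoddprop}~$\mathit{(iii)}$ and $\mathit{(iv)}$, exactly as in the odd case,'' and that the logarithmic sums ``telescope, leaving only $\log(2n-1)$.'' Neither is quite right. The log sums do not telescope cleanly: Proposition~\ref{logevenprop}~$\mathit{(ii)}$ leaves behind a residual power sum $-\log(4)\sum_{j=1}^{n-1}(2j)^{2i-1}$, and it is this remainder, together with the terms $\log(4)\zeta(-2i+1)$, $-\log(4)\sum_{k}\binom{2i-1}{2k-1}(2n-1)^{2i-2k}\zeta(-2k+1)$ and $\tfrac{(2n-1)^{2i}}{2i}\log(4)$, that must vanish. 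Proposition~\ref{cfnoddprop}~$\mathit{(iii)}$/$\mathit{(iv)}$ cannot do this: those identities annihilate sums of $\bar{v}(n,i)$ against \emph{odd} arguments $(2j-1)^{2i-1}$, whereas the surviving power sums here have even arguments $(2j)^{2i-1}$. The paper instead invokes Proposition~\ref{zetaprop}~$\mathit{(vi)}$ and $\mathit{(vii)}$ --- writing $\zeta(-2k+1)$ in terms of Bernoulli numbers and using Faulhaber's formula to match the binomial--Bernoulli combination against the power sum, an identity that holds for each fixed $i$ separately. This is a genuinely different mechanism with no analogue in the odd-dimensional case (where no $\zeta$ values at negative integers appear at all), so as written your plan for this cancellation would not go through; the rest of the proposal is sound.
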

\begin{proof}	Using~\eqref{deteven},~\eqref{Cmeven} and the previous result together with~\eqref{detpre2} we obtain
	\begin{align*}
		\zderdim{0}{2n}&=\sum_{i=1}^{n}\bar{v}(n,i)\Bigg[\frac{(2n-1)^{2i}}{4i}H_{i-1} +\frac{(2n-1)^{2i}}{2i}\big(\gamma+\log(4)\big) \eqskip
		&\hspace{5mm}+\sum_{k=1}^{i}2\binom{2i-1}{2k-1}(2n-1)^{2i-2k}\big(2^{2k-1}\zeta'(-2k+1)-\log(2)\zeta(-2k+1)\big)\eqskip
		&\hspace{10mm} +\frac{(2n-1)^{2i+2}}{i+1}\left( \frac{\psi(2i+2)}{2} -\psi(2n) +\frac{\psi(n)}{2} + \frac{\gamma}{2} \right)\eqskip
		&\hspace{15mm}-\left(\frac{H_{n-1}}{2}-H_{2n-1}\right)\frac{(2n-1)^{2i}}{i}\eqskip
		&\hspace{20mm}+\log(4)\zeta(-2i+1)+(2-2^{2i})\zeta'(-2i+1)\eqskip
		&\hspace{25mm} +\sum_{j=1}^{2n-1}\log\left(\frac{2j-1}{2}\right)(2n-2j)^{2i+1}-\sum_{j=1}^{4n-2}\log(j)(2n-j-1)^{2i+1}  \eqskip
		&\hspace{30mm}+2\sum_{j=1}^{n-1}\log(j)(2j)^{2i+1}-2\sum_{j=1}^{2n-1}\log(j)j^{2i+1}\Bigg]-2\log\left(\frac{2}{2n-1}\right)\eqskip
		&=\sum_{i=1}^{n}\bar{v}(n,i)\Bigg[\log(4)\zeta(-2i+1)+\frac{(2n-1)^{2i}}{2i}\left(\log(4)+H_{2i-1}-\frac{H_{i-1}}{2}\right)\eqskip
		&\hspace{5mm}+ \sum_{k=1}^{i}2\binom{2i-1}{2k-1}(2n-1)^{2i-2k}\big(2^{2k-1}\zeta'(-2k+1)-\log(2)\zeta(-2k+1)\big) \eqskip
		&\hspace{10mm}-\log(4)\sum_{j=1}^{n-1}(2j)^{2i-1}\Bigg]-\log\left(\frac{2n-1}{2}\right)\eqskip
		&=\frac{2}{(2n-1)!}\sum_{k=1}^{n}\zeta'(-2k+1)\big(s(2n-1,2k-1)+s(2n,2k)\big)\eqskip
		&\hspace{5mm}+\sum_{i=1}^{n}\bar{v}(n,i)\Bigg[-\log(4)\sum_{k=1}^{i}\binom{2i-1}{2k-1}(2n-1)^{2i-2k}\zeta(-2k+1)+\log(4)\zeta(-2i+1)\eqskip
		&\hspace{10mm}+\frac{(2n-1)^{2i}}{2i}\left(\log(4)+H_{2i-1}-\frac{H_{i-1}}{2}\right)-\log(4)\sum_{j=1}^{n-1}(2j)^{2i-1}\Bigg]+\log\left(\frac{2n-1}{2}\right)\eqskip
		&=\frac{2}{(2n-1)!}\sum_{k=1}^{n}\zeta'(-2k+1)\big(s(2n-1,2k-1)+s(2n,2k)\big)\eqskip
		&\hspace{5mm}+\sum_{i=1}^{n}\bar{v}(n,i)\frac{(2n-1)^{2i}}{2i}\left(\frac{H_{i-1}}{2}-H_{2i-1}\right)+\log(2n-1)
	\end{align*}
	where, in the second step, both \eqref{psiprop} and Proposition \ref{logevenprop} $\mathit{(i)}$ and $\mathit{(ii)}$ are used and, in the third step, we employ Proposition  \ref{cfnoddprop} $\mathit{(v)}$ and finally, in the last step, we use Proposition \ref{zetaprop} $\mathit{(vi)}$ and $\mathit{(vii)}$.
\end{proof}

Using the expressions for the determinant obtained in Theorem \ref{zetaodddet} and Theorem \ref{zetaevendet} we present the first few values explicitly and numerically in the following corollary.
\begin{corollary}\label{lowdimSn}
	The determinant of the Laplacian on $\sn{n}\; (n=2,3,4,5,6,7,8,9)$ are given by
	\begin{align*}
		\det\left(\mathbb{S}^{2}\right)&=e^{\frac{1}{6}}A^4\\ 
		&=3.19531\dots\\
		\det\left(\mathbb{S}^{3}\right)&=\pi\exp\left[\frac{\zeta(3)}{2\pi^2}\right]\\
		&=3.33885\dots\\
		\det\left(\mathbb{S}^{4}\right)&=\frac{1}{3}\exp\left[\frac{83}{144}-\frac{2\zeta'(-3)}{3}\right]A^\frac{13}{3}\\
		&=1.73694\dots\\
		\det\left(\mathbb{S}^{5}\right)&=\frac{\pi}{2}\exp\left[\frac{23\zeta(3)}{24\pi^2}-\frac{\zeta(5)}{8\pi^4}\right]\\
		&=1.76292\dots\\
		\det\left(\mathbb{S}^{6}\right)&=\frac{1}{5}\exp\left[\frac{1381}{2160}-2\zeta'(-3)-\frac{\zeta'(-5)}{30}\right]A^\frac{149}{30}\\
		&=1.29002\dots\\
		\det\left(\mathbb{S}^{7}\right)&=\frac{\pi}{3}\exp\left[\frac{949\zeta(3)}{720\pi^2}-\frac{13\zeta(5)}{24\pi^4}+\frac{\zeta(7)}{32\pi^6}\right]\\
		&=1.22252...\\
		\det\left(\mathbb{S}^{8}\right)&=\frac{1}{7}\exp\left[\frac{4730849}{7257600}-\frac{1199\zeta'(-3)}{360}-\frac{71\zeta'(-5)}{360}-\frac{\zeta'(-7)}{1260}\right]A^\frac{383}{70}\\
		&=1.05041\dots\\
		\det\left(\mathbb{S}^{9}\right)&=\frac{\pi}{4}\exp\left[\frac{16399\zeta(3)}{10080\pi^2}-\frac{2087\zeta(5)}{1920\pi^4}+\frac{31\zeta(7)}{128\pi^6}-\frac{\zeta(9)}{128\pi^8}\right]\\
		&=0.94673\dots
	\end{align*}
	where $A:=e^{\frac{1}{12}-\zeta'(-1)}$ is the Glaisher-Kinkelin's constant.
\end{corollary}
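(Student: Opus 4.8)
The plan is to prove the corollary by directly specializing the two closed-form expressions already established, namely Theorem~\ref{zetaodddet} for odd-dimensional spheres and Theorem~\ref{zetaevendet} for even-dimensional ones, to the values $n=1,2,3,4$, and then exponentiating via $\det(\sn{m})=\exp[-\zderdim{0}{m}]$. No new machinery is required; the corollary is essentially a worked example exhibiting the two master formulae.

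First, for the odd-dimensional spheres $\sn{3},\sn{5},\sn{7},\sn{9}$ (that is, $2n+1$ with $n=1,2,3,4$), I would substitute each $n$ into
\[
\zderdim{0}{2n+1}=\frac{2}{(2n)!}\sum_{k=1}^{n}\zeta'(-2k)\big[s(2n,2k)+s(2n+1,2k+1)\big]+\log\Big(\frac{n}{\pi}\Big),
\]
reading off the required signed Stirling numbers of the first kind from their defining recursion. The key analytic input is the functional equation of the Riemann zeta function, which at the negative even integers yields
\[
\zeta'(-2k)=(-1)^{k}\,\frac{(2k)!}{2\,(2\pi)^{2k}}\,\zeta(2k+1),
\]
turning every $\zeta'(-2k)$ into a rational multiple of $\zeta(2k+1)/\pi^{2k}$. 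Collecting these and exponentiating produces expressions of the form $\tfrac{\pi}{n}\exp[\cdots]$ with the bracket a finite rational combination of $\zeta(3)/\pi^{2},\zeta(5)/\pi^{4},\dots$, matching the stated values; one checks that $n=1$ already recovers $\det(\sn{3})=\pi\exp[\zeta(3)/(2\pi^{2})]$.

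For the even-dimensional spheres $\sn{2},\sn{4},\sn{6},\sn{8}$ (that is, $2n$ with $n=1,2,3,4$) the procedure is analogous but uses Theorem~\ref{zetaevendet}. Three further ingredients enter here: the central factorial coefficients $\bar{v}(n,i)$ of~\eqref{cfnoddnorm}, evaluated for these small $n$; the rational values $\zeta(-2k+1)=-\tfrac{B_{2k}}{2k}$ from Proposition~\ref{zetaprop}~$\mathit{(vi)}$, together with the harmonic-number contributions $\tfrac{H_{i-1}}{2}-H_{2i-1}$, which combine into the large rational constants appearing in each exponent (for instance $\tfrac{83}{144}$ for $\sn{4}$); and the Glaisher--Kinkelin identity $\zeta'(-1)=\tfrac{1}{12}-\log A$, which is precisely what introduces the factor $A$ into the final answer. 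The remaining derivatives $\zeta'(-3),\zeta'(-5),\zeta'(-7)$ do not simplify and are kept as such.

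The computations are routine in nature, so the only genuine obstacle is bookkeeping: correctly tabulating the Stirling and central factorial coefficients for each $n$, and carefully assembling the several rational contributions to each constant term. These arise from distinct sources~--~Bernoulli values, harmonic numbers, and the $\log(2\pi)$ hidden in $\zeta'(0)$ via Proposition~\ref{zetaprop}~$\mathit{(v)}$~--~and must be summed without slip. Once Theorems~\ref{zetaodddet} and~\ref{zetaevendet} are in hand, no conceptual difficulty remains.
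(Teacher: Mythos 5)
Your proposal is correct and coincides with the paper's own (implicit) proof: the corollary is obtained exactly by specializing Theorems~\ref{zetaodddet} and~\ref{zetaevendet} to $n=1,2,3,4$, converting $\zeta'(-2k)$ to $(-1)^k\frac{(2k)!}{2(2\pi)^{2k}}\zeta(2k+1)$ via the functional equation in the odd case, and using $\zeta'(-1)=\tfrac{1}{12}-\log A$ together with the Bernoulli values and harmonic-number terms in the even case. The identified ingredients and bookkeeping are precisely what is needed.
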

In Appendix~\ref{numerical} we provide the numerical values for determinants up to dimension $100$ for reference. However, the above formulae allow us to compute
much higher dimensions and in Figure~\ref{gr:sphere1} we show a graph with the values up to dimension 10000. The two distinct sets of points correspond to even and odd
dimensions.
\begin{figure}[!ht]
\includegraphics[scale=0.55]{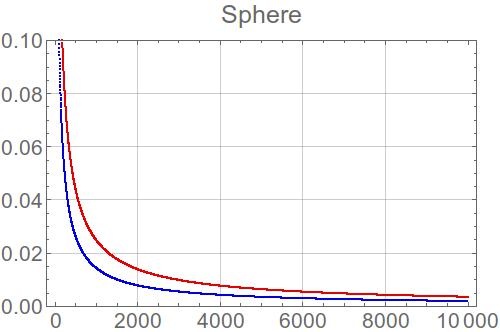}
\caption{\label{gr:sphere1}The first $10\, 000$ values for $\det\left(\sn{n}\right)$ (odd dimensions: blue; even dimensions: red).}
\end{figure}

\subsection{Discrepancies in the numerical values in the literature\label{disc}}
There are some discrepancies in the literature for the values of the determinant of the sphere, namely, between those obtained by Kumagai~\cite{kumagai}
  and Quine and Choi~\cite{quine2}, and those obtained by Choi and Srivastava in~\cite{choisri1}. Commenting upon these discrepancies in~\cite[Section~5.5]{choisri1},
  Choi and Srivastava trace them back to the values used for the coeffiients $c_{-m}$ in these papers, as the differences coincide with some of the values of $c_{-m}$ used.
  They then suggest that the way in which these values are calculated should be examined carefully. The formulae for these coefficients are originally given in
  Voros~\cite[pp. 444]{voros} as in formula~\eqref{Cm} above, and this is what we have used in this paper. We believe that there is a misprint in the simplified
  formula given in~\cite[equation~(3.3)]{voros}, and that the correct formula for positive $m$ should read as
  \[
   c_{-m} = (-1)^m{\ds \lim_{s\to m}} \fr{ \zeta_{\mathfrak{T},M}(s)}{\Gamma(1-s)}.
  \]
To see this, note that following the usual procedure using the Mellin transform of the partition function associated with an eigenvaue sequence $\lambda_{k}$,
namely,
\[
	\Theta(t) = \dsum_{k=0}^{\infty} e^{-t\lambda_k},
\]
we obtain
\[
\begin{array}{lll}
 \zeta_{\mathfrak{T},M}(s) & = & \fr{1}{ \Gamma(s) } \dint_{0}^{+\infty} t^{s-1} \Theta(t)\,{\rm d}t\eqskip
 & = & \fr{1}{ \Gamma(s) } \dint_{0}^{1} t^{s-1} \Theta(t)\,{\rm d}t + \fr{1}{ \Gamma(s) } \dint_{1}^{+\infty} t^{s-1} \Theta(t)\,{\rm d} t\eqskip
 \end{array}
\]
where the rightmost term is analytic in $s$. Following the notation used in~\cite{voros}, the expansion of the partition function at $0^{+}$ for the
Laplace-Beltrami operator on a compact manifold (with or without boundary) may be written as
\[
\Theta(t) \sim \dsum_{k=0}^{+\infty} c_{i_{k}} t^{i_{k}},
\]
where the indexes $i_{k}$ form an increasing sequence of real numbers growing to infinity -- for more details on the coefficients $c_{i_{k}}$ and $i_{k}$ see, for
instance,~\cite{grei} and the references therein. We now sketch the remaining part of the argument, which still
follows a standard approach. Writing the above as
\[
\begin{array}{lll}
 \zeta_{\mathfrak{T},M}(s) & = & \fr{1}{ \Gamma(s) } \dint_{0}^{1} t^{s-1} \left[\Theta(t) - \dsum_{k=0}^{p} c_{i_{k}} t^{i_{k}}\right]\,{\rm d}t + 
 \fr{1}{ \Gamma(s) } \dint_{0}^{1} t^{s-1} \dsum_{k=0}^{p} c_{i_{k}} t^{i_{k}}\,{\rm d}t \eqskip
 & & \hspace*{5mm} +\fr{1}{ \Gamma(s) } \dint_{1}^{+\infty} t^{s-1} \Theta(t)\,{\rm d} t\eqskip
 & = &  \fr{1}{ \Gamma(s) } \dsum_{k=0}^{p} \fr{c_{i_{k}}}{s+i_{k}} + \fr{1}{ \Gamma(s) }F(s),
\end{array}
\]
where $p$ is such that all the singular terms have been incorporated into the finite sum and $F$ is a function analytic for $\re(s)>0$. The above is valid for $\re(s)>\mu$ and, by analytic
continuation, also for $0<\re(s)<\mu$, with the exception of the poles at the points $s=-i_{k}$.
Dividing now both sides by $\Gamma(1-s)$ and using Euler's reflection formula we obtain
\[
 \fr{ \zeta_{\mathfrak{T},M}(s) }{\Gamma(1-s)} = \fr{\sin(\pi s)}{\pi} \dsum_{k=0}^{p} \fr{c_{i_{k}}}{s+i_{k}} +  \fr{\sin(\pi s)}{\pi}F(s).
\]
As was pointed out in~\cite{voros}, the function $\zeta_{\mathfrak{T},M}$ has poles at the positive values of $-i_{k}$ and, whenever this equals an integer $m$, we obtain
\[
 {\ds \lim_{s\to m}} \fr{ \zeta_{\mathfrak{T},M}(s) }{\Gamma(1-s)} =  (-1)^{m} c_{-m},
\]
yielding the desired formula.

\section{Application to other examples\label{otherex}}
The spectra of the Laplacian on the $n$-dimensional hemisphere $\snh{n}$ with Dirichlet boundary conditions, and on the $n$-dimensional real projective space $\snrp{n}$ are 
well known~\cite{bang,berger} and quite similar, for geometrical reasons, to the spectra of spheres. This enables us to replicate
the previous steps for these examples and obtain the recursions, and explicit formulae for the zeta functions and determinants which we indicate below. Since the proofs are
analogous to those for the sphere given above, except in the case of hemispheres for which the results are new, we present them without proof. In addition,
we also revisit the case of the quantum harmonic oscillator, already studied in~\cite{frei}, but for which we now present an explicit expression for the corresponding zeta function.

\subsection{Hemispheres}
The spectral determinant of hemispheres with Dirichlet boundary conditions has never, to the best of our knowledge, been analyzed in the literature before. We shall now
show how the method described in the previous section for the case of spheres may be applied in a similar way to this case. Except for the zero eigenvalue, the spectrum of
the Laplacian on the $n-$dimensional hemisphere with Dirichlet boundary conditions is, in fact, the same as that on the $n-$dimensional sphere, namely, of the form $k(k+n-1)$,
with $k$ a positive integeer. The difference between the two lies in the corresponding multiplicities which are now given by
\[
m_{k}^n = \binom{n+k-2}{k-1},
\]
where $k \in \N$ - see \cite{bang}. As in the sphere case, we apply a shift to the spectrum of $\lambdas{n}$ and the corresponding shifted zeta function is thus defined as
\[
\zsh{s}{\lambdas{n}}=\dsum_{k=1}^{\infty} \dfrac{m_k^n}{\left(k+\tfrac{n-1}{2}\right)^{2s}}
\]
with the series being absolutely convergent for $\re(s)> n$. From this it is possible to derive a recurrence relation similar to Lemma~\ref{zetarecursionlemma} which
now reads as
\begin{equation}\label{recurh}
\zsdimh{s}{\lambdas{n+2}}{n+2}=\dfrac{\zsdimh{s-1}{\lambdas{n}}{n}-\zsdimh{s-\frac{1}{2}}{\lambdas{n}}{n}-
	\left(\frac{n^2-1}{4}\right)\zsdimh{s}{\lambdas{n}}{n}}{n(n+1)}
\end{equation}
valid for all positive integers $n$ and complex numbers $s$ in the domain of the functions involved. Furthermore,
\[
\begin{array}{lll}
	\zsdimh{s}{\lambdas{1}}{1} & = & \zeta(2s)\eqskip
	\zsdimh{s}{\lambdas{2}}{2} & = & (2^{2s-1}-1)\zeta(2s-1) - (2^{2s-1}-\tfrac{1}{2})\zeta(2s)
\end{array}
\]
so that $ \zsdimh{s}{\lambdas{n}}{n}$ may be determined for all $n$ by successive applications of the recursion formula. 
\begin{thm}
	Let $n\in\N$. The zeta function $\zsh{s}{\lambdas{n}}$ satisfies the following identities
	\begin{align*}
	&\zsdimh{s}{\lambdas{2n-1}}{2n-1}=\frac{1}{2}\sum_{i=1}^{n}\bar{u}(n-1,i-1)\bigg[\zeta(2s-2i+2)-(n-1)\zeta(2s-2i+3)\bigg] \\
	&\zsdimh{s}{\lambdas{2n}}{2n}=\sum_{i=1}^{n}\bar{v}(n,i)\bigg[ \left(2^{2s-1}-2^{2i-2}\right)\zeta(2s-2i+1) -(2n-1)\left(2^{2s-1}-2^{2i-3}\right)\zeta(2s-2i+2)\bigg]
	\end{align*}
\label{hemi_rec_sol}
\end{thm}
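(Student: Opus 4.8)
The plan is to mirror the inductive argument used for the sphere in Theorem~\ref{zetarecsol}, exploiting that the recursion~\eqref{recurh} couples dimension $n+2$ only to dimension $n$, so that odd and even dimensions may be treated by two independent inductions on $n$; since every expression involved is meromorphic, it suffices to establish the identities on the relevant half-plane of absolute convergence and extend by analytic continuation. For the odd family I would prove the formula for $\zsdimh{s}{\lambdas{2n-1}}{2n-1}$ by induction, the base case $n=1$ being the given initial condition $\zsdimh{s}{\lambdas{1}}{1}=\zeta(2s)$: here the second zeta family is annihilated by the prefactor $-(n-1)=0$, and the first reduces to the claim once the value of $\bar u(0,0)$ fixed in~\eqref{cfnevennorm} is inserted. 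The even family is handled analogously, taking the given expression for $\zsdimh{s}{\lambdas{2}}{2}$ as the base case $n=1$, which matches the claimed formula with the normalization of $\bar v(1,1)$ in~\eqref{cfnoddnorm}.

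For the inductive step I would substitute the induction hypothesis into~\eqref{recurh} (with its running dimension set equal to $2n-1$, resp.\ $2n$) and then collect the resulting terms according to the argument of $\zeta$. The feature distinguishing the hemisphere from the sphere is the half-integer shift appearing in the term $\zsdimh{s-\frac{1}{2}}{\lambdas{n}}{n}$: the substitution $s\mapsto s-\tfrac{1}{2}$ sends each $\zeta(2s-2i+2)$ to $\zeta(2s-2i+1)$ and each $\zeta(2s-2i+3)$ to $\zeta(2s-2i+2)$, so this term interchanges the two zeta families present in the closed form. Because those families are locked together by a fixed prefactor, the interchange maps the ansatz into itself rather than generating ever more families, and after regrouping I expect both the even-argument and the odd-argument coefficients to satisfy the \emph{same} recurrence in the central factorial numbers, namely the $\bar u$ recurrence of Proposition~\ref{cfnevenprop}~$\mathit{(ii)}$ in the odd case (and its $\bar v$-analogue from Proposition~\ref{cfnoddprop} in the even case), with the index shifts of Proposition~\ref{cfnprop}~$\mathit{(iv)}$--$\mathit{(v)}$ absorbing the endpoint terms. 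I note also that, unlike the sphere recursion in Lemma~\ref{zetarecursionlemma}, the recursion~\eqref{recurh} carries no free $k=1$ boundary contribution, which is exactly why the hemisphere closed forms contain no additive correction of the type $-n^{-2s}$.

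The step I expect to demand most care is checking that the dimension-dependent prefactor of the second zeta family reproduces itself correctly; this is forced rather than imposed. In the odd case, with running dimension $2n-1$ the diagonal coefficient in~\eqref{recurh} equals $\tfrac{(2n-1)^2-1}{4}=n(n-1)$, and after substitution and regrouping the coefficient of each $\zeta(2s-2i+1)$ in the numerator picks up $-\tfrac{n-1}{2}$ from $\zsdimh{s-1}{\lambdas{2n-1}}{2n-1}$ (the carried prefactor) and $-\tfrac{1}{2}$ from $-\zsdimh{s-\frac{1}{2}}{\lambdas{2n-1}}{2n-1}$ (the half-shift of the first family), summing to $-\tfrac{n}{2}$. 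One then finds that, at each index $\ell$, the coefficient of $\zeta(2s-2\ell)$ is $\tfrac{1}{2}\bigl[\bar u(n-1,\ell-1)-(n-1)^2\bar u(n-1,\ell)\bigr]$ and the coefficient of $\zeta(2s-2\ell+1)$ is $-\tfrac{n}{2}\bigl[\bar u(n-1,\ell-1)-(n-1)^2\bar u(n-1,\ell)\bigr]$; since the bracket is precisely the numerator of the $\bar u$ recurrence, dividing by $(2n-1)(2n)$ converts it into $\bar u(n,\ell)$ and leaves the ratio $-n$ intact, reproducing the claimed prefactor $-(n-1)\mapsto -n$.

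The even case runs on the same mechanism but requires tracking the explicit factors $2^{2s-1}-2^{2i-2}$ and $2^{2s-1}-2^{2i-3}$. The essential point is that the integer shift $s\mapsto s-1$ multiplies $2^{2s-1}$ by $\tfrac{1}{4}$ whereas the half-integer shift $s\mapsto s-\tfrac{1}{2}$ multiplies it by $\tfrac{1}{2}$; it is precisely this discrepancy, combined with the constant terms $2^{2i-2},2^{2i-3}$, that keeps the two-term structure $(2^{2s-1}-2^{2i-2})\zeta(2s-2i+1)-(2n-1)(2^{2s-1}-2^{2i-3})\zeta(2s-2i+2)$ invariant under~\eqref{recurh} once the $\bar v$ recurrence of Proposition~\ref{cfnoddprop} and the reindexings are applied, with the prefactor $-(2n-1)$ updating to $-(2n+1)$ in the same self-consistent manner as $-(n-1)\mapsto -n$ above. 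As in Theorem~\ref{zetarecsol}, everything beyond this bookkeeping is a direct transcription of the sphere computation with $\bar u,\bar v$ in place of their sphere counterparts.
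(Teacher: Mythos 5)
Your proposal is correct and follows essentially the same route as the paper: a separate induction on $n$ for the odd and even families, substituting the induction hypothesis into the recursion~\eqref{recurh}, observing that the half-integer shift interchanges the two zeta families, and regrouping the coefficients via Proposition~\ref{cfnprop}~$\mathit{(iv)}$--$\mathit{(v)}$ and the recurrences of Propositions~\ref{cfnevenprop}~$\mathit{(ii)}$ and~\ref{cfnoddprop}~$\mathit{(ii)}$. Your explicit bookkeeping (the $-\tfrac{n-1}{2}-\tfrac{1}{2}=-\tfrac{n}{2}$ accounting in the odd case, and the $\tfrac14$ versus $\tfrac12$ rescaling of $2^{2s-1}$ in the even case) checks out and in fact spells out details the paper leaves implicit.
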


\begin{proof}\hfill\\- Odd dimensional case: 	
\item $\blacklozenge$ Induction on $n$.\\
\emph{Induction step}:\\ Following the recursion in \eqref{recurh} for $\zsdim{s}{\lambdas{2n+1}}{2n+1}$ we obtain	
\begin{align*}
	&\zsdimh{s}{\lambdas{2n+1}}{2n+1}=\dfrac{\zsdimh{s-1}{\lambdas{2n-1}}{2n-1}-\zsdimh{s-\frac{1}{2}}{\lambdas{2n-1}}{2n-1}-\left(n(n-1)\right)\zsdimh{s}{\lambdas{2n-1}}{2n-1}}{2n(2n-1)}\\ &=\dfrac{1}{2n(2n-1)}\Bigg[\sum\limits_{i=1}^{n}\bar{u}(n-1,i-1)\left(\zeta(2s-2i)-n\zeta(2s-2i+1)\right)\\
	&\hspace{25mm}-(n-1)^2\sum\limits_{i=1}^{n}\bar{u}(n-1,i-1)\left(\zeta(2s-2i+2)-n\zeta(2s-2i+3)\right)\Bigg]\\
	&=\frac{1}{2}\sum_{i=1}^{n}\bar{u}(n-1,i-1)\bigg[\zeta(2s-2i+2)-(n-1)\zeta(2s-2i+3)\bigg]
\end{align*}
where we use the Induction Hypothesis and Proposition~\ref{cfnprop} $\mathit{(iv)}$, $\mathit{(v)}$ and \ref{cfnevenprop} $\mathit{(ii)}$ in that order.
\\
The even dimensional case can be shown in exactly the same way by means of Proposition \ref{cfnoddprop}.
\end{proof}

For completeness, and since the results for hemispheres are new, we will provide some insight into the proof in spite of its similarities with the case of spheres.
We begin by obtaining the recursion~\eqref{recurh} and its solution $\zsh{s}{\lambdas{n}}$, in terms of the Riemann zeta function as presented in Theorem~\ref{det_hemi}
and in total analogy with Sections 2.3 and 2.4. Then we leverage such expressions together with Voros method, namely Corollary \ref{voroscorollary}, to obtain the associated
determinant. Namely, we obtain that $\det\left(\snh{n}\right)$ satisfies the following identity.
\begin{align}
	-\log\left[ \det\left(\snh{n}\right)\right]=\zderdimh{0}{n}&=\zsderh{0}{\lambdas{n}}+\sum_{m=1}^{\lfloor\mu\rfloor}FP\left[\zsh{m}{\lambdas{n}}]\right]\frac{(\lambdas{n})^m}{m}+\sum_{m=2}^{\lfloor\mu\rfloor}c_{-m}H_{m-1}\frac{(\lambdas{n})^m}{m!}\nonumber \\
	&\hspace{10mm}+\sum_{m=\lfloor\mu\rfloor+1}^{\infty} \zsh{m}{\lambdas{n}}\dfrac{(\lambdas{n})^m}{m}
	\label{detprehemi}
\end{align}
\subsubsection{The case of $\snh{2n-1}$ }
We observe that $\zsdimh{s}{\lambdas{2n-1}}{2n-1}$ as given in Theorem \ref{hemi_rec_sol} is convergent in the half-plane $\re(s)> n-1 = \mu$,
and $\lambdas{2n-1} = (n-1) ^2$. The first part of equation \eqref{detprehemi} is given by the following expression.
\begin{equation}
	\zshderdim{0}{\lambdas{2n-1}}{2n-1}=\sum_{i=1}^{n}\bar{u}(n-1,i-1)\big[\zeta'(-2i+2)-(n-1)\zeta'(-2i+3)\big]
	\label{detshemiodd}
\end{equation}
Now consider the coefficients $c^{2n-1}_{-m}$ as defined by~\eqref{Cm} which, in this case, translates into
\begin{align*}
	c^{2n-1}_{-m}&= \res\left(\zsdimh{s}{\lambdas{2n-1}}{2n-1},m\right)\Gamma(m) \eqskip
	&=-\bar{u}(n-1,m)(m-1)! \frac{(n-1)}{4},
\end{align*}
with the third term in~\eqref{detprehemi} then becoming
\begin{equation}
	\sum_{m=2}^{n-1}c^{2n-1}_{-m}H_{m-1}\frac{(\lambdas{2n-1})^m}{m!}=-\sum_{m=2}^{n}\bar{u}(n-1,m)\frac{(n-1)^{2m+1}}{4m}H_{m-1}
	\label{Cm_hemi_odd}.
\end{equation}
Next, we evaluate the finite part of $\zsdimh{m}{\lambdas{2n-1}}{2n-1}$ as indicated in~\eqref{fp}. This gives
\begin{align*}
	FP[\zsdimh{m}{\lambdas{2n-1}}{2n-1}]&=\dfrac{1}{2}\sum_{i=1}^{n}\bar{u}(n-1,i-1)FP[\zeta(2m-2i+2)-(n-1)\zeta(2m-2i+3)]\eqskip
	&= \dfrac{1}{2}\sum_{i=1}^{n}\bar{u}(n-1,i-1)\zeta(2m-2i+2) \eqskip
	&\hspace{5mm} - (n-1) \dfrac{1}{2}\sum_{\substack{i=1\\i\neq m + 1}}^{n}\bar{u}(n-1,i-1)\zeta(2m-2i+3)] \eqskip
	&\hspace{10mm}- \bar{u}(n-1,m) \frac{(n-1)}{2}\gamma
\end{align*}
We proceed with the evaluation of the second and fourth terms in expression~\eqref{detprehemi}, making use of Proposition~\ref{zetaprop} $\mathit{(ii)}$ and
Proposition~\ref{cfnevenprop} $\mathit{(iii)}$ and $\mathit{(iv)}$ in the following way:
\begin{gather*}
 \begin{array}{lll}
	&\dsum_{m=n}^{\infty}\zsdimh{m}{\lambdas{2n-1}}{2n-1}\frac{(\lambdas{2n})^m}{m}+ \dsum_{m=1}^{n-1}FP[\zsdimh{m}{\lambdas{2n-1}}{2n-1}]\frac{(\lambdas{2n-1})^m}{m}
	\eqskip
	&=\dsum_{m=1}^{\infty}\frac{(n-1)^{2m}}{2m}\Bigg[\dsum_{i=1}^{n}\bar{u}(n-1,i-1)\zeta(2m-2i+2) - (n-1) \dsum_{\substack{i=1\\i\neq m+1}}^{n}\bar{u}(n-1,i-1)\zeta(2m-2i+3)\Bigg]\eqskip
	&\hspace{5mm}- \dsum_{i=1}^{n-1}\bar{u}(n-1,i)\frac{(n-1)^{2i+1}}{2i}\gamma	\eqskip
	&=\dfrac{1}{2}\dsum_{i=1}^{n}\bar{u}(n-1,i-1)\left[\dsum_{m=2-i}^{\infty}\dfrac{\zeta(2m,n)}{m+i-1}(n-1)^{2m+2i-2} -(n-1)
	\dsum_{\substack{m=2-i\\m\neq 0}}^{\infty}\dfrac{\zeta(2m+1,n)}{m+i-1} (n-1)^{2m+2i-2}\right]\eqskip
	&\hspace{5mm}- \dsum_{i=1}^{n-1}\bar{u}(n-1,i)\frac{(n-1)^{2i+1}}{2i}(\gamma-2H_{n-1})
\end{array}
\end{gather*}
Once again, to simplify the two infinite sums inside the brackets, consider Proposition \ref{crazyformulas} $\mathit{(i)}$ and $\mathit{(ii)}$ ($a=n$ and $t=n-1$)
to obtain
\begin{align*}
	& \dfrac{1}{2}\sum_{i=1}^{n}\bar{u}(n-1,i-1)\Bigg[\sum_{k=0}^{2i-2}\binom{2i-2}{k}\left[ \zeta'(-k)+(-1)^k\zeta'(-k,2n-1)\right](n-1)^{2i-k-2} \\
	&\hspace{40mm} -\sum_{k=0}^{2i-3}\binom{2i-3}{k}\left[ \zeta'(-k)+(-1)^{k+1}\zeta'(-k,2n-1)\right](n-1)^{2i-k-2} \\
	&\hspace{45mm} + \dfrac{(n-1)^{2i-1}}{i-1}\left[\phi(2i-2)-\phi(n)+\gamma\right] \\
	&\hspace{50mm} -2\zeta'(-2i+2,n) +2(n-1)\zeta'(-2i+3,n)\Bigg]\\
	&\hspace{55mm}- \sum_{i=1}^{n-1}\bar{u}(n-1,i)\frac{(n-1)^{2i+1}}{2i}(\gamma-2H_{n-1})\\
	=&	\sum_{i=1}^{n}\bar{u}(n-1,i-1)\Bigg[\sum_{k=0}^{i-1}\binom{2i-2}{2k}\zeta'(-2k)(n-1)^{2i-2k-2} -\sum_{k=0}^{i-1}\binom{2i-3}{2k-1} \zeta'(-2k+1)(n-1)^{2i-2k-1}\\
	&\hspace{5mm}-\zeta'(-2i+2)+(n-1)\zeta'(-2i+3)	\Bigg] + \sum_{i=1}^{n-1}\bar{u}(n-1,i)\frac{(n-1)^{2i+1}}{2i}H_{2i-1}\\
	=&\frac{1}{(2n-2)!}\sum_{k=0}^{2n-1}\zeta'(-k)[s(2n-1,k+1)+(-1)^k s(2n-2,k)]\\
	&\hspace{5mm}-\sum_{i=1}^{n}\bar{u}(n-1,i-1)\Big[\zeta'(-2i+2)-(n-1)\zeta'(-2i+3)\Big] + \sum_{i=1}^{n-1}\bar{u}(n-1,i)\frac{(n-1)^{2i+1}}{2i}H_{2i-1}.
\end{align*}	
Combining this with \eqref{detshemiodd} and \eqref{Cm_hemi_odd} we are able to compute  \eqref{detprehemi} and obtain $\det\left(\snh{2n-1}\right)$ as given in Theorem \ref{det_hemi}.

\subsubsection{The case of $\snh{2n}$}
We observe that $\zsdimh{s}{\lambdas{2n}}{2n}$ as given in Theorem \ref{hemi_rec_sol} is convergent in the section $\Re(s)> n = \mu$ and $\lambdas{2n-1} = (\tfrac{2n-1}{2}) ^2$. The first part of equation~\eqref{detprehemi} is
given by the following expression.
\[
 \begin{array}{lll}
	\zshderdim{0}{\lambdas{2n}}{2n} & = &\dsum_{i=1}^{n}\bar{v}(n,i)\Big[\log(2)\left(\zeta'(-2i+1)-(2n-1)\zeta'(-2i+2)\right)  \eqskip
	& & \hspace{5mm} + \left(1 - 2^{2i-1}\right)\zeta'(-2i+1)-(2n-1)\left(1-2^{2i-2}\right)\zeta'(-2i+2)\Big].
	\label{detshemievender}
 \end{array}
\]
Now consider the coefficients $c^{2n-1}_{-m}$ as defined by~\eqref{Cm} which, in this case, are given by
\begin{align*}
	c^{2n}_{-m}&= \res\left(\zsdimh{s}{\lambdas{2n}}{2n},m\right)\Gamma(m) \\
	&=2^{2m-3}\bar{v}(n,m)(m-1)!
\end{align*}
and the third part of \eqref{detprehemi} is then
\begin{equation}
	\sum_{m=2}^{n-1}c^{2n}_{-m}H_{m-1}\frac{(\lambdas{2n})^m}{m!}=\sum_{m=2}^{n}\bar{v}(n,m)\frac{(2n-1)^{2m}}{8m}H_{m-1}
	\label{Cm_hemi_even}
\end{equation}
Next we evaluate the finite part of $\zsdimh{m}{\lambdas{2n}}{2n}$. Since 
\begin{align*}
	FP[\zsdimh{m}{\lambdas{2n}}{2n}] &= \sum_{\substack{i=1\\i\neq m}}^{n} \bar{v}(n,i)\left(2^{2m-1}-2^{2i-2}\right)\zeta(2m-2i+1) \\
	& \hspace{5mm} -(2n-1) \sum_{i=1}^{n} \bar{v}(n,i)\left(2^{2m-1}-2^{2i-3}\right)\zeta(2m-2i+2) \\
	& \hspace{10mm} + \bar{v}(n,m) 2^{2m-2} \Big(\gamma + \log(2)\Big)
\end{align*}
we may proceed by evaluating the second and fourth parts of expression~\eqref{detprehemi}  using Proposition~\ref{zetaprop} $\mathit{(ii)}$
to convert the Riemann zeta functions into Hurwitz zeta functions, and Proposition \ref{cfnoddprop} $\mathit{(iii)}$ and $\mathit{(iv)}$ to simplify the resulting extra terms in a similar manner to the even sphere case.
\begin{align*}
	&\sum_{m=n}^{\infty}\zsdimh{m}{\lambdas{2n}}{2n}\frac{(\lambdas{2n})^m}{m}+ \sum_{m=1}^{n-1}FP[\zsdimh{m}{\lambdas{2n}}{2n}]\frac{(\lambdas{2n})^m}{m}=\\
	&=\sum_{m=1}^{\infty}\frac{\left(\tfrac{2n-1}{2}\right)^{2m}}{m}\left[\sum_{\substack{i=1\\i\neq m}}^{n}\bar{v}(n,i)\left(2^{2m-1}-2^{2i-2}\right)\zeta(2m-2i+1)\right.\\
	& \left. \hspace{40mm}- (2n-1) \sum_{i=1}^{n}\bar{v}(n,i)\left(2^{2m-1}-2^{2i-3}\right)\zeta(2m-2i+2)\right]\\
	&\hspace{10mm}+ \sum_{i=1}^{n}\bar{v}(n,i)\frac{(2n-1)^{2i}}{4i}\Big(\gamma + \log(4)\Big)	\\
	&=\sum_{i=1}^{n}\dfrac{\bar{v}(n,i)}{2}\left[\sum_{\substack{m=1-i\\m\neq 0}}^{\infty}\dfrac{\zeta(2m+1,2n)}{m+i}(2n-1)^{2m+2i} -2^{2i-1}\sum_{\substack{m=1-i\\m\neq 0}}^{\infty}\dfrac{\zeta(2m+1,n)}{m+i}\left(\dfrac{2n-1}{2}\right)^{2m+2i}
	\right.\\
	&\left. -(2n-1)\sum_{\substack{m=2-i\\m\neq 0}}^{\infty}\dfrac{\zeta(2m,2n)}{m+i-1}(2n-1)^{2m+2i-2} -(2n-1)2^{2i-2} \sum_{\substack{m=2-i\\m\neq 0}}^{\infty}\dfrac{\zeta(2m,n)}{m+i-1}\left(\dfrac{2n-1}{2}\right)^{2m+2i-2}\right]\\
	&\hspace{10mm}+\sum_{i=1}^{n}\bar{v}(n,i)\frac{(2n-1)^{2i}}{4i}\Big(\gamma + \log(4)+H_{n-1} - 2H_{2n}\Big)
\end{align*}
By substituting $a=2n$ and $t=2n-1$ and $a=n$ and $t=\frac{2n-1}{2}$  into Proposition \ref{crazyformulas} $\mathit{(ii)}$, we can expand and simplify the first and
second infinite sums within the brackets. Likewise, by substituting into
Proposition~\ref{crazyformulas} $\mathit{(i)}$, we can expand and simplify the third and fourth infinite sums.

\begin{gather*}
\begin{array}{llll}
	&\dsum_{i=1}^{n}\dfrac{\bar{v}(n,i)}{2}\Bigg\{\dsum_{k=0}^{2i-1}\binom{2i-1}{k}\left[ \zeta'(-k)+(-1)^{k+1}\zeta'(-k,4n-1)\right](2n-1)^{2i-k-1} \eqskip
	&\hspace{25mm} -\dsum_{k=0}^{2i-2}\binom{2i-2}{k}\left[ \zeta'(-k)+(-1)^{k}\zeta'(-k,4n-1)\right](n-1)^{2i-k-1} \eqskip
	&\hspace{30mm}-2^{2i-1}\dsum_{k=0}^{2i-1}\binom{2i-1}{k}\left[ \zeta'(-k, \tfrac{1}{2})+(-1)^{k+1}\zeta'(-k,2n-\tfrac{1}{2})\right]\left(\dfrac{2n-1}{2}\right)^{2i-k-1} \eqskip
	&\hspace{35mm} +2^{2i-2}(2n-1)\dsum_{k=0}^{2i-1}\binom{2i-1}{k}\left[ \zeta'(-k, \tfrac{1}{2})+(-1)^{k+1}\zeta'(-k,2n-\tfrac{1}{2})\right]\left(\dfrac{2n-1}{2}\right)^{2i-k-2} \eqskip
	&\hspace{40mm} - \dfrac{(2n-1)^{2i}}{i}\left[\phi(2i)-\phi(2n)+\gamma\right] - 2\zeta'(-2i+1,2n) \eqskip
	&\hspace{45mm} + 2^{2i-1}\left[ \dfrac{\left(\tfrac{2n-1}{2}\right)^{2i}}{i}\left[\phi(2i)-\phi(n)+\gamma\right] + 2\zeta'(-2i+1,n) \right]\eqskip
	&\hspace{50mm} + (2n-1) \left[2\zeta'(-2i+2,2n)-2^{2i-1}\zeta'(-2i+2,n)\right] \Bigg\}\eqskip
	&\hspace{55mm} +\dsum_{i=1}^{n}\bar{v}(n,i)\frac{(2n-1)^{2i}}{4i}\Big(\gamma + \log(4) +H_{n-1} - 2H_{2n}\Big)\eqskip
	&=\dsum_{i=1}^{n}\bar{v}(n,i)\Bigg\{\dsum_{k=1}^{i}\left[2^{2k-1}\binom{2i-1}{2k-1}\zeta'(-2k+1)(2n-1)^{2i-2k} - 2^{2k-2}\binom{2i-2}{2k-2}\zeta'(-2k+2)(2n-1)^{2i-2k+1}\right] \eqskip
	&\hspace{25mm} - \log(2)\left(\dsum_{k=0}^{2i-1}(-1)^{k+1}\binom{2i-1}{k}(2n-1)^{2i-k-1}\zeta(-k) +(2n-1)\dsum_{j=1}^{n-1}(2j)^{2i-2} - \dfrac{(2n-1)^{2i}}{2i}\right)\eqskip
	&\hspace{30mm} + \left(2^{2i-1} - 1\right)\zeta'(-2i+1) - (2n-1)\left(2^{2i+2}-1\right)\zeta'(-2i+2)\Bigg\}\eqskip
	&\hspace{40mm} - \dsum_{i=1}^{n}\bar{v}(n,i)\frac{(2n-1)^{2i}}{4i}H_{2i-1}\eqskip
	&=\dfrac{1}{(2n-1)!}\dsum_{k=0}^{n}\zeta'(-k)\left[s(2n,k+1)+ (-1)^{k+1} s(2n-1,k)\right] -  \dsum_{i=1}^{n}\bar{v}(n,i)\frac{(2n-1)^{2i}}{4i}H_{2i-1}\eqskip
	&\hspace{5mm} -\dsum_{i=1}^{n} \bar{v}(n,i)\Bigg\{ \log(2)\left(\dsum_{k=0}^{2i-1}(-1)^{k+1}\binom{2i-1}{k}(2n-1)^{2i-k-1}\zeta(-k) +(2n-1)\dsum_{j=1}^{n-1}(2j)^{2i-2} -
	\dfrac{(2n-1)^{2i}}{2i}\right)\eqskip
	&\hspace{10mm} - \left(2^{2i-1} - 1\right)\zeta'(-2i+1) + (2n-1)\left(2^{2i+2}-1\right)\zeta'(-2i+2)\Bigg\}
\end{array}
\end{gather*}
By using the previous equation along with equations \eqref{Cm_hemi_even} and \eqref{detshemievender} in \eqref{detprehemi}, we can derive the expression for $\det\left(\snh{2n}\right)$ as presented in Theorem \ref{det_hemi}.
We note that the terms multiplied by $\log(2)$ cancel in a manner identical to that in the sphere case, using Proposition~\ref{zetaprop} $\mathit{(vi)}$ and
$\mathit{(vii)}$.
\begin{thm}
	Let $n\in\N$. The determinant of the Laplacian on odd - $\snh{2n-1}$ - and even - $\snh{2n}$ - dimensional hemispheres satisfies the following identity.	
	\[
	\begin{array}{rll}
		-\log\left[ \det\left(\snh{2n-1}\right)\right] & = & \zderdimh{0}{2n-1}\eqskip
		& = & \frac{1}{(2n-2)!}\dsum_{k=0}^{2n-1}\zeta'(-k)\left[s(2n-1,k+1)+(-1)^k s(2n-2,k)\right] \eqskip
		& &  \hspace{10mm}-\dsum_{i=1}^{n-1}\bar{u}(n-1,i)\frac{(n-1)^{2i+1}}{2i}\left(\frac{H_{i-1}}{2}-H_{2i-1}\right)\eqskip
		-\log\left[ \det\left(\snh{2n}\right)\right] & = & \zderdimh{0}{2n}\eqskip
		& = & \frac{1}{(2n-1)!}\dsum_{k=0}^{2n-1}\zeta'(-k)\left[s(2n,k+1)+(-1)^{k+1}s(2n-1,k)\right]\eqskip
		& & \hspace{10mm}+\dsum_{i=1}^{n}\bar{v}(n,i)\frac{\left(2n-1\right)^{2i}}{4i}\left(\frac{H_{i-1}}{2}-H_{2i-1}\right)
	\end{array}
	\]
	\label{det_hemi}
\end{thm}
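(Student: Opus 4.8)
The plan is to treat Theorem~\ref{det_hemi} as the terminus of the same four-step scheme already used for $\sn{n}$: shift the spectrum, solve the recursion in closed form, and feed the result into Voros' method. The master identity is~\eqref{detprehemi}, which expresses $-\log[\det(\snh{n})]=\zderdimh{0}{n}$ as the sum of four pieces—the value $\zshderdim{0}{\lambdas{n}}{n}$, a finite-part sum, a residue (harmonic-number) sum, and an infinite tail. Every one of these is rendered explicit once Theorem~\ref{hemi_rec_sol} supplies $\zsdimh{s}{\lambdas{n}}{n}$ as a finite combination of Riemann zeta functions. I would therefore evaluate the four terms, collect, and show that the result collapses to the stated Stirling-number expression plus the harmonic correction. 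The odd- and even-dimensional cases, with shifts $\lambdas{2n-1}=(n-1)^2$ and $\lambdas{2n}=(\tfrac{2n-1}{2})^2$, are handled separately, exactly mirroring the two sphere computations carried out above.

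For $\snh{2n-1}$ I would assemble the three precomputed ingredients: the derivative~\eqref{detshemiodd}, the residue contribution~\eqref{Cm_hemi_odd}, and the combined finite-part-plus-tail sum. The tail is evaluated by first converting each Riemann zeta into a Hurwitz zeta through Proposition~\ref{zetaprop}~$\mathit{(ii)}$, then reindexing, and finally applying Proposition~\ref{crazyformulas}~$\mathit{(i)}$ and~$\mathit{(ii)}$ with $a=n$, $t=n-1$. The decisive cancellation is that the ``bare'' contributions $\zeta'(-2i+2)-(n-1)\zeta'(-2i+3)$ emerging from that expansion reproduce precisely the terms of~\eqref{detshemiodd} with opposite sign, so the derivative piece is annihilated. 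What survives reorganizes: the central-factorial-weighted binomial sums collapse into the single Stirling numbers $s(2n-1,k+1)$ and $s(2n-2,k)$ via Proposition~\ref{cfnevenprop}, while the leftover $\gamma$, harmonic-number, and digamma contributions from~\eqref{Cm_hemi_odd} and from the expansion—rewritten through~\eqref{psiprop}—assemble into the coefficient $\tfrac{H_{i-1}}{2}-H_{2i-1}$.

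The even case $\snh{2n}$ runs in parallel, with $\bar{v}(n,i)$ in place of $\bar{u}$, starting from~\eqref{detshemievender} and~\eqref{Cm_hemi_even}. Here one needs \emph{both} substitutions $(a,t)=(2n,2n-1)$ and $(a,t)=(n,\tfrac{2n-1}{2})$ in Proposition~\ref{crazyformulas}, since the closed form in Theorem~\ref{hemi_rec_sol} carries the two shifted arguments $2n$ and $n$. The bare $\zeta'(-2i+1)$ and $\zeta'(-2i+2)$ terms again cancel against~\eqref{detshemievender}, and Proposition~\ref{cfnoddprop} converts the remaining binomial sums into $s(2n,k+1)$ and $s(2n-1,k)$. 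The new feature is a family of terms proportional to $\log 2$; I would show these vanish exactly as in the even-sphere case, using Faulhaber's formula (Proposition~\ref{zetaprop}~$\mathit{(vii)}$) together with the special values $\zeta(-k)$ of Proposition~\ref{zetaprop}~$\mathit{(vi)}$.

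The main obstacle is the bookkeeping of cancellations, which is routine in spirit but delicate in execution. Two points demand care. First, Proposition~\ref{crazyformulas} generates several finite correction sums of the shape $\sum_j\log(\cdots)(\cdots)^{2i\pm1}$—originating in the gap between Riemann and Hurwitz zeta derivatives via Proposition~\ref{zetaprop}~$\mathit{(iii)}$—and one must verify that these, together with the $\log 2$ terms, cancel identically; this is precisely where Faulhaber's formula is indispensable. Second, the reduction of the $n$-fold central-factorial-weighted binomial sums to plain Stirling numbers of the first kind (isolated in Propositions~\ref{cfnoddprop} and~\ref{cfnevenprop}) is the algebraic heart of the argument, as it turns a nested double sum into the clean single sum $\sum_{k=0}^{2n-1}\zeta'(-k)[\,\cdots\,]$. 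Tracking the rational and harmonic prefactors accurately through both reductions—so that the final coefficient is exactly $\tfrac{1}{(2n-2)!}$ (respectively $\tfrac{1}{(2n-1)!}$) and the harmonic term is exactly $\tfrac{H_{i-1}}{2}-H_{2i-1}$—is the step most prone to error and the one I would carry out most carefully.
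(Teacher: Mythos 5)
Your proposal follows essentially the same route as the paper: it assembles the four pieces of the Voros decomposition~\eqref{detprehemi} from Theorem~\ref{hemi_rec_sol}, namely~\eqref{detshemiodd} and~\eqref{Cm_hemi_odd} (resp.~\eqref{Cm_hemi_even}), converts Riemann to Hurwitz zeta values via Proposition~\ref{zetaprop}~$\mathit{(ii)}$, applies Proposition~\ref{crazyformulas} with exactly the substitutions the paper uses, and correctly identifies both the cancellation of the $\zeta'$-derivative piece and the reduction of the weighted binomial sums to Stirling numbers through Propositions~\ref{cfnevenprop} and~\ref{cfnoddprop}. This matches the paper's own argument step for step, so the only remaining work is the explicit bookkeeping you already flag as the delicate part.
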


As in the case of spheres, we give the first $100$ numerical values in Appendix~\ref{numerical}, and show the graphs of the first $10\ 000$ values
in Figure~\ref{gr:hemisphere1}. Note that now the behavior for even and odd dimensions is even more striking, in that the corresponding limits as
the dimension grows are different.
\begin{figure}[!ht]
\includegraphics[scale=0.55]{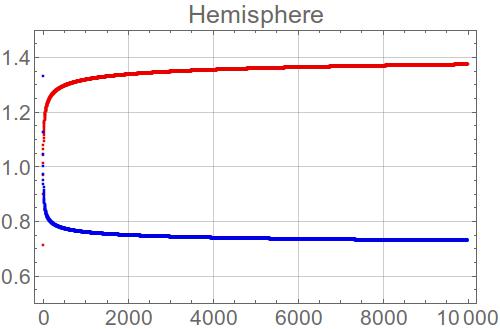}
\caption{\label{gr:hemisphere1}The first $10\, 000$ values for $\det\left(\snh{n}\right)$ (odd dimensions: blue; even dimensions: red).}
\end{figure}

\subsection{Real projective space}

The case of the real projective space has recently been considered by Hartmann and Spreafico in~\cite{HartSpre2}.
Since our approach is different and, above all, the results obtained are given in a different form, we now indicate
the main steps in the application of our method for this case, leaving out the main part of the proofs, as
they are similar to those done above.

The spectrum of the Laplacian is given by integers of the form $2k(2k+n-1)$ with the associated multiplicities
\[
m_{k}^n = \binom{n+2k}{2k}-\binom{n+2k-2}{2k-2}
\]
where $k \in \N_{0}$ - see \cite{berger}. As in the sphere case, we apply a shift on the spectrum of $\lambdas{n}$ and the corresponding shifted zeta function is thus defined as
\[
\zsrp{s}{\lambdas{n}}:=\dsum_{k=1}^{\infty} \dfrac{m_k^n}{\left(2k +\tfrac{n-1}{2}\right)^{2s}}
\]
with the series being absolutely convergent for $\re(s)> n $ -- see Theorem \ref{projectivethm}. From this it is possible to derive a recurrence relation similar to
that given in Lemma~\ref{zetarecursionlemma}, namely,
\begin{equation*}\label{recurrp}
	\zsdimrp{s}{\lambdas{n+4}}{n+4}=\dfrac{\zsdimrp{s-2}{\lambdas{n}}{n}-\left(\tfrac{n^2+1}{2}\right)
		\zsdimrp{s-1}{\lambdas{n}}{n}+\left(\tfrac{ n^2-1}{4}\right)^2 \zsdimrp{s}{\lambdas{n}}{n}}{n(n+1)(n+2) (n+3)}-\left(\dfrac{2}{n+1}\right)^{2 s}
\end{equation*}
valid for all positive integers $n$ and complex numbers $s$ in the domain of the functions involved. Furthermore,
\begin{align*}
	\zsdimrp{s}{\lambdas{1}}{1}&=2^{1-2s}\zeta(2s)\eqskip
	\zsdimrp{s}{\lambdas{2}}{2}&=2^{2-2s}\zeta\left(2s-1,\tfrac{5}{4}\right) \eqskip
	\zsdimrp{s}{\lambdas{3}}{3}&=(1-2^{2-2s})\zeta(2(s-1)) - 1 \eqskip
	\zsdimrp{s}{\lambdas{4}}{4}&= \tfrac{2^{-2s-1}}{3}\left[16\zeta\left(2s-3,\tfrac{7}{4}\right)-\zeta\left(2s-1,\tfrac{7}{4}\right) \right]
\end{align*}
so that $\zsdimrp{s}{\lambdas{n}}{n}$ may be determined for all $n$ by successive applications of the recursion formula. Remarkably, even though the recursion
highlights the need to treat four independent cases, in the end we are able to recover the dimensional dichotomy similar to the previous spaces.
\begin{thm}
	\label{projectivethm}
	For each $n\in\N$ the zeta function $\zsrp{s}{\lambdas{n}}$ satisfies the following identities:
	\[
	\begin{array}{lll}
		\zsdimrp{s}{\lambdas{2n-1}}{2n-1} & = & \dsum_{i=1}^{n}\bar{u}(n-1,i-1)(\tau_n-(-1)^n 2^{2i-2s-2})\zeta(2s-2i+2)-(n-1)^{-2s}\eqskip
		\zsdimrp{s}{\lambdas{2n}}{2n} & = & \dsum_{i=1}^{n}\bar{v}(n,i)2^{4i-2s-2}\zeta(2s-2i+1,\tfrac{5}{4}+\tfrac{\tau_n}{2})-\gamma_n\left[\fr{2}{(2n-1)}\right]^{2s}
	\end{array}
	\]
	where
	\begin{equation*}
		\gamma_n = 
		\Bigg\{
		\begin{array}{ll}
			0 , & \normalfont\text{if } n = 1,2\\
			1 , & \normalfont\text{otherwise}
		\end{array}\qquad\qquad
		\tau_n = 
		\Bigg\{
		\begin{array}{ll}
			0 , & \normalfont\text{if $n$ is odd}\\
			1 , & \normalfont\text{if $n$ is even}
		\end{array}
	\end{equation*}	
	The determinant of the Laplacian on odd-- and even--dimensional real projective spaces,
	$\snrp{2n-1}$ and $\snrp{2n}$, respectively, satisfies		
	\begin{gather*}
	\begin{array}{lll}
		\zderdimrp{0}{2n-1} & = & -\log\left[ \det\left(\snrp{2n-1}\right)\right]\eqskip
		& = & \fr{1}{(2n-2)!}\dsum_{k=0}^{n-1}2^{2k+1}\zeta'(-2k)\left[s(2n-1,2k+1)+s(2n-2,2k)\right] + \log(4(n-1))\eqskip
		\zderdimrp{0}{2n} & = & -\log\left[ \det\left(\snrp{2n}\right)\right]\eqskip
		& = & \fr{1}{(2n-1)!}\dsum_{k=0}^{2n-1}(1-\tau_k 2^{k+1})\zeta'(-k)\left[s(2n,k+1)+s(2n-1,k)\right]\eqskip
		& & \hspace{10mm}+\log(4n-2)+\dsum_{i=1}^{n}\bar{v}(n,i)\frac{\left(2n-1\right)^{2i}}{4i}\left(\frac{H_{i-1}}{2}-H_{2i-1}\right)
	\end{array}
	\end{gather*}
\end{thm}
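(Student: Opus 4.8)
The plan is to follow the three-stage strategy developed for $\sn{n}$ in Sections~2.3--2.5, now applied to the projective spectrum $2k(2k+n-1)$. The first stage is to derive the stated four-term recursion. As in the proof of Lemma~\ref{zetarecursionlemma}, I would express $m_k^{n+4}$ as a rational multiple of $m_{k+1}^n$ --- the reindexing $k\mapsto k+1$ matching the shift bases, since $2(k+1)+\tfrac{n-1}{2}=2k+\tfrac{n+3}{2}$ --- and factor the resulting quartic in $X:=2k+\tfrac{n+3}{2}$ as $\big(X^2-(\tfrac{n-1}{2})^2\big)\big(X^2-(\tfrac{n+1}{2})^2\big)$, whose expansion reproduces the combination $\zsdimrp{s-2}{\lambdas{n}}{n}-\tfrac{n^2+1}{2}\zsdimrp{s-1}{\lambdas{n}}{n}+(\tfrac{n^2-1}{4})^2\zsdimrp{s}{\lambdas{n}}{n}$ divided by $n(n+1)(n+2)(n+3)$; the boundary term left over from the reindexing produces the $-(\tfrac{2}{n+1})^{2s}$ correction. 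The four initial values $\zsdimrp{s}{\lambdas{j}}{j}$ for $j=1,2,3,4$ follow directly from the low-dimensional multiplicities.

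The second stage is to solve the recursion. Because it advances the dimension by four, I would run an induction whose base consists of the four listed initial conditions and whose step applies the recursion once; it is convenient to split into the two parities and to carry the auxiliary parameters $\tau_n$ and $\gamma_n$ through the induction, so that the four residue classes modulo four are packaged into the single even/odd dichotomy stated in the theorem. The inductive step reduces, exactly as in Theorem~\ref{zetarecsol}, to identities for the coefficients $\bar{u}(n,i)$ and $\bar{v}(n,i)$ collected in Appendix~\ref{appendixcfn} (Propositions~\ref{cfnevenprop} and \ref{cfnoddprop}), which absorb the shifts $s\mapsto s-1,s-2$ together with the index shift $i\mapsto i-1$. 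Reading off the pole structure of the resulting finite combinations of Riemann and Hurwitz zeta functions then yields the abscissae of convergence ($\mu=n-1$ for $\snrp{2n-1}$ and $\mu=n$ for $\snrp{2n}$) needed in the next stage.

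The third stage is to feed these closed forms into Voros's formula, Corollary~\ref{voroscorollary}. The odd case $\snrp{2n-1}$ is the lighter one: each summand carries a factor $\zeta(2s-2i+2)$ whose pole sits at the half-integer $s=i-\tfrac12$, so $c_{-m}=0$ and $FP[\zsdimrp{m}{\lambdas{2n-1}}{2n-1}]=\zsdimrp{m}{\lambdas{2n-1}}{2n-1}$ at every integer $m$; the computation then parallels $\sn{2n+1}$, with the infinite sum collapsed by Proposition~\ref{crazyformulas}~$\mathit{(i)}$ and the central-factorial-to-Stirling identity (Proposition~\ref{cfnevenprop}~$\mathit{(v)}$) producing the stated $s(2n-1,2k+1)+s(2n-2,2k)$ combination. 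The even case $\snrp{2n}$ mirrors $\sn{2n}$: the factor $\zeta(2s-2i+1,\cdot)$ has integer poles, so nontrivial finite parts, residue coefficients $c_{-m}$, and the harmonic-number term $(\tfrac{H_{i-1}}{2}-H_{2i-1})$ all appear; the two infinite sums are reduced using Proposition~\ref{crazyformulas}~$\mathit{(ii)}$, the $\log$-terms cancel via Proposition~\ref{zetaprop}~$\mathit{(vi)}$--$\mathit{(vii)}$, and the final Stirling form comes from Proposition~\ref{cfnoddprop}~$\mathit{(v)}$.

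The main obstacle will be the bookkeeping of the quarter-integer Hurwitz shifts. Because the projective eigenvalues contain $2k+\tfrac{n-1}{2}$, after Proposition~\ref{zetaprop}~$\mathit{(ii)}$ is used to turn Riemann values into Hurwitz ones the arguments land at $\tfrac54+\tfrac{\tau_n}{2}$ and similar quarter-integers, rather than at the integers or half-integers of the sphere case. The delicate point is to evaluate the derivatives $\zeta'(-k,a)$ at these fractional $a$ and to recombine them --- through the reflection and duplication identities of Proposition~\ref{zetaprop}~$\mathit{(i)}$--$\mathit{(iv)}$ applied at the arguments $a-t$ and $a+t$ arising in Proposition~\ref{crazyformulas} --- back into ordinary $\zeta'(-k)$, so that the answer is expressible purely in terms of $\zeta'(-k)$ and Stirling numbers. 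This reduction, together with verifying that the parity parameters $\tau_n$ and $\gamma_n$ are propagated consistently by the four-step induction, is where the real work lies.
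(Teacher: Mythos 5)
Your proposal takes essentially the same route the paper intends: for $\snrp{n}$ the paper records only the four-term recursion, its initial conditions, and the resulting closed forms, explicitly omitting the proofs ``as they are similar to those done above,'' and your three-stage outline --- factoring the multiplicity ratio $m_k^{n+4}/m_{k+1}^{n}$ into the quartic $\bigl(X^2-(\tfrac{n-1}{2})^2\bigr)\bigl(X^2-(\tfrac{n+1}{2})^2\bigr)$, solving the recursion by a parity-split induction via the Appendix~\ref{appendixcfn} identities, and then feeding the closed forms into Corollary~\ref{voroscorollary} with trivial finite parts in odd dimensions and the full $c_{-m}$/harmonic-number apparatus in even dimensions --- is exactly the argument being referenced. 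One caveat worth recording: the boundary term left over by the reindexing $k\mapsto k+1$ is $-\left(\tfrac{2}{n+3}\right)^{2s}$ rather than the $-\left(\tfrac{2}{n+1}\right)^{2s}$ that both you and the paper's displayed recursion assert, since the missing $j=1$ term of the dimension-$n$ sum sits at $2+\tfrac{n-1}{2}=\tfrac{n+3}{2}$ (as one checks by running the recursion from $\zsdimrp{s}{\lambdas{1}}{1}=2^{1-2s}\zeta(2s)$ against the stated closed form for $\zsdimrp{s}{\lambdas{5}}{5}$), so the induction only closes onto the theorem's formulas with that corrected constant.
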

The first $10\,000$ values are displayed in Figure~\ref{gr:proj}.

\begin{figure}[!ht]
\includegraphics[scale=0.55]{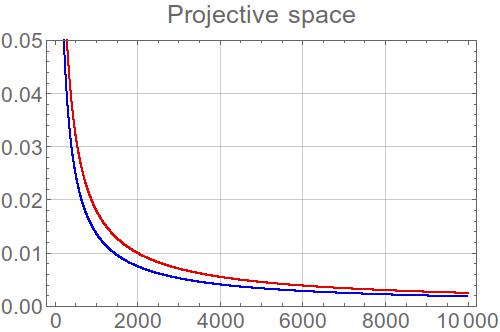}
\caption{\label{gr:proj}The first $10\, 000$ values for $\det\left(\snrp{n}\right)$ (odd dimensions: blue; even dimensions: red).}
\end{figure}

\subsection{The quantum harmonic oscillator revisited}
In~\cite{frei} the first author studied the determinant of the quantum harmonic oscillator in $\R^{n}$, whose spectrum is now given by integers of the form $2k+n$
with the associated multiplicities
\[
 m_{k}^n = \dbinom{n+k-1}{k},
\]
where $k \in \N_{0}$. The corresponding zeta function is thus defined as
\[
 \zH{s}{n} := \dsum_{k=0}^{\infty} \frac{\ds m_{k}}{\ds (2k+n)^s},
\]
with the series being absolutely convergent for $\re(s)>n$ -- see~\cite{frei} for the details. From this it was possible to derive the recurrence
relation~\cite[Theorem~A]{frei}
\begin{equation}\label{recurform}
  \zH{s}{n+2} = \fr{1}{4n(n+1)}\zH{s-2}{n} - \fr{n}{4(n+1)}\zH{s}{n},
\end{equation}
for all positive integers $n$ and complex numbers $s$ in the domain of the functions involved. Furthermore,
\[
\begin{array}{lll}
\zH{s}{1} & = & \left(1-2^{-s}\right)\zeta(s)\eqskip
\zH{s}{2} & = & 2^{-s}\zeta(s-1),
\end{array}
\]
so that $ \zH{s}{n}$ may be determined for all $n$ by successive applications of the recursion formula above. Most of the study
in~\cite{frei} then followed along lines closer to a more classical complex analytic line of approach to the study of zeta functions,
without making use of these formulae. However, the 
similarities with the identities given in Lemma~\ref{zetarecursionlemma} and the other cases considered in this paper are striking, and, indeed, a
similar approach is possible, allowing us to determine an explicit formula for $ \zH{s}{n}$ -- note that the study carried out in~\cite{frei} was mostly concerned with finding, in a rigorous way, the asymptotic behavior of the determinant as $n$ became large.
\begin{thm}
	Let $n\in\N$. The zeta function $\zH{s}{n}$ is given by
	
	\begin{align*}
		\zH{s}{2n}&=\dfrac{2^{-s}}{(2n-1)!}\sum_{i=1}^{n}u(n,i)\zeta(s-2i+1)\eqskip
		\zH{s}{2n-1}&=\dfrac{1}{4^n(2n-2)!}\sum_{i=1}^{n}v(n,i)\left(4-2^{-s+2i}\right)\zeta(s-2i+2)
	\end{align*}
while the corresponding determinants satisfy
\[
-\log\left[\normalfont\text{det}(\Hh{2n})\right]=
\fr{1}{(2 n-1)!}\dsum_{i=1}^n u(n,i)\left[\zeta '(-2i+1)-\log (2) \zeta (-2i+1)\right]
\]
and
\[
-\log\left[\det(\Hh{2n-1})\right]=
 \fr{1}{4^n(2 n-2)!} \dsum _{i=2}^n \left(4-4^{i}\right)v(n,i)  \zeta '(-2i+2)
 +\fr{(-1)^{n}8\log(2)(2n-2)!}{16^n \big((n-1)!\big)^2}. 
\]
\end{thm}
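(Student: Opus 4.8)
The plan is to mirror the strategy already used for the sphere: first pin down the explicit closed forms for $\zH{s}{n}$ by induction on $n$, treating the two parities separately, and then read off the two determinants by differentiating those formulae at $s=0$. A welcome simplification relative to the sphere is that the harmonic-oscillator spectrum $2k+n$ has least element $n>0$, so there is no zero eigenvalue, no shift, and no appeal to Voros' Corollary~\ref{voroscorollary} is needed: one has directly $-\log\det(\Hh{n})=\zHder{n}$, and moreover every term in the closed forms is a finite combination of shifted Riemann zeta functions, each analytic at $s=0$ (the only pole of $\zeta$ sits at $1$, and $s=0$ avoids all the shifted poles since $i\ge1$). Thus the second half collapses to a termwise differentiation with no finite-part bookkeeping.

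For the zeta-function identities I would argue exactly as in Theorem~\ref{zetarecsol}. The base case $n=1$ is the pair of initial conditions $\zH{s}{1}=(1-2^{-s})\zeta(s)$ and $\zH{s}{2}=2^{-s}\zeta(s-1)$, which force $u(1,1)=v(1,1)=1$ and so agree with the claimed expressions. For the even induction step I set the dimension parameter equal to $2n$ in~\eqref{recurform}, substitute the hypothesis for $\zH{s-2}{2n}$ and $\zH{s}{2n}$, use $2^{-(s-2)}=4\cdot2^{-s}$ together with the reindexing $\zeta(s-2i-1)=\zeta(s-2(i+1)+1)$ to align the two sums, and collect the coefficient of each $\zeta(s-2i+1)$. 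After the factorial bookkeeping $(2n+1)!=(2n+1)(2n)(2n-1)!$, matching coefficients reduces precisely to the central-factorial recursion
\[
u(n+1,i)=u(n,i-1)-n^2\,u(n,i),
\]
which is one of the identities collected in Appendix~\ref{appendixcfn}. The odd step proceeds identically with dimension parameter $2n-1$, the only new feature being that the two-term weight $(4-2^{-s+2i})$ splits into a constant part and an exponential part that reindex with different multiplicative constants, the splitting being reconciled by the companion recursion for $v(n,i)$ in the same appendix.

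For the determinants I would differentiate the closed forms termwise at $s=0$. In the even case $\frac{d}{ds}\big[2^{-s}\zeta(s-2i+1)\big]\big|_{s=0}=\zeta'(-2i+1)-\log(2)\zeta(-2i+1)$, and summing over $i$ against the prefactor $1/(2n-1)!$ yields the stated formula at once. In the odd case the generic term $(4-2^{-s+2i})\zeta(s-2i+2)$ differentiates to $\log(2)2^{2i-s}\zeta(s-2i+2)+(4-2^{2i-s})\zeta'(s-2i+2)$; for $i\ge2$ the argument $2-2i$ is a negative even integer, so $\zeta(2-2i)=0$ by Proposition~\ref{zetaprop}~$\mathit{(vi)}$, the $\log 2$ piece drops out, and only $(4-4^{i})\zeta'(-2i+2)$ survives. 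The index $i=1$ is exceptional: there the coefficient $4-2^{2-s}$ vanishes at $s=0$ while $\zeta(s)$ is analytic and nonzero, so only the product-rule cross term remains, giving $4\log(2)\zeta(0)=-2\log(2)$ times $v(n,1)/\big(4^n(2n-2)!\big)$. Inserting the boundary value $v(n,1)=\dfrac{(-1)^{n+1}4\,((2n-2)!)^2}{4^n((n-1)!)^2}$ from Appendix~\ref{appendixcfn} converts this single term into the isolated summand $\dfrac{(-1)^{n}8\log(2)(2n-2)!}{16^n((n-1)!)^2}$, while the remaining $i\ge2$ contributions assemble into the displayed sum.

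The elementary calculus here is harmless; the genuine content lives in Appendix~\ref{appendixcfn}. The main obstacle will be confirming that the coefficient recursions \emph{forced} by~\eqref{recurform} coincide exactly with the defining recursions of the (unnormalised) central factorial numbers $u(n,i)$ and $v(n,i)$, and nailing down the boundary data $u(1,1)=v(1,1)=1$ together with the closed form of $v(n,1)$ needed for the exceptional odd term. I expect the $v$-recursion to be the more delicate of the two, because the exponential weight $(4-2^{-s+2i})$ has two summands that reindex with distinct multiplicative constants under the shift $\zeta(s-2i)=\zeta(s-2(i+1)+2)$; keeping those constants straight during the reindexing is where an error is most likely to slip in.
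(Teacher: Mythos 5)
Your proposal is correct and follows exactly the route the paper intends: the paper states this theorem without proof, remarking only that the recursion-solving strategy of Theorem~\ref{zetarecsol} carries over, and your induction via~\eqref{recurform} combined with the recurrences $u(n+1,k)=u(n,k-1)-n^2u(n,k)$ and $v(n+1,k)=v(n,k-1)-(2n-1)^2v(n,k)$, followed by termwise differentiation at $s=0$ (where no Voros correction is needed since no shift was applied), is precisely that argument; I verified the base cases, both induction steps, and the $i=1$ exceptional term in the odd case. The only small caveat is that the closed form $v(n,1)=(-1)^{n-1}\bigl((2n-2)!\bigr)^2/\bigl(4^{n-1}((n-1)!)^2\bigr)$ is not listed explicitly in Appendix~\ref{appendixcfn}, but it follows immediately by reading off the coefficient of $x$ in Lemma~\ref{cfnoddlemma}, and it does reproduce the stated isolated $\log(2)$ summand.
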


\appendix

\section{Central factorial numbers and relevant properties}
\label{appendixcfn}

The Stirling numbers of  the first kind, $s(n,k)$ can be defined as the coefficients of the expansion of the falling factorial polynomial of degree $n$, denoted $[x]_n$
\begin{equation}\label{stirlingnumbers}
	[x]_0=1,\quad [x]_n=x(x-1)\dots(x-n+1)=:\sum_{k=0}^{n}s(n,k)x^k.
\end{equation}
They have received a lot of attention and are at the core of the study of permutations, as $s(n,k)$ counts the number of permutations of $n$ elements with $k$ cycles. We shall
only present here their properties which are relevant for our purposes, and refer to~\cite[Chapter 5]{comtet} for a more complete treatment of these numbers.

We shall, however, make extensive use of central factorial numbers of the first kind $t(n,k)$, which have received less attention in the literature. They can be defined
as the coefficients of the expansion of the central factorial polynomial of degree $n$, denoted by $x^{[n]}$.
\begin{equation}
	x^{[0]}=1,\quad x^{[n]}=x\left(x+\frac{n}{2}-1\right)\dots\left(x-\frac{n}{2}+1\right)=:\sum_{k=0}^{n}t(n,k)x^k \label{cfngenerating}
\end{equation} 
A short introductory reference to central factorial numbers may be found in~\cite[pp. 213-217]{riordan}, and we refer to~\cite{butzer} for a more systematic treatment,
including their main properties and a variety of applications. Although more closely related to the Stirling numbers of the first kind, the central factorial numbers are also
related to the Euler and Bernoulli numbers (see~\cite{cfneulerbernoulli,cfneulerbernoulli2}). In this appendix, we shall summarize and introduce several properties which were
used throughout the paper.

The identities in the following proposition follow directly from the aforementioned definitions. For more details we refer to \cite[pp. 428, Propostion~2.1]{butzer} with the
exception of entry $\mathit{(vii)}$ which is given in \cite[pp. 480, Propostion~7.3]{butzer}. We remark the similarities between the recursion in entry $\mathit{(i)}$ and the
recursion given for $\zs{s}{\lambdas{n}}$ in Lemma \ref{zetarecursionlemma}.
\begin{proposition} Let $n\in\N$. The following identities hold.
	
	$ \arraycolsep=3pt\def\arraystretch{1.8}
	\begin{array}{cl l}
		\mathit{(i)} & t(n,k)=t(n-2,k-2)-\;\left(\frac{n-2}{2}\right)^2t(n-2,k)  & 2\leq k \leq n\\
		\mathit{(ii)} & s(n,k)=s(n-1,k-1)-\left(n-1\right)s(n-1,k) & 2\leq k \leq n \\	 \mathit{(iii)} & t(n,n)=s(n,n)=1 & n\geq0 \\
		\mathit{(iv)} & t(n,k)=s(n,k)=0 & n < k \\
		\mathit{(v)} & t(n,0)=s(n,0)=\delta_{n,0} & n \geq0\\
		\mathit{(vi)} & t(2n,2k+1)=t(2n+1,2k)=0 & n,k \geq 0 \\
		\mathit{(vii)} & s(n,k)=(-1)^{n+k}\sum\limits_{i=1}^{n}\binom{i-1}{k-1}\left(\frac{n}{2}\right)^{i-k}t(n,k) & n,k \geq 1
	\end{array}
	$
	\label{cfnprop}
\end{proposition}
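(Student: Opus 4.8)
The plan is to read every identity off the two generating polynomials directly: the falling factorial $[x]_n=\sum_{k=0}^n s(n,k)x^k$ from \eqref{stirlingnumbers} and the central factorial $x^{[n]}=\sum_{k=0}^n t(n,k)x^k$ from \eqref{cfngenerating}, treating each claim as a statement about their coefficients. Items $\mathit{(iii)}$--$\mathit{(v)}$ are then immediate. Both polynomials are monic of degree $n$, being products of $n$ monic linear factors, so the top coefficients give $s(n,n)=t(n,n)=1$ and the absence of higher-degree terms gives $s(n,k)=t(n,k)=0$ for $k>n$; and both carry an explicit leading factor $x$ when $n\ge 1$, so their constant terms vanish, yielding $s(n,0)=t(n,0)=\delta_{n,0}$ once the case $n=0$ (where $[x]_0=x^{[0]}=1$) is recorded separately. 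For $\mathit{(ii)}$ I would use the one-step factorisation $[x]_n=(x-n+1)[x]_{n-1}$: inserting the expansions and matching the coefficient of $x^k$ produces $s(n,k)=s(n-1,k-1)-(n-1)s(n-1,k)$.

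For $\mathit{(i)}$ and $\mathit{(vi)}$ I would first record the two-step factorisation
\[
 x^{[n]}=\left(x^2-\left(\tfrac{n-2}{2}\right)^2\right)x^{[n-2]},
\]
which falls out of the product form in \eqref{cfngenerating} by peeling off the outermost pair of factors $\left(x+\tfrac{n}{2}-1\right)\left(x-\tfrac{n}{2}+1\right)=x^2-\left(\tfrac{n-2}{2}\right)^2$ and noting that the surviving factors reassemble exactly into $x^{[n-2]}$. Expanding and matching the coefficient of $x^k$ gives $\mathit{(i)}$. For $\mathit{(vi)}$ I would induct in steps of two on the parity of the polynomial: the base cases $x^{[0]}=1$ and $x^{[1]}=x$ are even and odd respectively, and since multiplication by $x^2-\left(\tfrac{n-2}{2}\right)^2$ preserves parity, $x^{[2n]}$ carries only even powers and $x^{[2n+1]}$ only odd powers of $x$, which is precisely $t(2n,2k+1)=t(2n+1,2k)=0$.

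The genuinely substantial item is $\mathit{(vii)}$, the conversion of central factorial numbers into Stirling numbers attributed to \cite{butzer}, and the plan is to reprove it self-containedly through the closed-form polynomial identity
\[
 [x]_n=(-1)^{n+1}\,x\sum_{i=1}^{n}t(n,i)\left(\tfrac{n}{2}-x\right)^{i-1}.
\]
I would verify this directly from the product form: since $x^{[n]}/x=\prod_{j=1}^{n-1}\left(x+\tfrac{n}{2}-j\right)$, the substitution $x\mapsto\tfrac{n}{2}-x$ turns each factor into $-(x-(n-j))$, and after collecting the sign $(-1)^{n-1}$ and the leading $x$ this reproduces $x(x-1)\cdots(x-n+1)=[x]_n$. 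With the identity established, the formula is pure bookkeeping: expanding $\left(\tfrac{n}{2}-x\right)^{i-1}$ by the binomial theorem and extracting the coefficient of $x^k$ gives $s(n,k)=(-1)^{n+k}\sum_{i=1}^{n}\binom{i-1}{k-1}\left(\tfrac{n}{2}\right)^{i-k}t(n,i)$. I would flag that the summand as printed reads $t(n,k)$, which must be a typographical slip for $t(n,i)$: with a constant $t(n,k)$ pulled out of the $i$-sum the identity already fails at $n=3$, $k=1$ (the right side returns $-\tfrac{19}{16}$ rather than $s(3,1)=2$), whereas the reindexed $t(n,i)$ is exactly what the coefficient extraction produces. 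The hard part here will not be any single manipulation but keeping the signs and the shift $x\mapsto\tfrac{n}{2}-x$ consistent: in verifying the identity the substitution contributes $(-1)^{n-1}$, which must cancel the prefactor $(-1)^{n+1}$, while in the coefficient extraction the term $(-x)^{k-1}$ contributes $(-1)^{k-1}$, which combines with $(-1)^{n+1}$ to give the stated $(-1)^{n+k}$.
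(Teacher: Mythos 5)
Your proposal is correct in every item, but it is worth noting that the paper does not actually prove Proposition~\ref{cfnprop} at all: it declares the identities to ``follow directly from the aforementioned definitions'' and delegates the details to Butzer et al.~\cite{butzer} (Proposition~2.1 for $\mathit{(i)}$--$\mathit{(vi)}$, Proposition~7.3 for $\mathit{(vii)}$). What you have done is supply the missing derivations along exactly the route the paper gestures at. Your key structural observation, the two-step factorisation $x^{[n]}=\bigl(x^2-\bigl(\tfrac{n-2}{2}\bigr)^2\bigr)x^{[n-2]}$, is the right one: it yields $\mathit{(i)}$ by coefficient matching and $\mathit{(vi)}$ by the parity induction, and it is also the structural reason behind the paper's own remark that the recursion in $\mathit{(i)}$ mirrors the zeta recursion of Lemma~\ref{zetarecursionlemma}. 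Your self-contained proof of $\mathit{(vii)}$ via the reflection identity
\begin{equation*}
 [x]_n=(-1)^{n+1}\,x\sum_{i=1}^{n}t(n,i)\left(\tfrac{n}{2}-x\right)^{i-1}
\end{equation*}
is valid -- the substitution $x\mapsto\tfrac{n}{2}-x$ sends each factor $x+\tfrac{n}{2}-j$ of $x^{[n]}/x$ to $-(x-(n-j))$, producing $(-1)^{n-1}[x]_n/x$ as you say, and the sign bookkeeping $(-1)^{n+1}(-1)^{k-1}=(-1)^{n+k}$ in the binomial extraction is right. This buys the reader something the paper does not provide: a verifiable argument in place of a literature pointer, which matters here because the statement as printed contains a genuine typographical error that you correctly caught. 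The summand must read $t(n,i)$, not $t(n,k)$; your counterexample is accurate ($t(3,1)=-\tfrac{1}{4}$ since $x^{[3]}=x^3-\tfrac{x}{4}$, so the printed right side gives $\tfrac{19}{4}\cdot\bigl(-\tfrac{1}{4}\bigr)=-\tfrac{19}{16}$, while $s(3,1)=2$, and the corrected version returns $-\tfrac{1}{4}+\tfrac{9}{4}=2$). One cosmetic remark: since $\binom{i-1}{k-1}=0$ for $i<k$, the sum in $\mathit{(vii)}$ effectively runs over $k\le i\le n$, which is the form in which the identity is applied in the proofs of Propositions~\ref{cfnevenprop}~$\mathit{(v)}$ and~\ref{cfnoddprop}~$\mathit{(v)}$.
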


It will be useful to consider the central factorial numbers with even and odd indices separately, which is also justified by Proposition \ref{cfnprop} $\mathit{(vi)}$.
These are defined as
\begin{align}
	u(n,k)&:=t(2n,2k)\label{cfneven}\\
	v(n,k)&:=4^{n-k}t(2n-1,2k-1)\label{cfnodd}
\end{align}
Both of these integer sequences are present in the On-line Encyclopedia of Integer Sequences (OEIS) under \href{https://oeis.org/A008955}{A008955} for $u(n,k)$ and
under \href{https://oeis.org/A008956}{A008956} for $v(n,k)$. In Table \ref{cfntable} their values for $n,k=0, \dots, 6$ are given. We remark that
$\text{sgn}(s(n,k))=\text{sgn}(v(n,k))=\text{sgn}(u(n,k))=(-1)^{n-k}$ for $n,k\geq 1$ which follows straight from their respective definitions.
\begin{table}[H]
	\caption{Central factorial numbers \\ (Riordan \cite[pp.217, Table 6.1]{riordan})}
	\scalebox{0.8}{
		$
		\begin{array}{|c|rrrrrrr|c|c|rrrrrrr|}
			\cmidrule{1-8} \cmidrule{10-17}
			\multicolumn{8}{|c|}{u(n,k)}&\multicolumn{1}{c}{}&\multicolumn{8}{|c|}{v(n,k)}\\
			\cmidrule{1-8} \cmidrule{10-17}
			k\backslash n&0&1&2&3&4&5&6&&k\backslash n&0&1&2&3&4&5&6  \\
			\cmidrule{1-8} \cmidrule{10-17}
			0&1&0&0&0&0&0&0&&0&1&0&0&0&0&0&0  \\
			1& &1&-1&4&-36&576&-14400&&1& &1&-1&9&-225&11025&-893025\\
			2&&&1&-5&49&-820&21076&&2&&&1&-10&259&-12916&1057221\\
			3&&&&1&-14&273&-7645&&3&&&&1&-35&1974&-172810\\
			4&&&&&1&-30&1023&&4&&&&&1&-84&8778\\
			5&&&&&&1&-55&&5&&&&&&1&-165\\
			6&&&&&&&1&&6&&&&&&&1\\
			\cmidrule{1-8} \cmidrule{10-17}
		\end{array}
		$
	}
	\label{cfntable}
\end{table}
\subsection{Central factorial numbers with even indices}\hfill\\

A detailed treatment of the central factorial numbers with even indices $u(n,k)$ may be found in Shiha \cite{factorialeven}, where a recurrence relation, generating function,
distribution and general formula are presented, together with several combinatorial identities and applications. For our purposes we shall only need the following lemma which
is also a direct consequence of definitions \eqref{cfngenerating} and \eqref{cfneven}.
\begin{lemma}[Shiha {{\cite[pp. 6]{factorialeven}}}]  For $n\in\N$, then 
	\begin{equation*}
		\sum_{k=1}^{n} u(n,k)x^k=\prod_{i=0}^{n-1}(x-i^2)
	\end{equation*}	
	for all $x\in\R$.
	\label{cfnevenlemma}
\end{lemma}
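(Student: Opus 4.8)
The plan is to prove the identity directly from the product definition \eqref{cfngenerating} of the even-degree central factorial polynomial, together with the vanishing of the odd-index central factorial numbers recorded in Proposition~\ref{cfnprop}~$\mathit{(vi)}$. First I would write out $x^{[2n]}$ explicitly from \eqref{cfngenerating}: besides the leading factor $x$, its factors are the $2n-1$ shifted terms $x+\ell$ where $\ell$ ranges over the integers from $-(n-1)$ to $n-1$. The crucial observation is that these shifts are symmetric about $0$, so the factors pair up as $(x+\ell)(x-\ell)=x^2-\ell^2$ for $\ell=1,\dots,n-1$, while the two central factors (the leading $x$ and the $\ell=0$ term) combine to give $x^2$. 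Hence
\[
x^{[2n]} = x^2\prod_{\ell=1}^{n-1}\left(x^2-\ell^2\right) = \prod_{\ell=0}^{n-1}\left(x^2-\ell^2\right).
\]

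On the other hand, expanding $x^{[2n]}$ through the coefficient definition in \eqref{cfngenerating} and invoking Proposition~\ref{cfnprop}~$\mathit{(vi)}$ to discard all odd-index terms gives $x^{[2n]}=\sum_{k=0}^{n}t(2n,2k)\,x^{2k}=\sum_{k=0}^{n}u(n,k)\,x^{2k}$, where the last equality is just the definition \eqref{cfneven}. Equating the two expressions for $x^{[2n]}$ and substituting $y=x^2$ yields $\sum_{k=0}^{n}u(n,k)\,y^k=\prod_{\ell=0}^{n-1}(y-\ell^2)$. Since $u(n,0)=t(2n,0)=0$ for $n\ge 1$ by Proposition~\ref{cfnprop}~$\mathit{(v)}$, and since the $\ell=0$ factor on the right makes the constant term vanish as well, the $k=0$ term may be dropped from the sum, giving precisely the claimed identity after renaming $y$ back to $x$.

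There is no real obstacle here; the only point requiring care is bookkeeping the index ranges so that the symmetric pairing of the shifts is correctly identified, and checking the degenerate case $n=1$ (where both sides reduce to $x$). Alternatively, one could argue by induction on $n$ using the recursion $u(n,k)=u(n-1,k-1)-(n-1)^2u(n-1,k)$, which follows from Proposition~\ref{cfnprop}~$\mathit{(i)}$ together with \eqref{cfneven}: the inductive step collapses the two resulting sums, via the boundary vanishing in Proposition~\ref{cfnprop}~$\mathit{(iv)}$ and~$\mathit{(v)}$, into $(x-(n-1)^2)\prod_{i=0}^{n-2}(x-i^2)=\prod_{i=0}^{n-1}(x-i^2)$. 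I would favour the direct factorization argument, as it is shorter and makes transparent why the even-degree central factorial is essentially a polynomial in $x^2$.
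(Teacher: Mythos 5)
Your factorization argument is correct and is exactly the ``direct consequence of definitions \eqref{cfngenerating} and \eqref{cfneven}'' that the paper itself invokes (citing Shiha) without writing out the details: pairing the symmetric shifts in $x^{[2n]}$ to get $\prod_{i=0}^{n-1}(x^2-i^2)$, discarding the odd-index coefficients via Proposition~\ref{cfnprop}~$\mathit{(vi)}$, and substituting $y=x^2$. The bookkeeping of the constant term and the $n=1$ case is handled correctly, so nothing is missing.
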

\noindent Consider now the following associated sequence of numbers
\begin{equation}
	\bar{u}(n,k):=\frac{2}{(2n)!}u(n,k)\label{cfnevennorm}
\end{equation}
These are key to understanding the functional determinant of the Laplacian on odd-dimensional spheres and, in particular,
they are intrinsic to the associated spectral zeta function as we shall see in Theorem \ref{zetarecsol}. A few of their properties are
summarized in the following proposition and shall be used several times.
\begin{proposition} Let $n\in\N$. The following identities hold true:
	
	$ \arraycolsep=3pt\def\arraystretch{1.8}
	\begin{array}{cl l}
		\mathit{(i)} & u(n,k)=u(n-1,k-1)-\;\left(n-1\right)^2u(n-1,k) &  2\leq k \leq n \\
		\mathit{(ii)} & \bar{u}(n,k)=\dfrac{\bar{u}(n-1,k-1)-(n-1)^2\bar{u}(n-1,k)}{2n(2n-1)} & 1\leq k \leq n \\
		\mathit{(iii)} & \sum\limits_{k=1}^{n}\bar{u}(n,k)j^{2k}=0 & 0\leq j<n \\
		\mathit{(iv)} & \sum\limits_{k=1}^{n}\bar{u}(n,k)n^{2k}=1 \\
		\mathit{(v)} & \sum\limits_{i=k}^{n}\binom{2i}{2k}n^{2i-2k}\bar{u}(n,i)=\dfrac{s(2n,2k)+s(2n+1,2k+1)}{(2n)!} & 1\leq k \leq n 
	\end{array}
	$
	\label{cfnevenprop}
\end{proposition}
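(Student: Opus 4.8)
The plan is to dispatch the first four identities quickly, since they follow directly from results already available, and then to concentrate on $\mathit{(v)}$, which is the substantive claim. For $\mathit{(i)}$ I would simply specialise the central-factorial recurrence of Proposition~\ref{cfnprop}~$\mathit{(i)}$ to even indices: replacing $(n,k)$ by $(2n,2k)$ gives $t(2n,2k)=t(2n-2,2k-2)-(n-1)^2 t(2n-2,2k)$, which is exactly $\mathit{(i)}$ under the identification $u(n,k)=t(2n,2k)$ of~\eqref{cfneven}. Identity $\mathit{(ii)}$ is then obtained by multiplying $\mathit{(i)}$ through by $\tfrac{2}{(2n)!}$ and absorbing the factorial $(2n)!=2n(2n-1)(2n-2)!$ into the denominator via the definition~\eqref{cfnevennorm}; the boundary value $k=1$ is covered by the convention $u(m,0)=t(2m,0)=0$ coming from Proposition~\ref{cfnprop}~$\mathit{(v)}$.

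For $\mathit{(iii)}$ and $\mathit{(iv)}$ I would start from Lemma~\ref{cfnevenlemma}, multiply by $\tfrac{2}{(2n)!}$ to obtain
\[
	\sum_{k=1}^n \bar{u}(n,k)\,x^{k}=\frac{2}{(2n)!}\prod_{i=0}^{n-1}(x-i^2),
\]
and then substitute $x=j^2$. When $0\leq j<n$ the factor indexed by $i=j$ vanishes, giving $\mathit{(iii)}$; when $x=n^2$ the product factors as $\prod_{i=0}^{n-1}(n-i)(n+i)=n!\cdot\tfrac{(2n-1)!}{(n-1)!}=n\,(2n-1)!=\tfrac12(2n)!$, which cancels the prefactor and yields $\mathit{(iv)}$.

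The heart of the matter is $\mathit{(v)}$, and here my plan is to read the left-hand sum as a single coefficient extraction. Since $t(2n,\text{odd})=0$ by Proposition~\ref{cfnprop}~$\mathit{(vi)}$, the generating identity~\eqref{cfngenerating} reads $y^{[2n]}=\sum_{i} t(2n,2i)\,y^{2i}$, so expanding $(x+n)^{2i}$ binomially shows that $\sum_{i=k}^{n}\binom{2i}{2k}n^{2i-2k}t(2n,2i)$ is precisely the coefficient of $x^{2k}$ in the shifted polynomial $(x+n)^{[2n]}$. I would then evaluate this polynomial explicitly: substituting $y=x+n$ into the factored form $y^{[2n]}=\prod_{i=0}^{n-1}(y-i)(y+i)$ coming from Lemma~\ref{cfnevenlemma} collapses the product to $(x+n)^{[2n]}=(x+n)\prod_{j=1}^{2n-1}(x+j)$. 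Recognising $\prod_{j=1}^{2n-1}(x+j)$ as $x(x+1)\cdots(x+2n-1)$ divided by $x$, a rising factorial whose coefficients are the unsigned Stirling numbers $(-1)^{m-\ell}s(m,\ell)$, the coefficient of $x^{2k}$ in $(x+n)^{[2n]}=x\prod_{j=1}^{2n-1}(x+j)+n\prod_{j=1}^{2n-1}(x+j)$ works out to $s(2n,2k)-n\,s(2n,2k+1)$.

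The final step, and the one place where the precise form of the right-hand side is forced, is to invoke the Stirling recurrence of Proposition~\ref{cfnprop}~$\mathit{(ii)}$ in the form $s(2n+1,2k+1)=s(2n,2k)-2n\,s(2n,2k+1)$, which rewrites $s(2n,2k)-n\,s(2n,2k+1)=\tfrac12\big(s(2n,2k)+s(2n+1,2k+1)\big)$. Multiplying the coefficient identity by $\tfrac{2}{(2n)!}$ then produces $\mathit{(v)}$ exactly. I expect the main obstacle to be the bookkeeping in the coefficient extraction of the previous paragraph: keeping track of which Stirling index a given power of $x$ selects, and matching signs via $\mathrm{sgn}\,s(m,\ell)=(-1)^{m-\ell}$, is where errors are most likely to creep in, whereas the algebraic collapse of the product and the closing recurrence are comparatively routine.
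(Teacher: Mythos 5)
Your proof is correct. Parts $\mathit{(i)}$--$\mathit{(iv)}$ proceed exactly as in the paper: $\mathit{(i)}$ and $\mathit{(ii)}$ by specialising Proposition~\ref{cfnprop}~$\mathit{(i)}$ and normalising, and $\mathit{(iii)}$, $\mathit{(iv)}$ by evaluating Lemma~\ref{cfnevenlemma} at $x=j^2$ and $x=n^2$ (your factorisation $\prod_{i=0}^{n-1}(n^2-i^2)=\tfrac12(2n)!$ is the same computation the paper leaves implicit).

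For $\mathit{(v)}$ you take a genuinely different route to the same intermediate expression. The paper works termwise: it converts the sum over even indices into a sum over all indices via Proposition~\ref{cfnprop}~$\mathit{(vi)}$, splits $\binom{i}{2k}=\binom{i-1}{2k-1}+\binom{i-1}{2k}$ by Pascal's rule, and then recognises the two resulting sums as $s(2n,2k)$ and $-n\,s(2n,2k+1)$ by citing the explicit conversion formula of Proposition~\ref{cfnprop}~$\mathit{(vii)}$. You instead interpret the whole sum as the coefficient of $x^{2k}$ in the shifted central factorial polynomial $(x+n)^{[2n]}$, factor it explicitly as $(x+n)\prod_{j=1}^{2n-1}(x+j)$ using the product form from Lemma~\ref{cfnevenlemma}, and read off $s(2n,2k)-n\,s(2n,2k+1)$ from the rising-factorial expansion. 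This is essentially a self-contained re-derivation of the content of Proposition~\ref{cfnprop}~$\mathit{(vii)}$ in the even case, so your argument is more transparent about \emph{why} the Stirling numbers appear, at the cost of the sign bookkeeping you flag (which you handle correctly: the coefficient of $x^{2k}$ in $x\prod_j(x+j)$ is $(-1)^{2k}s(2n,2k)=s(2n,2k)$ and in $n\prod_j(x+j)$ is $-n\,s(2n,2k+1)$). Both proofs then close with the identical application of the Stirling recurrence $s(2n+1,2k+1)=s(2n,2k)-2n\,s(2n,2k+1)$ and the normalisation by $\tfrac{2}{(2n)!}$.
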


\begin{proof}\hfill\\
	$\mathit{(i)}$ Follows from the definition of $u(n,i)$ in \eqref{cfneven} applied to Proposition \ref{cfnprop} \textit{(i)}.\\
	$\mathit{(ii)}$ Follows from $\mathit{(i)}$ and the definition of $\bar{u}(n,i)$ in \eqref{cfnevennorm}.\\	
	$\mathit{(iii)}$ Considering Lemma \ref{cfnevenlemma} with  $x=j^2$ for $0\leq j<n$  and diving both sides by $\tfrac{(2n)!}{2}$ we obtain
	\begin{equation*}
		\sum_{k=1}^{n}\bar{u}(n,k)j^{2k}=\frac{2}{(2n)!}\prod_{i=0}^{n-1}(j^2-i^2),
	\end{equation*}
	and since $1\leq i, j < n$, the RHS of the previous equation is $0$. \newline
	$\mathit{(iv)}$ Similarly, considering Lemma \ref{cfnevenlemma} with  $x=n^2$ and diving both sides by $\tfrac{(2n)!}{2}$ we obtain
	\begin{equation*}
		\sum_{k=1}^{n}\bar{u}(n,k)n^{2k}=\frac{2}{(2n)!}\prod_{i=0}^{n-1}(n^2-i^2)=1.
	\end{equation*}
	$\mathit{(v)}$ Consider the following
	\begin{align*}
		\sum_{i=k}^{n}\binom{2i}{2k}n^{2i-2k}\bar{u}(n,i) =& 	\frac{2}{(2n)!}\sum_{i=k}^{n}\binom{2i}{2k}n^{2i-2k}t(2n,2i)&&\eqskip
		=& \frac{2}{(2n)!}\sum_{i=2k}^{2n}\binom{i}{2k}n^{i-2k}t(2n,i) && \small\text{Proposition \ref{cfnprop} $\mathit{(vi)}$}\eqskip
		=&\frac{2}{(2n)!}\sum_{i=2k}^{2n}\left[\binom{i-1}{2k-1}+\binom{i-1}{2k}\right]n^{i-2k}t(2n,i)&&\eqskip
		=&\frac{2}{(2n)!} [s(2n,2k) -n \,s(2n,2k+1)] && \small\text{Proposition \ref{cfnprop} $\mathit{(vii)}$}\eqskip
		=&\frac{1}{(2n)!} [s(2n,2k) + s(2n+1,2k+1)] && \small\text{Proposition \ref{cfnprop} $\mathit{(ii)}$}
	\end{align*}
\end{proof}

\subsection{Central factorial numbers with odd indices}\hfill\\

A comprehensive treatment of the central factorial numbers with odd indices $v(n,k)$ may be found in~\cite{factorialodd} and is very similar to the treatment
given for $u(n,k)$ with the exception of a general formula. One notes the unsurprising similarities in what follows. The following lemma is a direct consequence of
definitions \eqref{cfngenerating} and \eqref{cfnodd}.
\begin{lemma} [Zaid, Shiha and El-Desouki {{\cite[pp. 62]{factorialodd}}}]  For $n\in\N$, then 
	\begin{equation*}
		\sum_{k=1}^{n} v(n,k)x^k=x\prod_{i=1}^{n-1}\big(x-(2i-1)^2\big)
	\end{equation*}	
	for all $x\in\R$.
	\label{cfnoddlemma}
\end{lemma}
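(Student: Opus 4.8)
The plan is to mirror the derivation of Lemma~\ref{cfnevenlemma} for the even-index case, since the statement is asserted to be a direct consequence of the definitions \eqref{cfngenerating} and \eqref{cfnodd}. First I would expand the central factorial polynomial of odd degree directly from its defining product \eqref{cfngenerating}. Writing out $x^{[2n-1]}$ and pairing the factor indexed by $j$ with the factor indexed by $2n-1-j$ (which sit symmetrically about the central factor $x$), each such pair collapses to $x^2-\left(\tfrac{2n-1}{2}-j\right)^2$, so that after reindexing by $i=n-j$ one obtains
\[
x^{[2n-1]}=x\prod_{i=1}^{n-1}\left(x^2-\left(\tfrac{2i-1}{2}\right)^2\right).
\]

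Next I would compare this factored form with the coefficient expansion. By Proposition~\ref{cfnprop}~$\mathit{(vi)}$ the even-index coefficients $t(2n-1,2k)$ vanish, so only the odd powers of $x$ survive and $x^{[2n-1]}=\sum_{k=1}^{n}t(2n-1,2k-1)\,x^{2k-1}$. Substituting the definition \eqref{cfnodd}, namely $t(2n-1,2k-1)=v(n,k)/4^{\,n-k}$, turns this into $\sum_{k=1}^{n}\tfrac{v(n,k)}{4^{\,n-k}}x^{2k-1}$, which I would then equate with the factored product above.

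The final step is a change of variable that absorbs the normalisation $4^{\,n-k}$. After dividing the resulting polynomial identity by $x$ (legitimate since both sides are polynomials agreeing for all $x\ne 0$, hence everywhere), I would set $y=4x^2$, so that $x^{2k-2}=y^{k-1}/4^{\,k-1}$ on the left while each factor $x^2-\left(\tfrac{2i-1}{2}\right)^2$ becomes $\tfrac14\bigl(y-(2i-1)^2\bigr)$ on the right. The common factor $4^{-(n-1)}$ then appears on both sides and cancels, yielding $\sum_{k=1}^{n}v(n,k)\,y^{k-1}=\prod_{i=1}^{n-1}\bigl(y-(2i-1)^2\bigr)$; multiplying through by $y$ gives the claimed identity.

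I do not expect any genuine obstacle here: the only point requiring care is the bookkeeping of the powers of $4$, and the precise reason the argument works is that the substitution $y=(2x)^2$ is exactly matched to the factor $4^{\,n-k}$ built into the definition of $v(n,k)$ in \eqref{cfnodd}. This is in contrast to the even case, where $u(n,k)=t(2n,2k)$ carries no such normalisation and the plain substitution $y=x^2$ suffices.
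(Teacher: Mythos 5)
Your proof is correct and is exactly the computation the paper has in mind: the paper gives no written proof, merely asserting that the lemma ``is a direct consequence of definitions \eqref{cfngenerating} and \eqref{cfnodd}'' and citing Zaid, Shiha and El-Desouki, and your pairing of symmetric factors, use of the vanishing of $t(2n-1,2k)$, and the substitution $y=(2x)^2$ absorbing the $4^{\,n-k}$ normalisation is precisely that direct consequence worked out. The only (trivial) point left implicit is that the substitution only produces the identity for $y\ge 0$, which suffices since a polynomial identity holding on an infinite set holds identically.
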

\noindent Consider now the following associated sequence of numbers
\begin{equation}
	\bar{v}(n,k):= \frac{4^{-(n-1)}}{(2n-1)!}v(n,k)\label{cfnoddnorm}
\end{equation}
As before, these are key to understanding the functional determinant of the Laplacian on even-dimensional spheres and, in particular, they are intrinsic to the associated
spectral zeta function as we shall see in Theorem \ref{zetarecsol}. A few of their properties are presented below in the following proposition which is the counterpart
of Proposition~\ref{cfnevenprop} for even indices, and will also be used throughout the paper.
\begin{proposition} Let $n\in\N$. The following identities hold true.
	
	$ \arraycolsep=3pt\def\arraystretch{2.1}
	\begin{array}{cl l}
		\mathit{(i)} &v(n,k)=v(n-1,k-1)-\;\left(2n-3\right)^2v(n-1,k) & 2\leq k \leq n \\
		\mathit{(ii)} & \bar{v}(n,k)=\dfrac{\bar{v}(n-1,k-1)-(2n-3)^2\bar{v}(n-1,k)}{4(2n-2)(2n-1)} & 1\leq k \leq n \\
		\mathit{(iii)} & \sum\limits_{k=1}^{n}\bar{v}(n,k)(2j-1)^{2k-1}=0 & 0\leq j<n \\
		\mathit{(iv)} & \sum\limits_{k=1}^{n}\bar{v}(n,k)(2n-1)^{2k-1}=1 & \\
		\mathit{(v)} & \sum\limits_{i=k}^{n}\binom{2i-1}{2k-1}(2n-1)^{2i-2k}\bar{v}(n,i)=(s(2n,2k)+s(2n-1,2k-1))\dfrac{2^{-2k+1}}{(2n-1)!} & 1\leq k \leq n \\
	\end{array}
	$
	\label{cfnoddprop}
\end{proposition}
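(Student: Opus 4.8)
The plan is to follow the blueprint of Proposition~\ref{cfnevenprop}, replacing the even-index objects $u(n,k)$, $\bar u(n,k)$ and Lemma~\ref{cfnevenlemma} by their odd-index counterparts $v(n,k)$, $\bar v(n,k)$ and Lemma~\ref{cfnoddlemma}, while keeping careful track of the index-dependent powers of $4$ that the definition~\eqref{cfnodd} introduces. For $\mathit{(i)}$ I would substitute $v(n,k)=4^{n-k}t(2n-1,2k-1)$ from~\eqref{cfnodd} and apply the central-factorial recursion in Proposition~\ref{cfnprop}~$\mathit{(i)}$ with indices $2n-1$ and $2k-1$, noting that $\left(\tfrac{(2n-1)-2}{2}\right)^2=\left(\tfrac{2n-3}{2}\right)^2$ and that the factor $\tfrac14$ it produces is exactly absorbed when reindexing the powers of $4$, so that $v(n,k)=v(n-1,k-1)-(2n-3)^2 v(n-1,k)$. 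Part $\mathit{(ii)}$ then follows by dividing $\mathit{(i)}$ by $4^{\,n-1}(2n-1)!$ and rewriting $v(n-1,\cdot)$ through~\eqref{cfnoddnorm}; the constant that appears is $\tfrac{1}{4(2n-2)(2n-1)}$.

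For $\mathit{(iii)}$ and $\mathit{(iv)}$ I would start from Lemma~\ref{cfnoddlemma}, substitute $x=(2j-1)^2$, and divide both sides by $(2j-1)$ to match the odd power $(2j-1)^{2k-1}$ on the left, obtaining $\sum_{k=1}^n v(n,k)(2j-1)^{2k-1}=(2j-1)\prod_{i=1}^{n-1}\big((2j-1)^2-(2i-1)^2\big)$. For $\mathit{(iii)}$ the product vanishes because $(2j-1)^2$ coincides with the root $(2i-1)^2$ for $i=j$ (and with the $i=1$ factor when $j=0$), so the whole expression is $0$; normalising by $4^{\,n-1}(2n-1)!$ gives the claim. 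For $\mathit{(iv)}$ I would take $j=n$ and evaluate the product explicitly via the factorisation $(2n-1)^2-(2i-1)^2=4(n-i)(n+i-1)$, giving $4^{\,n-1}(n-1)!\,\tfrac{(2n-2)!}{(n-1)!}=4^{\,n-1}(2n-2)!$; multiplying by $(2n-1)$ and normalising produces exactly $1$.

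The main work, and the step I expect to require the most care, is $\mathit{(v)}$. Here I would first rewrite the index-dependent weight using $4^{\,n-i}(2n-1)^{2i-2k}=4^{\,n-k}\left(\tfrac{2n-1}{2}\right)^{2i-2k}$, which is the key move that makes the half-argument $\tfrac{2n-1}{2}$ appear, precisely the quantity $\tfrac{m}{2}$ with $m=2n-1$ needed in the central-factorial-to-Stirling formula Proposition~\ref{cfnprop}~$\mathit{(vii)}$. After pulling out $\tfrac{4^{\,1-k}}{(2n-1)!}$, I would reindex by $\ell=2i-1$, extend the sum over all integers using Proposition~\ref{cfnprop}~$\mathit{(vi)}$ (the even indices contribute nothing), and split $\binom{\ell}{2k-1}$ by Pascal's rule into $\binom{\ell-1}{2k-2}+\binom{\ell-1}{2k-1}$. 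Proposition~\ref{cfnprop}~$\mathit{(vii)}$ then identifies the two resulting sums as $s(2n-1,2k-1)$ and $-\tfrac{2n-1}{2}\,s(2n-1,2k)$, so the bracket equals $s(2n-1,2k-1)-\tfrac{2n-1}{2}s(2n-1,2k)$.

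The final obstacle is to recognise that this last expression equals $\tfrac12\big(s(2n,2k)+s(2n-1,2k-1)\big)$, which I would verify from the Stirling recursion Proposition~\ref{cfnprop}~$\mathit{(ii)}$ in the form $s(2n,2k)=s(2n-1,2k-1)-(2n-1)s(2n-1,2k)$. Tracking the leftover constant, $\tfrac{4^{\,1-k}}{2}=2^{-2k+1}$, then yields the stated right-hand side. The delicate point throughout is the sign bookkeeping in the two applications of $\mathit{(vii)}$, where $(-1)^{(2n-1)+(2k-1)}=1$ but $(-1)^{(2n-1)+2k}=-1$; getting these signs right is what makes the combination collapse correctly.
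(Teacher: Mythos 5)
Your proposal is correct and follows essentially the same route as the paper's proof: parts $\mathit{(i)}$--$\mathit{(iv)}$ via the definitions \eqref{cfnodd}, \eqref{cfnoddnorm} and Lemma~\ref{cfnoddlemma}, and part $\mathit{(v)}$ via the same rewriting with the half-argument $\tfrac{2n-1}{2}$, Pascal's rule, Proposition~\ref{cfnprop}~$\mathit{(vii)}$, and the Stirling recursion $\mathit{(ii)}$ to collapse $s(2n-1,2k-1)-\tfrac{2n-1}{2}s(2n-1,2k)$ into $\tfrac12\big(s(2n,2k)+s(2n-1,2k-1)\big)$.
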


\begin{proof}\hfill\\
	$\mathit{(i)}$ Follows from the definition of $v(n,i)$ in \eqref{cfnodd} applied to Proposition \ref{cfnprop} $\mathit{(i)}$.\\
	$\mathit{(ii)}$ Follows from $\mathit{(i)}$ and the definition of $\bar{v}(n,i)$ in \eqref{cfnoddnorm}.\\
	$\mathit{(iii)}$ Considering Lemma \ref{cfnoddlemma} with  $x=(2j-1)^2$ for $0\leq j<n$ and multiplying both sides by $\tfrac{4^{-(n-1)}}{(2n-1)!(2j-1)}$ we obtain
	\begin{equation*}
		\sum_{k=1}^{n}\bar{v}(n,k)(2j-1)^{2k-1}=\dfrac{4^{-(n-1)}(2j-1)}{(2n-1)!}\prod_{i=1}^{n-1}\big((2j-1)^2-(2i-1)^2\big) 
	\end{equation*}
	and since $1\leq i, j < n$, the RHS of the previous equation is $0$. \newline
	$\mathit{(iv)}$ Similarly, considering Lemma \ref{cfnoddlemma} with  $x=(2n-1)^2$ and multiplying both sides by $\frac{4^{-(n-1)}}{(2n-1)!(2n-1)}$ we obtain
	\begin{equation*}
		\sum_{k=1}^{n}\bar{v}(n,k)(2n-1)^{2k-1}=\frac{4^{-(n-1)}}{(2n-2)!}\prod_{i=1}^{n-1}\big((2n-1)^2-(2i-1)^2\big)=1
	\end{equation*}
	$\mathit{(v)}$ In a similar manner to the proof of Proposition \ref{cfnevenprop} $\mathit{(v)}$, making use of the identities in Proposition \ref{cfnprop} consider the following
	\[
	\begin{array}{lll}
		\dsum_{i=k}^{n}\binom{2i-1}{2k-1}(2n-1)^{2i-2k}\bar{v}(n,i) & = & \fr{2^{2-2k}}{(2n-1)!}\dsum_{i=k}^{n}\binom{2i-1}{2k-1}\left(\frac{2n-1}{2}\right)^{2i-2k}t(2n-1,2i-1)\eqskip
		& = &\fr{2^{2-2k}}{(2n-1)!}\dsum_{i=2k}^{2n}\binom{i}{2k-1}\left(\frac{2n-1}{2}\right)^{i-2k+1}t(2n-1,i)\eqskip
		& = &\fr{2^{2-2k}}{(2n-1)!}\dsum_{i=2k-1}^{2n}\left(\binom{i-1}{2k-1}+\binom{i-1}{2k-2}\right)\eqskip
		& &\hspace*{10mm}\times\left(\fr{2n-1}{2}\right)^{i-2k+1}t(2n-1,i)\eqskip
		& = &\fr{2^{2-2k}}{(2n-1)!} \left[s(2n-1,2k-1) -\left(\fr{2n-1}{2}\right)s(2n-1,2k)\right] \eqskip
		& = &\fr{2^{1-2k}}{(2n-1)!} \big(s(2n-1,2k-1) + s(2n,2k)\big) 
	\end{array}
	\]
\end{proof}
From Proposition \ref{cfnoddprop} we are able to simplify some expressions which are presented below and will prove useful in this essay. They are quite
simple but we provide a proof for clarity.
\begin{proposition} Let $n\in\N$. The following identities hold true.
	\begin{gather*} \arraycolsep=3pt\def\arraystretch{2.2}
	\begin{array}{cl}
		\mathit{(i)}&\sum\limits_{i=1}^{n}\bar{v}(n,i)\left(\sum\limits_{j=1}^{2n-1}\log\left(\frac{2j-1}{2}\right)(2n-2j)^{2i-1}-\sum\limits_{j=1}^{4n-2}\log(j)(2n-j-1)^{2i-1}\right)=\log(4n-2)\\
		\mathit{(ii)}&\sum\limits_{i=1}^{n}\bar{v}(n,i)\left(\sum\limits_{j=1}^{n-1}\log(j)(2j)^{2i-1}-\sum\limits_{j=1}^{2n-1}\log(j)j^{2i-1}\right)=-\log(2)\sum\limits_{i=1}^{n}\sum\limits_{j=1}^{n-1}\bar{v}(n,i)(2j)^{2i-1}-\log(2n-1) \\
	\end{array}
	\end{gather*}
	\label{logevenprop}
\end{proposition}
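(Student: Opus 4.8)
The plan is to interchange the order of summation in both identities so that the inner object becomes the single odd polynomial $P(m):=\sum_{i=1}^{n}\bar v(n,i)\,m^{2i-1}$, and then to exploit the vanishing and normalization properties of $P$ recorded in Proposition~\ref{cfnoddprop}. Concretely, parts $\mathit{(iii)}$ and $\mathit{(iv)}$ say that $P$ vanishes at $m=0,\pm1,\pm3,\dots,\pm(2n-3)$ (the negative values being handled by the fact that $P$ is odd, as it involves only odd powers of $m$) and that $P(2n-1)=1$, whence $P(-(2n-1))=-1$. These are the only facts about the $\bar v(n,i)$ that are needed.

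For identity $\mathit{(i)}$, I would first rewrite the left-hand side as $\sum_{j=1}^{2n-1}\log\!\big(\tfrac{2j-1}{2}\big)P(2n-2j)-\sum_{j=1}^{4n-2}\log(j)\,P(2n-j-1)$, and then substitute $m=2n-2j$ in the first sum and $m=2n-j-1$ in the second. The first sum then runs over the even integers $m$ with $|m|\le 2n-2$ and weight $\log\!\big(\tfrac{2n-m-1}{2}\big)$, while the second runs over all integers $m\in[-(2n-1),2n-2]$ with weight $\log(2n-m-1)$. In the second sum every odd $m$ with $|m|\le 2n-3$ drops out because $P(m)=0$; the only surviving odd index is $m=-(2n-1)$, which contributes $\log(4n-2)\cdot P(-(2n-1))=-\log(4n-2)$. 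Subtracting the two sums, the even-$m$ terms combine through $\log\!\big(\tfrac{2n-m-1}{2}\big)-\log(2n-m-1)=-\log 2$, leaving $\log(4n-2)-\log 2\sum_{m}P(m)$, the sum being over the symmetric even range $|m|\le 2n-2$. Since $P$ is odd and $P(0)=0$, this last sum vanishes, giving the claimed value $\log(4n-2)$.

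Identity $\mathit{(ii)}$ is handled by the same mechanism but is simpler. Writing the left-hand side as $\sum_{j=1}^{n-1}\log(j)P(2j)-\sum_{j=1}^{2n-1}\log(j)P(j)$, I would split the second sum according to the parity of $j$: all odd $j\le 2n-3$ contribute nothing since $P(j)=0$, the term $j=2n-1$ contributes $\log(2n-1)$ by $P(2n-1)=1$, and the even terms $j=2\ell$ with $1\le\ell\le n-1$ give $\sum_{\ell}\log(2\ell)P(2\ell)$. Combining this with the first sum through $\log(j)-\log(2j)=-\log 2$ yields exactly $-\log 2\sum_{j=1}^{n-1}P(2j)-\log(2n-1)$, which upon expanding $P(2j)=\sum_i\bar v(n,i)(2j)^{2i-1}$ is the right-hand side.

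I do not anticipate any serious difficulty: the only genuine work is the bookkeeping in the reindexing of part $\mathit{(i)}$, namely verifying that after the substitutions the two sums share the same symmetric even range of $m$ and correctly isolating the single odd value $m=-(2n-1)$ that escapes the vanishing in Proposition~\ref{cfnoddprop}~$\mathit{(iii)}$. Once that is in place, the cancellations are forced by the oddness of $P$ and by the elementary logarithm identities above.
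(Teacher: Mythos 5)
Your argument is correct and follows essentially the same route as the paper's proof: both split the sums by parity of the argument, reduce everything to the odd polynomial $P(m)=\sum_{i=1}^{n}\bar v(n,i)\,m^{2i-1}$, and invoke Proposition~\ref{cfnoddprop} $\mathit{(iii)}$--$\mathit{(iv)}$ together with the antisymmetry of odd powers to kill the symmetric sums and isolate the single surviving term at $\pm(2n-1)$. The only difference is organizational --- you interchange the order of summation at the outset and carry $P$ throughout, whereas the paper manipulates the inner sums for fixed $i$ and applies $\sum_{i}\bar v(n,i)$ only at the end --- which does not change the substance.
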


\begin{proof}
	$\mathit{(i)}$ Note that 
	\begin{align*}
		\sum\limits_{j=1}^{2n-1}&\log\left(\frac{2j-1}{2}\right)(2n-2j)^{2i-1}-\sum\limits_{j=1}^{4n-2}\log(j)(2n-j-1)^{2i-1}=\\&=-\underbrace{\sum\limits_{j=1}^{2n-1}\log(2)(2n-2j)^{2i-1}}_{=0}-\sum\limits_{j=1}^{2n-1}\log(2j)(2n-2j-1)^{2i-1}\\
		&=\log(4n)+\sum\limits_{j=1}^{n-1}\log(2n+2j)(2j+1)^{2i-1}-\sum\limits_{j=1}^{n-1}\log(2n-2j)(2j-1)^{2i-1}\\
		&=\log(4n-2)(2n-1)^{2i-1}-\sum\limits_{j=1}^{n-1}\log\left(\frac{2n-2j}{2n+2j-2}\right)(2j-1)^{2i-1}
	\end{align*}
	where, in the first step, the second sum was separated in its even and odd parts. Now applying $\sum\limits_{i=1}^{n}\bar{v}(n,i)$ on both sides together with Proposition \ref{cfnoddprop} $\mathit{(iii)}$ and $\mathit{(iv)}$ yields the result.\\
	$\mathit{(ii)}$ Note that 
	\[
	\begin{array}{lll}
		\dsum\limits_{j=1}^{n-1}\log(j)(2j)^{2i-1}-\dsum\limits_{j=1}^{2n-1}\log(j)j^{2i-1} & = &-\dsum\limits_{j=1}^{n-1}\log(2)(2j)^{2i-1}-\dsum\limits_{j=1}^{n-1
		}\log(2j+1)(2j+1)^{2i-1}\eqskip	
		& = & -\dsum\limits_{j=1}^{n-1}\log(2)(2j)^{2i-1}-\dsum\limits_{j=1}^{n-2}\log(2j+1)(2j+1)^{2i-1} \eqskip
		& & \hspace*{5mm}- \log(2n-1)(2n-1)^{2i-1} 
	\end{array}
	\]
	and now applying $\sum\limits_{i=1}^{n}\bar{v}(n,i)$ on both sides together with Proposition \ref{cfnoddprop} $\mathit{(iii)}$ and $\mathit{(iv)}$ yields the result.
\end{proof}
\newpage
\section{Numerical values\label{numerical}}

\begin{table}[H]
	\caption{Numerical evaluation of the determinant of the Laplace operator on $\sn{n}$, $\snh{n}$ and $\snrp{n}$ for $n=1,2,\dots,100$.}
	\label{computation:det}
\scalebox{0.82}[0.75]{
\begin{tabular}{|CL|CL||CL|CL||CL|CL|}		
	\toprule[0.5pt]
	\multicolumn{4}{|C||}{ \det(\sn{n})}&\multicolumn{4}{C||}{\text{det}(\snh{n})}&\multicolumn{4}{C|}{\text{det}(\snrp{n})}\\ \midrule
	\midrule	1 & 39.47842  & 2 & 3.195311 & 1 & 6.283185 & 2 & 0.713127 & 1 & 9.869604 & 2 & 2.240353 \\
	3 & 3.338851 & 4 & 1.736943 & 3 & 1.328388 & 4 & 0.896916 & 3 & 2.004050 & 4 & 1.312925 \\
	5 & 1.762919 & 6 & 1.290018 & 5 & 1.126034 & 6 & 0.969512 & 5 & 1.226325 & 6 & 0.985390 \\
	7 & 1.222521 & 8 & 1.050409 & 7 & 1.045956 & 8 & 1.011931 & 7 & 0.909314 & 8 & 0.804377 \\
	9 & 0.946733 & 10 & 0.896183 & 9 & 1.001319 & 10 & 1.040842 & 9 & 0.732550 & 10 & 0.686519\\
	11 & 0.778048 & 12 & 0.786904 & 11 & 0.972171 & 12 & 1.062300 & 11 & 0.618221 & 12 & 0.602558 \\
	13 & 0.663546 & 14 & 0.704655 & 13 & 0.951307 & 14 & 1.079117 & 13 & 0.537518 & 14 & 0.539189 \\
	15 & 0.580375 & 16 & 0.640108 & 15 & 0.935449 & 16 & 1.092804 & 15 & 0.477157 & 16 & 0.489387 \\
	17 & 0.517020 & 18 & 0.587870 & 17 & 0.922876 & 18 & 1.104258 & 17 & 0.430114 & 18 & 0.449052 \\
	19 & 0.467028 & 20 & 0.544581 & 19 & 0.912592 & 20 & 1.114048 & 19 & 0.392302 & 20 & 0.415618 \\
	21 & 0.426495 & 22 & 0.508027 & 21 & 0.903976 & 22 & 1.122559 & 21 & 0.361172 & 22 & 0.387384 \\
	23 & 0.392915 & 24 & 0.476683 & 23 & 0.896618 & 24 & 1.130057 & 23 & 0.335047 & 24 & 0.363179 \\
	25 & 0.364604 & 26 & 0.449464 & 25 & 0.890236 & 26 & 1.136739 & 25 & 0.312775 & 26 & 0.342163 \\
	27 & 0.340384 & 28 & 0.425571 & 27 & 0.884630 & 28 & 1.142750 & 27 & 0.293538 & 28 & 0.323722 \\
	29 & 0.319410 & 30 & 0.404403 & 29 & 0.879653 & 30 & 1.148199 & 29 & 0.276737 & 30 & 0.307391 \\
	31 & 0.301055 & 32 & 0.385501 & 31 & 0.875192 & 32 & 1.153175 & 31 & 0.261923 & 32 & 0.292814 \\
	33 & 0.284847 & 34 & 0.368503 & 33 & 0.871164 & 34 & 1.157746 & 33 & 0.248752 & 34 & 0.279711 \\
	35 & 0.270420 & 36 & 0.353124 & 35 & 0.867501 & 36 & 1.161966 & 35 & 0.236958 & 36 & 0.267861 \\
	37 & 0.257491 & 38 & 0.339133 & 37 & 0.864149 & 38 & 1.165881 & 37 & 0.226330 & 38 & 0.257085 \\
	39 & 0.245830 & 40 & 0.326342 & 39 & 0.861067 & 40 & 1.169528 & 39 & 0.216696 & 40 & 0.247239 \\
	41 & 0.235257 & 42 & 0.314597 & 41 & 0.858219 & 42 & 1.172938 & 41 & 0.207921 & 42 & 0.238201 \\
	43 & 0.225623 & 44 & 0.303770 & 43 & 0.855575 & 44 & 1.176137 & 43 & 0.199889 & 44 & 0.229874 \\
	45 & 0.216803 & 46 & 0.293752 & 45 & 0.853113 & 46 & 1.179148 & 45 & 0.192509 & 46 & 0.222172 \\
	47 & 0.208698 & 48 & 0.284452 & 47 & 0.850811 & 48 & 1.181989 & 47 & 0.185701 & 48 & 0.215026 \\
	49 & 0.201221 & 50 & 0.275793 & 49 & 0.848653 & 50 & 1.184676 & 49 & 0.179400 & 50 & 0.208374 \\
	51 & 0.194300 & 52 & 0.267707 & 51 & 0.846623 & 52 & 1.187225 & 51 & 0.173549 & 52 & 0.202167 \\
	53 & 0.187874 & 54 & 0.260138 & 53 & 0.844708 & 54 & 1.189647 & 53 & 0.168100 & 54 & 0.196357 \\
	55 & 0.181891 & 56 & 0.253035 & 55 & 0.842899 & 56 & 1.191953 & 55 & 0.163012 & 56 & 0.190909 \\
	57 & 0.176305 & 58 & 0.246355 & 57 & 0.841185 & 58 & 1.194152 & 57 & 0.158249 & 58 & 0.185786 \\
	59 & 0.171076 & 60 & 0.240060 & 59 & 0.839558 & 60 & 1.196254 & 59 & 0.153780 & 60 & 0.180960 \\
	61 & 0.166171 & 62 & 0.234115 & 61 & 0.838011 & 62 & 1.198265 & 61 & 0.149578 & 62 & 0.176405 \\
	63 & 0.161561 & 64 & 0.228491 & 63 & 0.836536 & 64 & 1.200193 & 63 & 0.145618 & 64 & 0.172098 \\
	65 & 0.157217 & 66 & 0.223163 & 65 & 0.835129 & 66 & 1.202043 & 65 & 0.141881 & 66 & 0.168018 \\
	67 & 0.153118 & 68 & 0.218106 & 67 & 0.833784 & 68 & 1.203822 & 67 & 0.138346 & 68 & 0.164147 \\
	69 & 0.149243 & 70 & 0.213298 & 69 & 0.832496 & 70 & 1.205533 & 69 & 0.134999 & 70 & 0.160469 \\
	71 & 0.145573 & 72 & 0.208723 & 71 & 0.831261 & 72 & 1.207181 & 71 & 0.131822 & 72 & 0.156970 \\
	73 & 0.142093 & 74 & 0.204362 & 73 & 0.830077 & 74 & 1.208770 & 73 & 0.128805 & 74 & 0.153635 \\
	75 & 0.138786 & 76 & 0.200199 & 75 & 0.828938 & 76 & 1.210304 & 75 & 0.125934 & 76 & 0.150454 \\
	77 & 0.135642 & 78 & 0.196222 & 77 & 0.827842 & 78 & 1.211787 & 77 & 0.123198 & 78 & 0.147416 \\
	79 & 0.132647 & 80 & 0.192418 & 79 & 0.826787 & 80 & 1.213220 & 79 & 0.120589 & 80 & 0.144510 \\
	81 & 0.129791 & 82 & 0.188775 & 81 & 0.825770 & 82 & 1.214608 & 81 & 0.118097 & 82 & 0.141728 \\
	83 & 0.127064 & 84 & 0.185283 & 83 & 0.824788 & 84 & 1.215953 & 83 & 0.115714 & 84 & 0.139063 \\
	85 & 0.124457 & 86 & 0.181932 & 85 & 0.823840 & 86 & 1.217256 & 85 & 0.113434 & 86 & 0.136505 \\
	87 & 0.121963 & 88 & 0.178713 & 87 & 0.822923 & 88 & 1.218521 & 87 & 0.111249 & 88 & 0.134050 \\
	89 & 0.119575 & 90 & 0.175619 & 89 & 0.822035 & 90 & 1.219749 & 89 & 0.109153 & 90 & 0.131690 \\
	91 & 0.117284 & 92 & 0.172642 & 91 & 0.821176 & 92 & 1.220942 & 91 & 0.107142 & 92 & 0.129421 \\
	93 & 0.115086 & 94 & 0.169775 & 93 & 0.820343 & 94 & 1.222102 & 93 & 0.105209 & 94 & 0.127236 \\
	95 & 0.112975 & 96 & 0.167013 & 95 & 0.819536 & 96 & 1.223231 & 95 & 0.103350 & 96 & 0.125131 \\
	97 & 0.110945 & 98 & 0.164348 & 97 & 0.818752 & 98 & 1.224330 & 97 & 0.101562 & 98 & 0.123101 \\
	99 & 0.108992 & 100 & 0.161777 & 99 & 0.817991 & 100 & 1.225400 & 99 & 0.0998387 & 100 & 0.121143 \\
	\bottomrule[0.5pt]	
\end{tabular}
}
\end{table}
\newpage

\section*{Acknowledgements} 
The second author was partially supported by the Funda\c c\~{a}o para a Ci\^{e}ncia e a Tecnologia (Portugal)
through project UIDB/00208/2020.

\end{document}